\documentclass[a4paper,12pt,reqno,oneside]{amsart}
\usepackage{setspace} \setstretch{1.1}
\usepackage{typearea} 

\usepackage{amscd,amsmath,amssymb,verbatim}
\usepackage{xr-hyper}
\usepackage{graphicx}
\usepackage{ifthen}
\usepackage{paralist,cite}
\usepackage[colorlinks,backref,final,bookmarksnumbered,bookmarks]{hyperref}
\usepackage[backrefs]{amsrefs}
\usepackage{nicematrix}
\usepackage{tikz-cd}
\usepackage{accents}
\usepackage{enumitem}

\externaldocument[metr:]{metrizable}
\externaldocument[poly:]{polyhedra}
\externaldocument[fish:]{fineshape}
\externaldocument[axi:]{axioms}
\externaldocument[book:]{textbook}

\usepackage[utf8]{inputenc}
\usepackage[T1,T2A]{fontenc}

\newcommand{\arxiv}[2][]{\ifthenelse{\equal{#1}{}}
{\href{http://arxiv.org/abs/#2}{\tt arXiv:#2}}
{\href{http://arxiv.org/abs/math/#2}{\tt arXiv:math.#1/#2}}}

\theoremstyle{plain}
\newtheorem{maintheorem}{Theorem}
 
\newtheorem{theorem}{Theorem}[section]
\newtheorem{lemma}[theorem]{Lemma}
\newtheorem{corollary}[theorem]{Corollary}
\newtheorem*{corollary*}{Corollary}
\newtheorem{proposition}[theorem]{Proposition}

\theoremstyle{definition}
\newtheorem{definition}[theorem]{Definition}
\newtheorem{example}[theorem]{Example}

\newtheoremstyle{remark}
{}{}{}{}{\itshape}{}{ }{\thmname{#1}\thmnumber{ \itshape #2.}}
\theoremstyle{remark}
\newtheorem{remark}[theorem]{Remark}


\def\N{\mathbb{N}} 
\def\R{\mathbb{R}} 
\def\Z{\mathbb{Z}}

\def\x{\times}
\def\but{\setminus}

\def\phi{\varphi} 

\def\xr#1{\xrightarrow{#1}} 
 \renewcommand{\:}{\colon}

\DeclareMathOperator*{\colim}{colim}

\DeclareMathOperator{\Int}{Int} \DeclareMathOperator{\id}{id}

 \DeclareMathOperator{\st}{st}
\DeclareMathOperator{\lk}{lk}

\def\ost{\mathop{\mathring{\text{st}}}}

\def\tph#1{\raise2.5pt\hbox{\the\textfont1\char"7F}\!\!#1}
\def\tpm#1{\raise0pt\hbox{\the\textfont1\char"7F}\!#1}
\def\tpl#1{\lower1.5pt\hbox{\the\textfont1\char"7F}\!#1}


\def\bydef{\mathrel{\mathop:}=}

\DeclareSymbolFont{bskadd}{U}{bskma}{m}{n}
\DeclareFontFamily{U}{bskma}{\skewchar\font130 }
\DeclareFontShape{U}{bskma}{m}{n}{<->bskma10}{}
\DeclareMathSymbol{\varlrttriangle}{\mathord}{bskadd}    {"E4}
\def\skewtriangle{\mbox{$\varlrttriangle$}}
\def\sktr{{\mathchoice  
{\scalebox{1.5}\skewtriangle}
{\scalebox{1}\skewtriangle}
{\scalebox{0.8}\skewtriangle}
{\scalebox{0.5}\skewtriangle} }}

\makeatletter
\newcommand*\nullseq{\mathop{\mathpalette\@biguoperator{\bigsqcup}}}
\newcommand*\@biguoperator[2]{\ooalign{\hidewidth$#1$\raisebox{1pt}{\scriptsize$\star$}\hidewidth\cr$#1#2$\cr}}
\makeatother
\begin{document}

\title{Fine shape III: $\Delta$-spaces and $\nabla$-spaces}
\author{Sergey A. Melikhov}
\address{Steklov Mathematical Institute of Russian Academy of Sciences,
ul.\ Gubkina 8, Moscow, 119991 Russia}
\email{melikhov@mi-ras.ru}

\begin{abstract} 
In this paper we obtain results indicating that fine shape is tractable and ``not too strong'' even in 
the non-locally compact case, and can be used to better understand infinite-dimensional metrizable spaces and
their homology theories.

We show that every Polish space $X$ is fine shape equivalent to the limit of an inverse sequence of simplicial maps
between metric simplicial complexes.
A deeper result is that if $X$ is locally finite dimensional, then the simplicial maps can be chosen 
to be non-degenerate.
They cannot be chosen to be non-degenerate if $X$ is the Taylor compactum.
\end{abstract}

\maketitle
\section{Introduction}

A {\it $\Delta$-space} $X$ is the limit of an inverse sequence $\dots\xr{p_1}|K_1|\xr{p_0}|K_0|$ of simplicial maps between 
metric simplicial complexes $K_i$.
This definition is due to Kodama \cite{Ko1}.
If the bonding maps $p_i$ are non-degenerate,%
\footnote{A simplicial map is called non-degenerate if it has no point-inverses of dimension $>0$.}
we call $X$ a {\it $\nabla$-space}.
Every $\nabla$-space is the union of its finite-dimensional skeleta, and is easily seen to be homotopy equivalent to their 
mapping telescope, which is a locally finite-dimensional $\nabla$-space (see Corollary \ref{lfd-nabla}(b)).

\begin{example}
Not every compactum is fine shape equivalent to a $\nabla$-space.

Indeed, let $X$ be the Taylor compactum (see \cite{EH}*{5.5.10}, \cite{DS}*{10.3.1}, \cite{Sak3}). 
Thus if $M$ is the mapping cone of a degree $p$ map $\phi\:S^{2q-1}\to S^{2q-1}$, where $p$ is an odd prime,
$q$ is sufficiently large and $r=2(p-1)$, then 
$X=\lim\big(\dots\xr{\Sigma^{2r}f}\Sigma^{2r}M\xr{\Sigma^r f}\Sigma^rM\xr{f}M\big)$, where $f$ is a map 
that induces an isomorphism on reduced complex $K$-theory (such a map was constructed by J. F. Adams).

Suppose that $X$ is fine shape equivalent to a $\nabla$-space $Y$.
As noted above, $Y$ is homotopy equivalent to a locally finite-dimensional metrizable space $Z$.
Hence $X$ is fine shape equivalent to $Z$.
In particular, there exist fine shape morphisms $X\xr{u}Z\xr{d}X$ whose composition is the identity morphism.
Since $X$ is compact, it follows immediately from the definition of fine shape (see \cite{M-I}*{\S\ref{fish:fineshape}}) 
that $u$ factors as $X\xr{w}K\xr{j}Z$, where $K$ is a compact subset of $Z$ and $j$ is the fine shape class
induced by the inclusion.
Since $Z$ is locally finite-dimensional and $K$ is compact, $K$ is finite-dimensional.
Let $n=\dim K$.

Since $\Sigma^{kr}M$ is $(n+1)$-connected for large $k$, the fine shape (=strong shape) morphism $K\xr{j}Z\xr{d}X$
is represented by a constant map $K\to X$.
Hence $\id_X$ represents the strong shape class of a constant map $X\to X$.
In other words, $X$ has the strong shape of a point, which is a contradiction.
\end{example}

\begin{maintheorem} \label{main3} (a) Every Polish space $X$ is fine shape equivalent to a Polish $\Delta$-space $Y$.

(b) Moreover, there is a cell-like perfect map $Y\to X$ which is a fine shape equivalence.
\end{maintheorem}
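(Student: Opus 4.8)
The plan is to prove (b) first, since a cell-like perfect map that is a fine shape equivalence immediately yields (a). Fix a complete metric $d\le 1$ on $X$. Using that $X$ is Lindel\"of, I would choose a cofinal sequence $\U_0,\U_1,\dots$ of countable, locally finite open covers of $X$ such that $\U_{i+1}$ star-refines $\U_i$ and $\operatorname{mesh}\U_i\to 0$, together with subordinate partitions of unity $\{\phi^i_\alpha\}$ and refinement functions assigning to each $V\in\U_{i+1}$ a member of $\U_i$ containing it. Let $K_i=N(\U_i)$ be the nerve and $|K_i|$ its realization with the metric topology; local finiteness makes each $|K_i|$ a metric simplicial complex, and the refinement functions induce \emph{simplicial} projections $p_i\:|K_{i+1}|\to|K_i|$. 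Then $Y=\invlim\big(\dots\xr{p_1}|K_1|\xr{p_0}|K_0|\big)$ is by definition a $\Delta$-space, and as a closed subspace of a countable product of Polish spaces it is Polish.

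The core of the argument is to define $f\:Y\to X$ and control its fibers. A point of $Y$ is a thread $(y_i)$ with $p_i(y_{i+1})=y_i$; each $y_i$ has a carrier $\sigma_i\in K_i$, i.e.\ a finite family of members of $\U_i$ with nonempty common intersection, and compatibility forces $\Cl\big(\bigcup\sigma_{i+1}\big)\subseteq\Cl\big(\bigcup\sigma_i\big)$ up to the refinement. Since $\operatorname{mesh}\U_i\to 0$ and $d$ is complete, $\bigcap_i\Cl(\bigcup\sigma_i)$ is a single point, which I take to be $f((y_i))$. I would then verify that $f$ is continuous and surjective, the latter because the canonical maps $\kappa_i\:X\to|K_i|$, $\kappa_i(x)=\sum_\alpha\phi^i_\alpha(x)\,v_\alpha$, produce threads over any given $x$. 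For perfectness, the members of $\U_i$ meeting a small ball about $x$ form a finite subfamily, so $f^{-1}(x)$ is an inverse limit of finite complexes, hence compact, and a routine argument gives closedness. Cell-likeness is the key point: the carriers that can occur over $x$ are simplices of the subcomplex $N_i(x)=N\big(\{U\in\U_i:x\in U\}\big)$, which is a \emph{full} simplex because every subfamily of $\{U\ni x\}$ has $x$ in its intersection; thus, up to the standard identification, $f^{-1}(x)$ is an inverse limit of contractible complexes, so it is cell-like.

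Finally I must show $f$ is a fine shape equivalence. The natural inverse is the fine shape morphism $X\to Y$ carried by the maps $\kappa_i$ together with the homotopies $p_i\kappa_{i+1}\simeq\kappa_i$ furnished by the refinement functions; that $f$ and this morphism are mutually inverse reduces to the statement that $(\kappa_i)$ is a fine-shape (strong) expansion of $X$ in the sense of \cite{M-I}*{\S\ref{fish:fineshape}}. \textbf{The main obstacle is exactly this last step:} fine shape, unlike ordinary shape, demands \emph{coherent} systems of homotopies and higher homotopies, so one must upgrade the individual homotopies $p_i\kappa_{i+1}\simeq\kappa_i$ to a coherent family with diameters tending to $0$, and then check that cell-likeness of the fibers of the perfect map $f$ forces the induced map of coherent pro-homotopy data to be an isomorphism. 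Granting this, $f$ is a cell-like perfect fine shape equivalence, which proves (b), and (a) follows at once.
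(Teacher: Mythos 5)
Your plan has a fatal flaw in its final step: you propose to deduce that $f$ is a fine shape equivalence from the fact that it is a cell-like perfect map (plus some coherence bookkeeping). That implication is false in general, and this is precisely the point of the Taylor example discussed in the introduction and again at the start of \S4: a cell-like map between infinite-dimensional compacta need not be a fine shape equivalence, nor even a shape equivalence. So "cell-likeness of the fibers of the perfect map $f$ forces the induced map \dots to be an isomorphism" cannot be the mechanism, and the logical order of your proposal (prove (b), deduce (a)) is inverted relative to what can actually work. The paper proves the fine shape equivalence \emph{directly}: it constructs explicit maps $F$ and $G$ between the infinite mapping telescopes of the two inverse sequences, together with homotopies $\Phi$ and $\Psi$, and verifies that all of these are approaching maps and homotopies (which is what a fine shape morphism is in the framework of \cite{M-I}); cell-likeness of $\xi$ is then established \emph{afterwards}, as a separate property (Theorem \ref{delta}(d)), and is never used to obtain the equivalence. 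Your admitted "main obstacle" is thus real, but your proposed way around it is a dead end.

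There are also genuine gaps in the nerve construction itself. First, the canonical maps $\kappa_i\:X\to|K_i|$ do not satisfy $p_i\kappa_{i+1}=\kappa_i$ on the nose, only up to homotopy, so $(\kappa_i(x))$ is not a thread and your surjectivity argument fails as stated; one must instead use the full simplexes $N_i(x)$ spanned by $\{U\in\mathcal{U}_i : x\in U\}$, which do map into one another under the bonding maps. Second, your identification of $f^{-1}(x)$ with $\lim N_i(x)$ is incorrect: a thread mapping to $x$ has carriers whose unions merely \emph{converge} to $x$, and the members of those carriers need not contain $x$ (they can approach $x$ from one side), so the fiber can be strictly larger than the inverse limit of full simplexes, and its cell-likeness does not follow. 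The paper sidesteps both problems by starting from Isbell's theorem, which presents $X$ as an honest inverse limit of a \emph{scalable} sequence of locally finite polyhedra, and then using Sakai's simplicial approximation theorem to replace the polyhedral bonding maps by homotopic simplicial ones; the fibers of $\xi$ are then identified (Lemma \ref{kaul}) with inverse limits of closed stars, i.e.\ of compact contractible polyhedra, which gives cell-likeness. A nerve-based approach can in principle be rescued (this is essentially the theory of resolutions), but scalability and the honest-inverse-limit property are exactly the controls your sketch needs and does not supply.
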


The compact case of (a) is due to Kodama \cite{Ko1} (taking into account that for compacta fine shape coincides 
with shape on the level of isomorphism classes of objects, see \cite{M1}*{Proposition 2.6}).
He also obtained the finite-dimensional compact case of (b) \cite{Ko3} (building on 
the construction of Kaul \cite{Kaul}) and similar results for finite dimensional locally compact metrizable spaces 
(regarding their Borsuk shape and their Fox shape) \cite{Ko2}, \cite{Ko3}.

\begin{maintheorem} \label{main4} Every locally finite-dimensional Polish space $X$ is fine shape equivalent 
to a locally finite-dimensional $\nabla$-space.
\end{maintheorem}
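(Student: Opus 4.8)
The plan is to upgrade the inverse sequence produced by Theorem~\ref{main3} so that its bonding maps become non-degenerate, using local finite-dimensionality to keep the complexes from growing in dimension as the covers are refined. By Theorem~\ref{main3} we already have a Polish $\Delta$-space $Y_0=\lim\big(\dots\xr{p_1}|K_1|\xr{p_0}|K_0|\big)$ together with a cell-like perfect fine shape equivalence $Y_0\to X$. The obstruction to non-degeneracy is purely dimension-theoretic: a non-degenerate simplicial map $|L'|\to|L|$ carries each simplex injectively onto a simplex of the same dimension, so it cannot raise dimension, and hence forces $\dim|K_{i+1}|\le\dim|K_i|$ region by region. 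This is precisely what the Taylor compactum obstructs in the example above, and the whole point of the hypothesis is that local finite-dimensionality lets the successive complexes be taken of the \emph{same} local dimension, so this inequality can be met.

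First I would re-examine the construction underlying Theorem~\ref{main3} and record that, when $X$ is locally finite-dimensional, the covers (equivalently, the nerves $K_i$) can be chosen of \emph{locally bounded order}: writing $X=\bigcup_m X_m$ with each $\overline{X_m}$ of finite dimension $n_m$, one arranges the $i$-th cover to have order $\le n_m+1$ over $X_m$. This makes every $K_i$ a locally finite-dimensional metric simplicial complex whose local dimension over the $m$-th region stabilizes to $n_m$. Crucially, the refinements are to be taken so as not to raise the local dimension, i.e.\ so that $\dim|K_{i+1}|\le\dim|K_i|$ over each $X_m$; here I use that a locally finite-dimensional Polish space admits arbitrarily fine open covers of the prescribed local order, together with a parent-assignment for the refinement that does not merge intersecting sets of $\U_{i+1}$ onto a common set of $\U_i$ beyond what the local dimension allows.

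The crux is then to replace each bonding map $p_i$ by a non-degenerate simplicial map without changing the fine shape type of the limit. Having arranged $\dim|K_{i+1}|\le\dim|K_i|$ locally, I would invoke the general-position principle that a PL map $|K_{i+1}|\to|K_i|$ whose domain has local dimension not exceeding that of the target is approximable, arbitrarily closely and after suitable subdivision of the target, by a non-degenerate simplicial map (generically the vertex-images of each simplex can be placed affinely independently once the target has enough vertices in general position). Performing these approximations with rapidly decreasing control yields a new inverse sequence $|K_i'|$ with non-degenerate bonding maps $p_i'$, each compatible with and close to $p_i$; the cofinality/closeness mechanism used to identify fine shape types of inverse limits in \cite{M-I}*{\S\ref{fish:fineshape}} then shows $Y=\lim|K_i'|$ has the fine shape of $Y_0$, hence of $X$. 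Since the $p_i'$ are non-degenerate and the $K_i'$ are locally finite-dimensional, $Y$ is a locally finite-dimensional $\nabla$-space; if convenient one finishes by passing to the mapping telescope of its finite-dimensional skeleta, which by Corollary~\ref{lfd-nabla}(b) is again a locally finite-dimensional $\nabla$-space of the same homotopy, hence fine shape, type.

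The main obstacle I anticipate is carrying out these non-degeneracy approximations \emph{simultaneously and compatibly along the entire sequence} while preserving the fine shape type in the non-compact, merely Polish setting: the perturbations must be level-preserving with respect to the skeletal filtration, uniformly controlled so that the maps still converge and the limit is unchanged, and the local general-position argument must be patched over the exhaustion $X=\bigcup_m X_m$ without the local orders interfering. In particular, one cannot simply inherit local finite-dimensionality of $Y$ from the cell-like map $Y_0\to X$ of Theorem~\ref{main3}, since cell-like maps may raise dimension — this is exactly the Taylor phenomenon — so local finite-dimensionality must be built into the complexes $K_i'$ from the start, as above.
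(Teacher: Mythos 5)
Your reduction to Theorem~\ref{main3} and your remark that local finite-dimensionality must be built into the complexes from the start are both consistent with the paper (Theorem~\ref{delta}(f) does exactly that). But the crux of your argument --- the ``general-position principle'' that a simplicial map whose domain has local dimension not exceeding that of the target can be approximated, after subdivision, by a non-degenerate simplicial map --- is false when the target is not a manifold, and this is a fatal gap. Concretely, let $K$ be the square $[-1,1]\times[0,1]$, let $L=\tau_1\vee_v\tau_2$ be the wedge of two $2$-simplexes at a vertex $v$, and let $f\colon K\to L$ be the simplicial map (for suitable triangulations) collapsing the arc $d=\{0\}\times[0,1]$ to $v$ and sending $[-1,0]\times[0,1]$ onto $\tau_1$ and $[0,1]\times[0,1]$ onto $\tau_2$. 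All local dimensions equal $2$, yet no non-degenerate simplicial map $g$, from any subdivision of $K$ to any subdivision of $L$, can be uniformly close to $f$: the sets $\tau_1\setminus\{v\}$ and $\tau_2\setminus\{v\}$ are open and disjoint in $L$, so each horizontal arc $[-1,1]\times\{s\}$, whose endpoints are forced by closeness to lie in $g^{-1}(\tau_1\setminus\{v\})$ and $g^{-1}(\tau_2\setminus\{v\})$ respectively, must meet $g^{-1}(v)$; but $g^{-1}(v)$ is finite (non-degeneracy gives at most one preimage point per simplex of the finite complex triangulating $K$), and a finite set cannot meet uncountably many disjoint arcs. Subdividing the target cannot help, since the obstruction lives in the underlying polyhedron $|L|$: the degeneracy along $d$ is forced by the local topology of the target at the wedge point, not by any failure of general position, and collapses of exactly this kind are what nerve bonding maps routinely do. (General position for non-degeneracy is a manifold-target phenomenon; your appeal to placing vertex images ``affinely independently'' implicitly assumes the target has enough room at every point, which a polyhedron does not.)

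This is precisely why the paper does not perturb the bonding maps at all. Instead it changes the complexes: each $K$ is replaced by its non-degenerate resolution $\hat K^n\subset K^\flat\boxtimes\Delta^n$, to which \emph{every} simplicial map $K\to L$ lifts canonically as a non-degenerate simplicial map $\hat K^n\to\hat L^n$ --- no approximation, no choices, and no dimension bookkeeping for the covers. The price is the combinatorial core of the paper (Proposition~\ref{collapse} and Theorem~\ref{def-retr}): $\hat K^n$ simplicially collapses onto a copy of $K^\flat$, and it is this collapse (fed into the telescope/approaching-map machinery, as in the proof of Theorem~\ref{nabla}(a)) that makes $\lim|\hat K_i|$ fine shape equivalent to $\lim|K_i|$. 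Also, for the locally finite-dimensional case the paper does not control the dimensions of nerves as you propose; it routes through decomposability, replacing $X$ up to homotopy by the mapping telescope of its finite-dimensional normal $\Delta$-subspaces (Theorem~\ref{telescope-decomposition}) and applying the resolution levelwise (Theorem~\ref{nabla}(b)). To salvage your plan you would need a replacement for the false approximation principle, and that replacement is exactly the paper's main construction.
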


In the case where $X$ is finite-dimensional, a bit more can be said (see Theorem \ref{nabla}(a)).
This finite-dimensional version of Theorem \ref{main4} will be applied to the study of axiomatic homology in 
the upcoming revision of the preprint \cite{M-II}.

The proof of Theorem \ref{main4} involves combinatorial constructions which may be of independent interest.
Given a simplicial complex $K$, we introduce its ``non-degenerate resolution'', which is a simplicial 
complex $\hat K^\infty$ endowed with a simplicial map onto the barycentric subdivision $K^\flat$ of $K$ such that 
every simplicial map $K\to L$ between simplicial complexes (or rather the induced map $K^\flat\to L^\flat$) 
lifts to a {\it non-degenerate} simplicial map $\hat K^\infty\to\hat L^\infty$.
We prove that if $K$ is finite-dimensional, then $\hat K^\infty$ deformation retracts onto a copy of $K^\flat$
(Theorem \ref{def-retr2}); a simplified version of this result is used to obtain Theorem \ref{main4}.
The deformation retraction does not seem to be easily obtainable by usual techniques of algebraic topology
and results from explicit simplicial collapses constructed by a multi-layered induction.
At the start of this construction we encounter a remarkable subdivision of a simplex into products of
simplexes (see Example \ref{grayson}).

The beginning of the paper is devoted to a study of the topology of $\Delta$-spaces (\S\ref{topology}) and of their
special kinds (\S\ref{special}), which does not involve any references to fine shape.

\section{Topology of $\Delta$-spaces} \label{topology}

\subsection{Simplexes and Hilbert simplexes}

We recall that a {\it $\Delta$-space} $X$ is the limit of an inverse sequence $\dots\xr{p_1}|K_1|\xr{p_0}|K_0|$ of simplicial maps 
between metric simplicial complexes $K_i$.%
\footnote{Here we speak of ``metric simplicial complexes'' rather than ``polyhedra'' so as to ensure that they come endowed 
with the more rigid additional structure. (See \cite{M00} for the definitions.)}

\begin{lemma} \label{surjective}
(a) The bonding maps $p_i$ can be chosen to be surjective.

(b) $X$ is the union of all the inverse limits of the form $\dots\xr{f_1}\sigma^{n_1}\xr{f_0}\sigma^{n_0}$, 
where each $\sigma^{n_i}\in K_i$, each $f_i$ is the restriction of $p_i$ and a surjective map.
\end{lemma}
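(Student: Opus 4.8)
The plan is to base both parts on the elementary fact that simplicial maps respect carriers. Each point $y$ of a metric simplicial complex $K$ has a carrier $\langle y\rangle\in K$, the unique simplex containing $y$ in its interior, and a simplicial map $\phi\:K\to L$ sends $\Int\langle y\rangle$ onto $\Int\phi(\langle y\rangle)$, where $\phi(\sigma)$ denotes the image simplex; hence $\langle\phi(y)\rangle=\phi(\langle y\rangle)$. Call a sequence of simplices $\sigma^{n_i}\in K_i$ a \emph{surjective thread} if $p_i(\sigma^{n_{i+1}})=\sigma^{n_i}$ for all $i$, so that each restriction $f_i=p_i|\:\sigma^{n_{i+1}}\to\sigma^{n_i}$ is onto.

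For part (b) I would argue as follows. Given a thread $x=(x_i)\in X$, its carriers $\sigma^{n_i}\bydef\langle x_i\rangle$ satisfy $p_i(\sigma^{n_{i+1}})=\langle p_i(x_{i+1})\rangle=\langle x_i\rangle=\sigma^{n_i}$, so they form a surjective thread and $x$ lies in the corresponding inverse limit $\invlim(\dots\xr{f_1}\sigma^{n_1}\xr{f_0}\sigma^{n_0})$. Conversely, any such inverse limit is a subset of $X$, since a thread in it is in particular a thread in $\dots\xr{p_1}|K_1|\xr{p_0}|K_0|$. The two inclusions prove (b).

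For part (a) the naive idea of replacing $K_i$ by its eventual image $\bigcap_{j\ge i}\im\big(|K_j|\to|K_i|\big)$ fails in the non-compact case: a shift on $0$-dimensional complexes can have empty limit and yet nonempty eventual images, onto which the bonding maps are not surjective. Instead I would let $M_i\subseteq K_i$ be the set of simplices occurring at level $i$ in some surjective thread. From the definition $p_i(M_{i+1})=M_i$, and since a simplicial map carries each simplex onto its image, the induced map $|M_{i+1}|\to|M_i|$ is onto. As $M_i\subseteq K_i$ we have $\invlim|M_i|\subseteq X$, while the reverse inclusion holds because by the computation in (b) the carriers of every point of $X$ lie in $M_i$; thus $\invlim|M_i|=X$ with surjective bonding maps.

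The step I expect to be the main obstacle is showing that $M_i$ is a \emph{subcomplex}, i.e.\ closed under faces, for only then are the $|M_i|$ metric simplicial complexes and the maps $p_i|\:M_{i+1}\to M_i$ simplicial. Given a surjective thread $(\sigma^{n_j})$ through $\tau=\sigma^{n_i}$ and a face $\tau'<\tau$, I would produce a surjective thread through $\tau'$ by propagating $\tau'$ in both directions. Downward, the successive images of $\tau'$ are faces of the $\sigma^{n_j}$ and the restrictions are automatically onto their images. Upward, one builds $\tau'_{j+1}<\sigma^{n_{j+1}}$ with $p_j(\tau'_{j+1})=\tau'_j$ by choosing, for each vertex of $\tau'_j$, a vertex of $\sigma^{n_{j+1}}$ mapping to it; here the surjectivity of $p_j\:\sigma^{n_{j+1}}\to\sigma^{n_j}$ is precisely what is needed. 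This places $\tau'$ in $M_i$ and completes the verification that $\dots\xr{p_1}|M_1|\xr{p_0}|M_0|$ has surjective bonding maps and limit $X$.
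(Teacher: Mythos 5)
Your proof is correct, and part (b) is exactly the paper's argument: the carriers $\langle x_i\rangle$ of a thread form a surjective thread, and conversely every such inverse limit lies in $X$. For (a) the paper takes a shorter, dual route: it defines $L_i$ to be the smallest subcomplex of $K_i$ whose polyhedron contains the image $p^\infty_i(X)$ of $X$, and simply asserts $p_i(L_{i+1})=L_i$. That definition makes being a subcomplex automatic but leaves the equality $p_i(L_{i+1})=L_i$ to be checked, whereas your $M_i$ (simplices occurring in surjective threads) makes $p_i(M_{i+1})=M_i$ automatic and instead requires the face-closure argument you give; both verifications reduce to the same fact you isolate, that a surjective simplicial map of simplexes is surjective on vertices, so faces can be lifted upward along the thread. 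In fact the two constructions yield the same subcomplex: a surjective thread of simplexes has nonempty inverse limit (compactness) with surjective projections, so each of its simplexes is the carrier of some point of $X$, giving $M_i\subset L_i$, while carriers and their faces lie in $M_i$ by your argument, giving $L_i\subset M_i$. So your write-up is essentially an expanded version of the paper's two-line proof with the bookkeeping distributed differently; what it buys is that the paper's ``clearly'' is replaced by an explicit lifting argument. Your side remark that eventual images fail in the noncompact case is correct, but note that the paper's $L_i$ is built from the image of the limit $X$ itself, not from eventual images, so it is immune to that pathology.
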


\begin{proof}[Proof. (a)] Let $L_i$ be the smallest subcomplex of $K_i$ such that $|L_i|$ contains the image of $X$.
Then clearly $p_i(L_{i+1})=L_i$.
\end{proof}

\begin{proof}[(b)] Given an $x=(x_0,x_1,\dots)\in X$, let $\sigma^{n_i}$ be the smallest simplex of $K_i$
containing $x_i$.
Then clearly $p_i(\sigma^{n_{i+1}})=\sigma^{n_i}$.
\end{proof}

We will regard a $\Delta$-space $X$ as a space endowed with an additional structure, namely, the 
representation of $X$ as the union of all the inverse limits as in (b).
All these inverse limits are either simplexes (when $\lim n_i$ exists) or {\it Hilbert simplexes} (otherwise).
It is easy to see that the vertex set of a Hilbert simplex (that is, the inverse limit of the vertex sets 
of the simplexes $\sigma^{n_i}$) can be an arbitrary $0$-dimensional compactum of infinite cardinality.

\begin{lemma} \label{infty-simplex}
Each Hilbert simplex of a $\Delta$-space contains a copy of the Hilbert cube.
\end{lemma}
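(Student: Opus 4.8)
The plan is to recognize a Hilbert simplex as an infinite-dimensional compact convex set and to embed the Hilbert cube into it as an explicit convex combination of ``vertex threads''. First I would fix a Hilbert simplex $L=\lim\big(\dots\xr{f_1}\sigma^{n_1}\xr{f_0}\sigma^{n_0}\big)$ as in Lemma \ref{surjective}(b). Because each $f_i$ is the restriction of a simplicial map and is surjective, it is affine (linear in barycentric coordinates) and carries the vertex set $V_{i+1}$ of $\sigma^{n_{i+1}}$ \emph{onto} the vertex set $V_i$ of $\sigma^{n_i}$; in particular the dimensions $n_i$ are non-decreasing, and ``Hilbert simplex'' means precisely that they are unbounded. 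The vertex threads, i.e.\ the points of $L$ all of whose coordinates are vertices, form the inverse limit $V_\infty=\lim V_i$, which is a $0$-dimensional compactum. Since the bonding maps $V_{i+1}\to V_i$ are surjective, each projection $\pi_i\:V_\infty\to V_i$ is surjective (the usual inverse-limit argument), so $|V_\infty|\ge\sup_i|V_i|=\infty$ and $V_\infty$ is infinite.

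As $V_\infty$ is an infinite $0$-dimensional compactum, it contains a sequence of distinct vertex threads $w^1,w^2,\dots$ converging to a thread $w^0\notin\{w^k:k\ge1\}$. Viewing each $\sigma^{n_i}$ as a convex subset of $\R^{n_i+1}$, I would define $h$ from the Hilbert cube $\prod_{k\ge1}[0,1]$ to $L$ by letting $h(t)$ be the convex combination $\big(1-\sum_{k\ge1}2^{-k}t_k\big)w^0+\sum_{k\ge1}2^{-k}t_k\,w^k$, computed coordinatewise. This lands in $L$: at each level the series is a convex combination of points of the compact convex simplex $\sigma^{n_i}$, hence converges there, and coherence with the $f_i$ holds because each $f_i$ is affine and continuous and so commutes with convergent convex combinations. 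Continuity of $h$ is routine, since each coordinate of $h(t)$ depends continuously on $t$: the defining series converges uniformly into the finite-dimensional $\sigma^{n_i}$.

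The hard part is injectivity, and this is where the $0$-dimensionality of $V_\infty$ is essential: at any single finite level the barycentric coordinate at the vertex $w^k_i$ may be contaminated by the masses of those other chosen threads that happen to pass through the same vertex, so the weight $2^{-k}t_k$ cannot be read off level by level. To recover it I would use that $V_\infty$ has a basis of clopen sets: for each $k\ge1$ there is a clopen $U_k\ni w^k$ disjoint from the closed set $\{w^0\}\cup\{w^m:m\ge1,\,m\ne k\}$, and every clopen subset of $V_\infty$ has the form $\pi_N^{-1}(A)$ for some level $N$ and some $A\subseteq V_N$. Summing the barycentric coordinates of the $N$-th coordinate of $h(t)$ over the vertices in $A$ then yields the total mass of the chosen threads lying in $U_k$, namely $2^{-k}t_k$, so $t_k$ is determined by $h(t)$ and $h$ is injective. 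Being a continuous injection of a compact space into a Hausdorff space, $h$ is an embedding, and its image is the desired copy of the Hilbert cube. (Alternatively, one may identify $L$ with the space of probability measures on $V_\infty$ and invoke Keller's theorem that every infinite-dimensional compact convex subset of a Fr\'echet space is homeomorphic to the Hilbert cube, which even shows that $L$ itself is a Hilbert cube.)
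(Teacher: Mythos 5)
Your proof is correct, but it takes a genuinely different route from the paper's. The paper proceeds levelwise: after arranging (by factoring the vertex surjections into steps that drop one vertex at a time and by omitting members of the sequence) that the Hilbert simplex is the limit of simplicial surjections $\dots\xr{p_2}\Delta^2\xr{p_1}\Delta^1\xr{p_0}\Delta^0$, it constructs embeddings $g\:\Delta^i\x I\to\Delta^{i+1}$ such that $p_i\circ g$ is the projection, and hence compatible embeddings $I^i\to\sigma^i$; the Hilbert cube then embeds as the inverse limit of the cubes $I^i$. You instead work with the profinite vertex space $V_\infty=\lim V_i$, pick a nontrivial convergent sequence of vertex threads, and embed the Hilbert cube by an explicit infinite convex combination; the delicate step is injectivity, and your treatment of it (every clopen subset of $V_\infty$ is pulled back from a finite level, so the weight $2^{-k}t_k$ can be read off as a sum of barycentric coordinates at that level) is exactly right. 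What the paper's argument buys is elementarity: it needs nothing about profinite spaces or affine maps commuting with infinite convex combinations, only the combinatorial factoring of surjections and one explicit embedding $\Delta^i\x I\to\Delta^{i+1}$. What yours buys is a closed formula, and --- via the Keller-theorem variant, identifying the Hilbert simplex affinely with the inverse limit of the simplexes of probability measures on the $V_i$, i.e.\ with the probability measures on $V_\infty$ --- the stronger conclusion that the Hilbert simplex is itself homeomorphic to the Hilbert cube, not merely contains one.
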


\begin{proof}
The given Hilbert simplex $\sigma^\infty$ is an inverse limit of simplicial surjections 
$\dots\to\sigma^{n_1}\to\sigma^{n_0}$ between $n_i$-simplexes such that $n_i\to\infty$ as $i\to\infty$.
By omitting some members of the inverse sequence we may assume that each $n_{i+1}>n_i$.
It is easy to see that every surjection between finite sets factors into a composition of
surjections between finite sets whose cardinalities differ by $1$.
Hence $\sigma^\infty$ is a copy an inverse limit of simplicial surjections of the form
$\dots\xr{p_2}\Delta^2\xr{p_1}\Delta^1\xr{p_0}\Delta^0$.
It is easy to see that for every $i$ there is an embedding $g\:\Delta^i\x I\to\Delta^{i+1}$ such that 
the composition $\Delta^i\x I\xr{g}\Delta^{i+1}\xr{p_i}\Delta^i$ is the projection.
Hence there are embeddings $f_i\:I^i\to\sigma^i$ such that every diagram
\[\begin{CD}
I^{i+1}@>f_{i+1}>>\sigma^{i+1}\\
@V\pi_iVV@Vp_iVV\\
I^i@>f_i>>\sigma^i
\end{CD}\]
commutes, where $\pi_i\:I^{i+1}\to I^i$ is the projection along the last coordinate.
\end{proof}

We will say that a $\Delta$-space $X$ is {\it given} by an inverse sequence 
$\dots\xr{p_1}|K_1|\xr{p_0}|K_0|$
of simplicial maps between metric simplicial complexes $K_i$ if $X=\lim |K_i|$ as $\Delta$-spaces,
i.e.\ the simplexes and Hilbert simplexes of $X$ are inverse limits of simplexes of $K_i$.

\subsection{$\Delta$-subspaces}

By a {\it $\Delta$-subspace} of the $\Delta$-space $X$ we mean any closed subspace of $X$ which is 
a union of simplexes and Hilbert simplexes of $X$.

\begin{lemma} \label{delta-subspaces}
(a) The union $X^{(n)}$ of all $i$-simplexes of $X$ for all $i\le n$ is a $\Delta$-subspace.

(b) Every $\Delta$-subspace $Y$ of $X$ is itself a $\Delta$-space.
\end{lemma}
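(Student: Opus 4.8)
The plan is to realize both subspaces as inverse limits of subcomplexes of the $K_i$, so that the $\Delta$-space structure is inherited coordinatewise. Throughout write $\pi_i\:X\to|K_i|$ for the projection, so that a point of $X$ is a compatible thread $(x_0,x_1,\dots)$ with $x_i=\pi_i(x)\in|K_i|$, and recall from Lemma \ref{surjective} that the simplex or Hilbert simplex of $X$ carrying $x$ is $\lim\sigma^{n_i}$, where $\sigma^{n_i}$ is the smallest simplex of $K_i$ containing $x_i$.

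For (a) I would take $K_i^{(n)}$ to be the $n$-skeleton of $K_i$. Since a simplicial map never raises dimension, $p_i(|K_{i+1}^{(n)}|)\subseteq|K_i^{(n)}|$, so $\lim|K_i^{(n)}|$ is a well-defined $\Delta$-space sitting inside $X$ as a closed subset (it is the intersection of $X$ with the closed set $\prod_i|K_i^{(n)}|$). The heart of (a) is the identity $X^{(n)}=\lim|K_i^{(n)}|$. Here I would use the fact that along any simplex of $X$ the dimensions $n_i$ are non-decreasing: a surjective simplicial map between simplexes carries vertices onto vertices, so $\dim\sigma^{n_{i+1}}\ge\dim\sigma^{n_i}$. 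Consequently a thread lies in $X^{(n)}$ --- i.e.\ its carrier is an $i$-simplex with $i\le n$ --- exactly when every $n_i\le n$, i.e.\ exactly when every $x_i\in|K_i^{(n)}|$. This gives the identity, and since $X^{(n)}$ is by construction a union of simplexes of $X$, it is a $\Delta$-subspace.

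For (b) I would set $|L_i|\bydef\pi_i(Y)$, mirroring the construction of Lemma \ref{surjective}(a), and first check that this is a subcomplex of $K_i$ whose bonding maps are restrictions of the $p_i$. Because $Y$ is a union of simplexes and Hilbert simplexes and each such carrier $\sigma=\lim\sigma^{n_i}$ has $\pi_i(\sigma)=\sigma^{n_i}$ (an inverse limit of surjections of compacta surjects onto each term), $\pi_i(Y)$ is the union of the simplexes $\sigma^{n_i}$ over all carriers $\sigma\subseteq Y$; that it is closed under passage to faces follows by choosing, for a given face $\tau$ of such a $\sigma^{n_i}$, a thread $y\in\sigma$ with $\pi_i(y)$ in the relative interior of $\tau$, whose carrier is then a subcarrier of $\sigma$ realizing $\tau$. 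The inclusion $Y\subseteq\lim|L_i|$ is immediate from $\pi_i(Y)=|L_i|$.

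The substantial point, and the step I expect to be the main obstacle, is the reverse inclusion $\lim|L_i|\subseteq Y$. Given a thread $x\in\lim|L_i|$, its smallest carrier in $K_i$ is some $\tau^{m_i}\in L_i$, and I would exploit the closedness of $Y$ together with the surjectivity of carriers as follows. For each $j$, the simplex $\tau^{m_j}$ is the $j$-th term of some carrier $\sigma^{(j)}\subseteq Y$, so I may pick $y^{(j)}\in\sigma^{(j)}\subseteq Y$ with $\pi_j(y^{(j)})=x_j$; compatibility of threads then forces $\pi_i(y^{(j)})=x_i$ for all $i\le j$. Hence $y^{(j)}\to x$ in the inverse limit topology, and since $Y$ is closed, $x\in Y$. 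This establishes $Y=\lim|L_i|$, and as the smallest simplex of $L_i$ through a point of $|L_i|$ coincides with that of $K_i$, the $\Delta$-structure of $\lim|L_i|$ is exactly the one $Y$ inherits from $X$; thus $Y$ is a $\Delta$-space. The delicate part is ensuring the approximants $y^{(j)}$ converge to the prescribed thread $x$ rather than to some other point of $Y$, which is why it is essential to choose them with $\pi_j(y^{(j)})=x_j$ on the nose, forcing agreement with $x$ on all earlier coordinates.
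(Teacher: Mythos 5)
Your proposal is correct and takes essentially the same route as the paper: for (a) the identification $X^{(n)}=\lim|K_i^{(n)}|$, and for (b) setting $|L_i|=p^\infty_i(Y)$ (a polyhedron of a subcomplex because $Y$ is a union of carriers whose projections are simplexes of $K_i$), then deducing $\lim|L_i|\subseteq Y$ from closedness of $Y$ via points $y^{(j)}\in Y$ agreeing with the given thread in the $j$-th (hence all earlier) coordinates. The only difference is that you spell out steps the paper dismisses as ``clearly'' --- the monotonicity of carrier dimensions in (a) and the face-closure of $p^\infty_i(Y)$ in (b) --- which is harmless extra detail, not a different argument.
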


The $\Delta$-subspace of (a) will be called the {\it $n$-skeleton} of $X$.

\begin{proof}[Proof. (a)] If $X$ is given by an inverse sequence $\dots\xr{p_1}|K_1|\xr{p_0}|K_0|$, then
clearly $X^{(n)}=\lim |K_i^{(n)}|$, where $K_i^{(n)}$ denotes the usual $n$-skeleton of $K_i$.
\end{proof}

\begin{proof}[(b)] Let $X$ be given by an inverse sequence $\dots\xr{p_1}|K_1|\xr{p_0}|K_0|$.
Since $Y$ is a union of simplexes and Hilbert simplexes of $X$, each $p^\infty_i(Y)$ is a union 
of simplexes of $K_i$.
So $p^\infty_i(Y)=|L_i|$, where $L_i$ is the smallest subcomplex of $K_i$ such that $|L_i|$ 
contains $p^\infty_i(Y)$.
Then $\lim|L_i|$ certainly contains $Y$.
Given a thread $x=(x_0,x_1,\dots)\in\lim|L_i|$, each $x_i$ is the image of some $y_i\in Y$.
Then $y_i\to x$, so since $Y$ is closed, we get that $x\in Y$.
Thus $Y=\lim |L_i|$.
\end{proof}

\subsection{Countably dimensional $\Delta$-spaces}

A space $X$ is called {\it (strongly) countably dimensional} if $X$ is a union of countably many of its 
finite-dimensional (closed) subspaces $X_1,X_2,\dots$.
Here we may assume without loss of generality that $X_1\subset X_2\subset\dots$ (by replacing each $X_i$ with 
$X_1\cup\dots\cup X_i$).

Every subspace of a strongly countably dimensional metrizable space is itself strongly 
countably dimensional (see \cite{Sak2}*{Thoerem 5.3.3}).
In particular, subsets of polyhedra are strongly countably dimensional.
On the other hand, every strongly countably dimensional separable metrizable space embeds in a separable 
polyhedron, namely, in the space $c_{00}$ of finite sequences of reals, also known as $l_\infty^f$ 
(see \cite{M00}*{Theorem \ref{book:sfd-embedding}}).

\begin{proposition} The following are equivalent for a $\Delta$-space $X$:
\begin{enumerate}
\item $X$ is countably dimensional;
\item $X$ is strongly countably dimensional;
\item $X$ is the union of its finite-dimensional skeleta;
\item $X$ contains no Hilbert simplexes.
\end{enumerate}
\end{proposition}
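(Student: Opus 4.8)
The plan is to prove the cycle of implications $(4)\Rightarrow(3)\Rightarrow(2)\Rightarrow(1)\Rightarrow(4)$, so that all four conditions become equivalent. The easy arrows are $(3)\Rightarrow(2)$ and $(2)\Rightarrow(1)$: if $X$ is the union of its skeleta $X^{(n)}$, then since each $X^{(n)}$ is a closed (by Lemma~\ref{delta-subspaces}(a)) finite-dimensional subspace, $X$ is strongly countably dimensional by definition; and strong countable dimensionality trivially implies countable dimensionality. So the substance is in the two remaining implications, $(1)\Rightarrow(4)$ and $(4)\Rightarrow(3)$.

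For $(4)\Rightarrow(3)$, I would argue as follows. The skeleta $X^{(n)}$ are an increasing sequence of $\Delta$-subspaces, and I claim $X=\bigcup_n X^{(n)}$ precisely when $X$ has no Hilbert simplexes. Indeed, $X$ is by construction the union of all its simplexes and Hilbert simplexes; a genuine simplex (inverse limit with $\lim n_i$ finite) lies in some finite skeleton $X^{(n)}$, whereas a Hilbert simplex lies in no $X^{(n)}$. Hence if $X$ contains no Hilbert simplexes then $X=\bigcup_n X^{(n)}$, which is exactly (3). (This argument also gives the reverse: if $X$ has a Hilbert simplex, that Hilbert simplex is not covered by the skeleta, so (3) fails. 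So (3) and (4) are in fact directly equivalent, and one could reorganize accordingly.)

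The main obstacle is $(1)\Rightarrow(4)$: showing that a countably dimensional $\Delta$-space contains no Hilbert simplexes. I would prove the contrapositive, using Lemma~\ref{infty-simplex}: if $X$ contained a Hilbert simplex $\sigma^\infty$, then $\sigma^\infty$ (and hence $X$) would contain a copy of the Hilbert cube $I^\infty=\invlim(\dots\to I^{i+1}\xr{\pi_i}I^i\to\cdots)$ via the embeddings $f_i\colon I^i\to\sigma^i$ supplied by that lemma. The Hilbert cube is the classical example of a space that is \emph{not} countably dimensional. Since every subspace of a countably dimensional metrizable space is again countably dimensional (the cited embedding/subspace facts for strongly countably dimensional spaces give the metrizable-subspace heredity, and one invokes the standard fact that $I^\infty$ fails even weak countable dimensionality), the presence of a copy of $I^\infty$ inside $X$ would contradict (1). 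Therefore a countably dimensional $\Delta$-space has no Hilbert simplex, which is (4).

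The one point demanding care is the non-countable-dimensionality of the Hilbert cube and the precise heredity statement being used: I would cite the standard dimension-theory result that $I^\infty$ is not countably dimensional (equivalently, $\dim$ and countable-dimensionality detect $I^\infty$), together with the subspace-heredity of countable dimensionality for metrizable spaces, parallel to the strongly-countably-dimensional heredity already quoted from \cite{Sak2}*{Theorem 5.3.3}. With those two inputs the contrapositive is immediate, and the circle of implications closes.
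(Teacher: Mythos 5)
Your proposal is correct and takes essentially the same approach as the paper: the implications (4)$\Rightarrow$(3)$\Rightarrow$(2)$\Rightarrow$(1) are dismissed as easy, and (1)$\Rightarrow$(4) is proved by contradiction using Lemma~\ref{infty-simplex} to produce a copy of the Hilbert cube together with the classical fact that $I^\infty$ is not countably dimensional. The only immaterial difference is bookkeeping: you invoke heredity of countable dimensionality for arbitrary metrizable subspaces, whereas the paper needs only monotonicity of dimension on closed subsets, intersecting each finite-dimensional piece of $X$ with the compact (hence closed) copy of $I^\infty$.
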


\begin{proof} Clearly (4)$\Rightarrow$(3)$\Rightarrow$(2)$\Rightarrow$(1).
Suppose that $X$ is countably dimensional but contains a Hilbert simplex $\sigma^\infty$. 
Then $X$ is a union of its subspaces $X_1,X_2,\dots$ with $\dim X_i\le i$.
Also by Lemma \ref{infty-simplex} $\sigma^\infty$ contains a copy of the Hilbert cube $I^\infty$.
Now $I^\infty=\bigcup_i Y_i$, where each $Y_i=X_i\cap I^\infty$.
Each $Y_i$, being a closed subset of $X_i$, is easily seen to be of dimension $\le i$.
So we get that $I^\infty$ is countably dimensional.
But in reality $I^\infty$ is not countably dimensional 
(see \cite{Nag}*{VI.1.B} or \cite{Sak2}*{5.6.1 and 5.6.2}), so we obtain a contradiction.
\end{proof}

\subsection{Locally finite-dimensional $\Delta$-spaces}

A space $X$ is called {\it locally finite-dimensional} if every $x\in X$ has 
a finite-dimensional neighborhood.

\begin{lemma} A metrizable space $X$ is locally finite dimensional if and only if 
$X=\bigcup_{n=1}^\infty X_n$, where each $X_n$ is a closed $n$-dimensional subset of $X$ such that 
$X_n\subset\Int X_{n+1}$.
\end{lemma}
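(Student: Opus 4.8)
The plan is to treat the two implications separately; the reverse implication is essentially formal, while the forward one is where dimension theory enters.

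For the ``if'' direction, assume $X=\bigcup_{n}X_n$ with the stated properties and fix $x\in X$. Choosing $n$ with $x\in X_n$, the hypothesis $X_n\subset\Int X_{n+1}$ shows that $\Int X_{n+1}$ is an open neighborhood of $x$ lying inside the finite-dimensional set $X_{n+1}$. As dimension is monotone under passage to subspaces of metrizable spaces, this neighborhood is finite-dimensional, so $X$ is locally finite-dimensional.

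For the converse, I would first repackage the pointwise hypothesis as an increasing sequence of open sets. Let $A_n$ be the set of all points of $X$ admitting an open neighborhood of dimension $\le n$. Then $A_n$ is open (a witnessing neighborhood of a point of $A_n$ is itself contained in $A_n$), the $A_n$ increase with $n$, and $\bigcup_n A_n=X$ precisely because $X$ is locally finite-dimensional. The crucial point is that $\dim A_n\le n$: although $A_n$ is only \emph{locally} at most $n$-dimensional, for metrizable spaces local covering dimension coincides with covering dimension. Concretely, using that the metrizable space $A_n$ is paracompact one passes to a locally finite closed shrinking of its open cover by $\le n$-dimensional sets and applies the locally finite closed sum theorem (see \cite{Nag}, \cite{Sak2}). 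I expect this step to be the main obstacle; everything else is formal.

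It remains to extract from the $A_n$ a sequence of \emph{closed} sets with the required nesting. Fixing a metric $d$ on $X$, I would set $X_n=\{x\in X: d(x,X\setminus A_n)\ge 1/n\}$, with the convention $d(\cdot,\emptyset)=+\infty$ (so $X_n=X$ when $A_n=X$, which is consistent since then $\dim X=\dim A_n\le n$). Each $X_n$ is closed and contained in $A_n$, whence $\dim X_n\le n$. Since $A_n\subset A_{n+1}$ gives $X\setminus A_{n+1}\subset X\setminus A_n$ and hence $d(x,X\setminus A_{n+1})\ge d(x,X\setminus A_n)$, any $x\in X_n$ satisfies $d(x,X\setminus A_{n+1})\ge 1/n>1/(n+1)$, so $x$ lies in the open set $\{d(\cdot,X\setminus A_{n+1})>1/(n+1)\}\subset\Int X_{n+1}$; thus $X_n\subset\Int X_{n+1}$. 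Finally, every $x$ lies in some open $A_m$, so $d(x,X\setminus A_m)>0$ and therefore $x\in X_n$ for all sufficiently large $n$, giving $\bigcup_n X_n=X$. This produces the desired decomposition (reading ``$n$-dimensional'' as ``of dimension at most $n$'').
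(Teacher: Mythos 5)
Your proposal is correct and takes essentially the same route as the paper's proof: both produce an increasing sequence of open subsets of dimension $\le n$ covering $X$ (your $A_n$, the paper's unions $Y_n$ of the $\le n$-dimensional members of an open cover), establish $\dim\le n$ by a local-to-global dimension theorem, and then shrink these open sets to nested closed sets $X_n$ with $X_n\subset\Int X_{n+1}$. The only difference is one of packaging: where the paper cites Sakai's local dimension theorem (5.4.4) and a chain-decomposition lemma from its companion text, you inline their proofs (via the locally finite closed sum theorem and the explicit construction $X_n=\{x: d(x,X\setminus A_n)\ge 1/n\}$, respectively), which is perfectly sound.
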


\begin{proof} 
Assuming that $X$ is finite dimensional, it admits an open cover $C$ whose elements are 
finite-dimensional.
Let $C_n$ be the set of all elements of $C$ that are of dimension $\le n$.
Let $Y_n$ be the union of all elements of $C_n$.
Each $x\in Y_n$ lies in some $U\in C_n$ together with some closed ball neighborhood $B_x$.
Since $B_x$ is a closed subset of $U$, it is easily seen to be of dimension $\le n$.
Hence $\dim Y_n\le n$ (see \cite{Sak2}*{5.4.4}).
We have $Y_1\subset Y_2\subset\dots$ and $X=\bigcup_n Y_n$, where each $Y_n$ is open.
Hence there also exist the desired $X_n$'s (see \cite{M00}*{Lemma \ref{book:chain-decomposition}(a)}).
\end{proof}

\begin{example} Let $K=\Delta^0\sqcup\Delta^1\sqcup\Delta^2\sqcup\dots$, the disjoint union of 
simplexes of increasing dimensions, regarded as a simplicial complex.
Let $Q=K\cup v*L$, where $L=\Delta^0\sqcup\Delta^0\sqcup\Delta^0\sqcup\dots\subset X$.
Clearly, $|Q|$ is locally finite dimensional.
However, $|\st(v,Q)|$ has no finite-dimensional neighborhood in $|Q|$.
\end{example}

\begin{proposition} \label{lfd}
Let $X$ be a $\Delta$-space given by an inverse sequence $\dots\xr{p_1}|K_1|\xr{p_0}|K_0|$ 
with surjective bonding maps.
The following are equivalent:
\begin{enumerate} 
\item $X$ is locally finite-dimensional;
\item $X$ is a union of its finite-dimensional subsets $X_0\subset X_1\subset\dots$ such that 
each $X\but X_n$ is a $\Delta$-subspace of $X$; 
\item $X$ is a union of its finite-dimensional subsets $X_n\bydef (p^\infty_n)^{-1}(|K_n|\but|L_n|)$, 
where $L_n$ are subcomplexes of $K_n$ satisfying $|L_{n+1}|\subset p_n^{-1}(|L_n|)$.
\item $X$ is a union of its finite-dimensional subsets of the form $(p^\infty_n)^{-1}(|K_n|\but|L|)$, where 
$|L|$ is a subcomplex of $|K_n|$.
\end{enumerate}
\end{proposition}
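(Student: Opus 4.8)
The plan is to establish the cycle (1)$\Rightarrow$(3)$\Rightarrow$(2)$\Rightarrow$(1) together with (3)$\Rightarrow$(4)$\Rightarrow$(1), so that all four conditions become equivalent; every implication except (1)$\Rightarrow$(3) is routine. Indeed (3)$\Rightarrow$(4) is immediate, since the sets $X_n$ in (3) already have the form required in (4). For the implications into (1), note that in (2) each $X\but X_n$ is a $\Delta$-subspace, hence closed, while in (4) each $|K_n|\but|L|$ is open; so in both cases $X$ is covered by finite-dimensional \emph{open} sets $X_n$, whence $X$ is locally finite-dimensional. Finally, for (3)$\Rightarrow$(2) I would check that $(p^\infty_n)^{-1}(|L_n|)$ is a closed union of simplexes and Hilbert simplexes of $X$ (a point whose carrier at level $n$ lies in $L_n$ drags its whole simplex into the preimage), hence a $\Delta$-subspace, and that the compatibility $|L_{n+1}|\subset p_n^{-1}(|L_n|)$ gives $(p^\infty_{n+1})^{-1}(|L_{n+1}|)\subset(p^\infty_n)^{-1}(|L_n|)$, i.e.\ the $X_n$ are increasing.

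So the whole content is (1)$\Rightarrow$(3). First I would note that a locally finite-dimensional $X$ contains no Hilbert simplex: by Lemma \ref{infty-simplex} a Hilbert simplex would contain a Hilbert cube, no point of which has a finite-dimensional neighborhood; thus every simplex of $X$ is finite-dimensional. Write $\tau_i(x)$ for the carrier of $x_i$ in $K_i$, so that $p_i(\tau_{i+1}(x))=\tau_i(x)$, and for $\tau\in K_n$ set $U^\tau_n\bydef(p^\infty_n)^{-1}(\ost(\tau,K_n))$, the ``star-tube'' of $\tau$. I would then define
\[ L_n\bydef\{\tau\in K_n:\dim U^\tau_n>n\}. \]
Since the open star of a face contains that of the simplex, $U^\rho_n\supset U^\tau_n$ for $\rho\le\tau$, so $L_n$ is a subcomplex. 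As $p_n$ carries $\ost(\tau,K_{n+1})$ into $\ost(p_n\tau,K_n)$ one gets $U^{p_n\tau}_n\supset U^\tau_{n+1}$, whence $\dim U^\tau_{n+1}>n+1$ forces $\dim U^{p_n\tau}_n>n$; this yields the compatibility $p_n(|L_{n+1}|)\subset|L_n|$. Each $X_n=(p^\infty_n)^{-1}(|K_n|\but|L_n|)$ is finite-dimensional: every $y\in X_n$ satisfies $\tau_n(y)\notin L_n$ and lies in the open set $U^{\tau_n(y)}_n$ of dimension $\le n$, so $X_n$ is locally of dimension $\le n$, and hence $\dim X_n\le n$ by the local character of covering dimension for metrizable spaces \cite{Sak2}.

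It remains to prove exhaustion, $X=\bigcup_n X_n$, and this is the hard part. Because $U^{\tau_{n+1}(x)}_{n+1}\subset U^{\tau_n(x)}_n$, the dimensions $a_n\bydef\dim U^{\tau_n(x)}_n$ are non-increasing, so it suffices to show that for each $x$ some $a_n$ is finite: then $a_n$ is eventually a finite constant $c$, and $x\in X_n$ once $n\ge c$. I would argue by contradiction, assuming $a_n=\infty$ for all $n$. Since every simplex of $X$ is now finite-dimensional, $\dim U^{\tau_k(x)}_k=\infty$ produces, for each $k$, a simplex $S$ of $X$ with $\dim S\ge k$ and $\operatorname{int}S\subset U^{\tau_k(x)}_k$; its level-$k$ image $\sigma\bydef p^\infty_k(S)$ has $\tau_k(x)$ as a face, so $x_k\in\sigma=\overline{\operatorname{int}\sigma}$. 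Fixing a metric $\rho_i$ on each $|K_i|$, I would choose $z^{(k)}\in\operatorname{int}\sigma$ so close to $x_k$ that the finitely many projections satisfy $\rho_i(p^i_k(z^{(k)}),x_i)<1/k$ for $i\le k$ (possible by continuity of each $p^i_k$ at $x_k$), and pick $y^{(k)}\in\operatorname{int}S$ over $z^{(k)}$. Then $y^{(k)}$ lies in the interior of the $\ge k$-dimensional simplex $S$, while $y^{(k)}_i=p^i_k(z^{(k)})\to x_i$ for each fixed $i$; as the limit topology is that of coordinatewise convergence, $y^{(k)}\to x$. Consequently every neighborhood of $x$ contains a relatively open piece of a $\ge k$-dimensional simplex for all large $k$, so it is infinite-dimensional, contradicting (1). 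This accumulation argument—sliding a high-dimensional simplex of the star-tube over a point near $x_k$ and threading it back to $X$—is the main obstacle, and it is precisely where local finite-dimensionality is used.
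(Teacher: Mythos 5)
Your overall scheme is fine and genuinely different from the paper's (the paper proves (3)$\Rightarrow$(2)$\Rightarrow$(1), (1)$\Rightarrow$(4), (4)$\Rightarrow$(3), whereas you run (1)$\Rightarrow$(3)$\Rightarrow$(2)$\Rightarrow$(1) plus (3)$\Rightarrow$(4)$\Rightarrow$(1)), and the routine parts all check out: the $X_n$ in (2) and (4) are open and finite-dimensional, hence give (1); your carrier argument does show $(p^\infty_n)^{-1}(|L_n|)$ is a $\Delta$-subspace; and in (1)$\Rightarrow$(3) the definition of $L_n$ via star-tubes, the subcomplex property, the compatibility $p_n(|L_{n+1}|)\subset|L_n|$, the bound $\dim X_n\le n$ via local character of dimension, and the reduction of exhaustion to ``some $a_n$ is finite'' are all correct. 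The problem is the step you yourself call the main obstacle: the claim that $\dim U^{\tau_k(x)}_k=\infty$ ``produces'' a simplex $S$ of $X$ with $\dim S\ge k$ and $\Int S\subset U^{\tau_k(x)}_k$, justified only by ``every simplex of $X$ is finite-dimensional''. That justification does not address the issue at all.

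What the step actually requires is its contrapositive: a subset of $X$ that is a union of interiors of simplexes of $X$ of dimension $<k$ --- i.e.\ a subset of the $(k-1)$-skeleton $X^{(k-1)}$ --- is finite-dimensional. This is not formal: the union is typically uncountable, its pieces are neither open nor closed in $X$, and the inverse-limit theorem for covering dimension is a theorem about \emph{compact} spaces, so for the non-compact limits at hand the finite-dimensionality of $X^{(k-1)}=\lim|K_i^{(k-1)}|$ has to be extracted from the simplicial structure (e.g.\ by showing that the part of $X^{(k)}$ lying over the interior of a stabilized $k$-simplex is homeomorphic to an open $k$-cell times a $0$-dimensional space, and then invoking the local character of dimension and the countable sum theorem). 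Ruling out precisely the phenomenon ``a union of low-dimensional simplexes is infinite-dimensional'' is the analytic content of the implication you are proving, so as written your argument assumes a statement of essentially the same depth as its conclusion. The paper avoids this circle by a different mechanism in its (1)$\Rightarrow$(4): a finite-dimensional neighborhood of a vertex $v$ contains $(p^\infty_n)^{-1}(\bar W_r)$ for a scaled-down closed star $\bar W_r$ of $v_n$, which is finite-dimensional for free (a closed subset of a finite-dimensional set), and simpliciality of the bonding maps then transfers the bound to the whole star-tube $(p^\infty_n)^{-1}\big(\ost(v_n,K_n)\big)$ --- no contradiction-by-accumulation and no skeleton lemma needed. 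Your route can be completed: the skeleton fact is true (and the paper itself treats it as clear in the implication (4)$\Rightarrow$(3) of its Proposition on countably dimensional $\Delta$-spaces in \S\ref{topology}), and once it is supplied, your accumulation argument --- the points $y^{(k)}\to x$ lying in interiors of simplexes of dimension $\ge k$, forcing every neighborhood of $x$ to be infinite-dimensional --- is correct. But as it stands, the key step is a gap, not a proof.
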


\begin{proof} Clearly (3)$\Rightarrow$(2)$\Rightarrow$(1).

Let us prove (1)$\Rightarrow$(4).
Clearly, $X$ contains no Hilbert simpexes.
Let $\sigma$ be a simplex of $X$.
Let $v$ be a vertex of $\sigma$ and let $U$ be its finite-dimensional neighborhood.
Then $U$ contains a neighborhood of the form $(p^\infty_n)^{-1}(V)$, where $V$ is a neighborhood of 
$v_n\bydef p^\infty_n(v)$ in $|K_n|$ for some $n$.
In turn, $V$ contains a neighborhood of the form $W_r\bydef h^{v_n}_r\big(\ost(v_n,\,K_n)\big)$ 
(see \cite{M00}*{Theorem \ref{book:open star}}).
By decreasing $r$ we may assume that $V$ contains the closure $\bar W_r$ of $W_r$.
Then $(p^\infty_n)^{-1}(\bar W_r)$ is a closed subset of $U$, so it is finite-dimensional.
Since the bonding maps are simplicial, it follows that $(p^\infty_n)^{-1}\big(\ost(v_n,\,K_n)\big)$
is also of some finite dimension $d$.
Since the bonding maps are surjective, $\dim (p^i_n)^{-1}\big(\ost(v_n,\,K_n)\big)\le d$ 
for all finite $i\ge n$.
This applies to each vertex $v$ of $\sigma$; since there are only finitely many of them, 
there is a single $n$ and a single $d$ that work for all of them.
Then $\dim (p^i_n)^{-1}\big(\bigcup_{v\in\sigma}\ost(v_n,\,K_n)\big)\le d$.
But $\bigcup_{v\in\sigma}\ost(v_n,\,K_n)=|K_n|\but |L|$, where $L$ is the subcomplex of $K_n$
consisting of all its simplexes that are disjoint from $\sigma_n\bydef p^\infty_n(\sigma)$.
Since $\sigma_n\subset|K_n|\but |L|$, we have $\sigma\subset (p^\infty_n)^{-1}(|K_n|\but|L|)$.

Let us prove (4)$\Rightarrow$(3).
Let $C_n$ be the set of all subsets of $X$ that are of dimension $\le n$
and of the form $(p^\infty_n)^{-1}(|K_n|\but |L|)$ for some subcomplex $L$ of $K_n$.
Then $C_0\subset C_1\subset\dots$ and $\bigcup_n C_n$ is a cover of $X$.
Since the bonding maps are surjective, $\dim(p^\infty_n)^{-1}(|K_n|\but |L|)\le n$ 
if and only if $\dim(p^i_n)^{-1}(|K_n|\but |L|)\le n$ for each finite $i\ge n$.
Therefore the union $X_n$ of all elements of $C_n$ is of dimension $\le n$.
Also $X_n=(p^\infty_n)^{-1}(|K_n|\but |L_n|)$, where $L_n$ is a subcomplex of $K_n$, 
namely, the intersection of all $L$'s as above.
Clearly, $p_n^{-1}(|K_n|\but |L_n|)\subset |K_{n+1}|\but |L_{n+1}|$.
\end{proof}

\section{Special $\Delta$-spaces} \label{special}

\subsection{$\nabla$-spaces}

We recall that a {\it $\nabla$-space} is the limit of an inverse sequence of non-degenerate
simplicial maps between metric simplicial complexes.

\begin{example} $\nabla$-spaces include all $0$-dim\-ensional metrizable spaces and the 
$p$-adic solenoid.
Let us note that the Cantor set admits very different choices of the structure of a $\Delta$-space 
(``$p$-adic'' for different $p$ and more generally ``$\ell$-adic'').

In fact, these examples of $\nabla$-spaces are rather special: they all are inverse limits of
simplicial immersions%
\footnote{A continuous map is called a (topological) {\it immersion} if it embeds some neighborhood 
of every point.}
between metric simplicial complexes.
\end{example}

\begin{example}
(a) The ``topologist's sine curve'' $X$ is a $\nabla$-space.
Indeed, $X$ is clearly the limit of the inverse sequence $\dots\xr{f_1}|J_1|\xr{f_0}|J_0|$ of 
simplicial maps between polygonal arcs $J_i$ with $i$ segments, where each $f_i$ is the retraction 
of $J_{i+1}$ onto $J_i$ sending the $(i+1)$st segment $I_{i+1}$ onto the $i$th one.

(b) The ``comb and flea'' space $\{\frac1n\mid n\in\N\}\x[0,1]\cup(0,1]\x\{1\}\cup\{(0,0)\}$ is 
a $\nabla$-space.
Indeed, $X$ is clearly the limit of the inverse sequence $\dots\xr{f_1}|G_1|\xr{f_0}|G_0|$ of 
simplicial maps between the graphs $G_n=[0,\infty)\x\{0\}\cup(\N\x[0,1])/(\{n,n+1,\dots\}\x\{1\})$, 
where the quotient is understood combinatorially (its topology is that of the metric simplicial complex, 
and not the quotient topology).
\end{example}

\begin{example}
Another notable example of a $\nabla$-space is the classifying space $B\Z_p$ of the $p$-adic integers, 
given by an inverse sequence of ramified coverings between the infinite-dimensional lens spaces 
$\dots\to L^\infty(p^2)\to L^\infty(p)$.
Namely, let $\Z/p^k$ act freely on $S^1$ in the usual way and diagonally on 
$S^\infty=\bigcup_i(\underbrace{S^1*\dots*S^1}_i)$. 
Then $L^\infty(p^k)$ is the orbit space of the latter action, and it is easy to see that the ramified covering
$S^\infty\to S^\infty$, defined as the join of the $p$-fold covers $S^1\to S^1$, descends to
a ramified covering $L^\infty(p^{k+1})\to L^\infty(p^k)$. 
\end{example}

\begin{example} $[0,1]$ with vertices at $1,\frac12,\frac14,\dots$ and $0$, and with edges of the form
$[2^{-i-1},2^{-i}]$ is a $\Delta$-space which is not a $\nabla$-space.
It is the inverse limit of the retractions $\dots\to[0,3]\to[0,2]\to[0,1]$ collapsing 
the leftmost edge $[i-1,i]$ onto its right endpoint $\{i-1\}$.
\end{example}

\begin{example} \label{Hawaiian}
The $n$-dimensional Hawaiian earring space $H_n$, i.e.\ the metric wedge $\bigvee_{i=1}^\infty S^n$
of copies of $S^n$ (see \cite{M00}*{\S\ref{book:metric wedge}}), is a $\Delta$-space which is not a $\nabla$-space. 
It is the inverse limit of the retractions
$\dots\to S^n\vee S^n\vee S^n\to S^n\vee S^n\to S^n$ between the finite wedges.
\end{example}

\subsection{Normal $\Delta$-subspaces}

\begin{lemma} \label{inverse subdivision}
Let $\dots\xr{p_1}K_1\xr{p_0}K_0$ be an inverse sequence of simplicial maps between simplicial complexes.
Then their exist admissible subdivisions $K_i'$ of $K_i$, isomorphic to their barycentric subdivisions
and such that each $p_i\:K_{i+1}'\to K_i'$ is simplicial.
Moreover, $\lim|K_i'|$ is homeomorphic to $\lim|K_i|$ via the identity map.
\end{lemma}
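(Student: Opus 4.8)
The plan is to replace the barycentric subdivision by a \emph{derived} subdivision in which the added vertices are not the barycenters but suitably chosen interior points of the simplexes. Recall that if one picks, for each simplex $\sigma$ of a complex $K$, an arbitrary point $b_\sigma\in\Int\sigma$, then the collection of all simplexes $[b_{\sigma_0},\dots,b_{\sigma_k}]$ ranging over chains of faces $\sigma_0<\dots<\sigma_k$ is an admissible subdivision $K'$ of $K$, and $b_\sigma\mapsto\hat\sigma$ is an isomorphism $K'\cong K^\flat$ onto the barycentric subdivision (see \cite{M00}). The points $b_{\sigma_j}$ are affinely independent because $b_{\sigma_j}\in\Int\sigma_j$ lies off the affine span of the proper face $\sigma_{j-1}$, which already contains $b_{\sigma_0},\dots,b_{\sigma_{j-1}}$. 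Thus the only freedom, and the only thing to arrange, is the placement of the interior points; I want to choose them so that each $p_i$ becomes simplicial.

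The reason the usual barycentric subdivision is inadequate is that a \emph{degenerate} simplicial map does not carry barycenters to barycenters: if $p_i$ collapses $\sigma$, then $p_i(\hat\sigma)$ is the average of the images of the vertices of $\sigma$, which is generally not the barycenter of $p_i(\sigma)$. I claim, however, that choosing interior points $b_\sigma$ (for all simplexes of all $K_i$) subject to the single family of equations
\[
p_i(b_\sigma)=b_{p_i(\sigma)}\qquad(\sigma\in K_{i+1},\ i\ge0)
\]
already makes every $p_i\:K'_{i+1}\to K'_i$ simplicial. Indeed, a simplex of $K'_{i+1}$ has the form $[b_{\sigma_0},\dots,b_{\sigma_k}]$ with $\sigma_0<\dots<\sigma_k$, and it is contained in $\sigma_k$, on which $p_i$ is affine; by the displayed equations its vertices go to $b_{p_i(\sigma_0)},\dots,b_{p_i(\sigma_k)}$. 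Since $p_i$ preserves face inclusions, $p_i(\sigma_0)\subseteq\dots\subseteq p_i(\sigma_k)$, so the distinct ones among these images form a chain in $K_i$ and hence span a simplex of $K'_i$; thus $p_i$ maps our simplex affinely onto a simplex of $K'_i$, as required.

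It remains to produce interior points satisfying the displayed equations, and I do this by induction on $i$, which is exactly the direction that makes the coherence work out. Choose the $b_\sigma$ for $\sigma\in K_0$ arbitrarily in $\Int\sigma$. Suppose the points for $K_i$ have been chosen. For a simplex $\sigma\in K_{i+1}$ with $\tau=p_i(\sigma)$, the restriction $p_i|_\sigma\:\sigma\to\tau$ is an affine surjection, and such a map sends $\Int\sigma$ onto $\Int\tau$: a point of $\sigma$ has all barycentric coordinates positive iff so does its image, whose $\tau$-coordinates are the partial sums of the coordinates over the fibers of the vertex map. Hence $b_\tau\in\Int\tau$ has a preimage in $\Int\sigma$; pick any such point to be $b_\sigma$. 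This choice satisfies $p_i(b_\sigma)=b_\tau=b_{p_i(\sigma)}$ and keeps $b_\sigma\in\Int\sigma$, completing the induction. Note that distinct simplexes $\sigma,\sigma'$ of $K_{i+1}$ with the same image $\tau$ receive points in their disjoint interiors, both mapping to $b_\tau$; this is harmless, since a simplicial map need not be injective on vertices.

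Finally, each subdivision leaves the underlying metric space unchanged, $|K'_i|=|K_i|$, and the bonding maps $p_i$ are literally the same maps, merely reinterpreted as simplicial with respect to the new structures. Therefore $\lim|K'_i|$ and $\lim|K_i|$ are the same set of threads with the same topology, and the identity is the asserted homeomorphism. The one genuine obstacle is the coherence of the interior points along the whole inverse sequence; it is overcome by the observation that affine surjections of simplexes map interiors onto interiors, which lets the choices be propagated from $K_0$ upward.
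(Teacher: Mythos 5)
Your proof is correct and takes essentially the same approach as the paper: choosing interior points $b_\sigma$ subject to $p_i(b_\sigma)=b_{p_i(\sigma)}$, inductively from $K_0$ upward using the fact that a simplicial surjection maps interiors onto interiors, is exactly the paper's construction of apexed simplicial complexes $K_i^+$ with each $p_i$ a conical map, and your derived subdivision is precisely the paper's $(K_i^+)^\flat$. The only difference is that you verify by hand (affine independence along chains, chains mapping to chains) what the paper outsources to the cited lemmas of \cite{M00}.
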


\begin{proof}
If each $p_i$ is non-degenerate, then it sends barycenters to barycenters, and in this case we may define
$K_i'$ to be the barycentric subdivision $K_i^\flat$ for all $i$, which is admissible 
(see \cite{M00}*{Lemma \ref{book:barycentric-admissible}}).
In the general case, let $K_0^+=K_0$, viewed as an apexed simplicial complex with apexes of simplexes 
chosen to be at their barycenters.
Assuming that $K_i^+$ is defined, let $K_{i+1}^+$ be an apexed simplicial complex such that 
$p_i\:K_{i+1}^+\to K_i^+$ is a conical map (see \cite{M00}*{\S\ref{book:polytopal complexes}}).
Let $K_i'=(K_i^+)^\flat$, which is an admissible subdivision of $K_i$ 
\cite{M00}*{Lemma \ref{book:map-subdivision}}.
Then each $p_i\:K_{i+1}'\to K_i'$ is simplicial.

Since each $K'_i$ admissibly subdivides $K_i$, the map $\id\:|K_i'|\to|K_i|$ is a homeomorphism.
Hence the identity map between the inverse limits is also a homeomorphism.
\end{proof}

A $\Delta$-subspace $Y$ of a $\Delta$-space $X$ will be called {\it normal} if there exists a neighborhood $U$
of $Y$ such that if a simplex or Hilbert simplex of $X$ lies in $U$, then it lies in $Y$.

\begin{proposition} \label{normality}
Let $X$ be a $\Delta$-space given by an inverse sequence 
$\dots\xr{p_1}K_1\xr{p_0}K_0$.
Lemma \ref{inverse subdivision} yields an inverse sequence $\dots\xr{p_1}K_1'\xr{p_0}K_0'$
which represents some $\Delta$-space $X'$. (Thus $X'=X$ as spaces.)
The following are equivalent:
\begin{enumerate}
\item $Y$ is a normal $\Delta$-subspace of $X$;
\item there exists a neighborhood $U$ of $Y$ such that $U\but Y$ contains no vertices of $X'$;
\item $Y=\bigcup_n(p^\infty_n)^{-1}(|L_n|)$ for some subcomplexes $L_n$ of $K_n$.
\end{enumerate}
\end{proposition}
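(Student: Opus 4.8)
The plan is to read the statement with $Y$ ranging over $\Delta$-subspaces of $X$ (as in the definition preceding it), so that $Y$ is closed and is a union of simplexes and Hilbert simplexes, and to prove the cycle (1)$\Rightarrow$(3)$\Rightarrow$(2)$\Rightarrow$(1). The whole argument rests on one observation about the subdivided structure $X'$ of Lemma \ref{inverse subdivision}. Since the maps $K_{i+1}^+\to K_i^+$ are conical, they carry apexes to apexes, i.e.\ barycenters to barycenters; hence the thread of barycenters $\hat\sigma=(\hat\sigma_0,\hat\sigma_1,\dots)$ of a simplex or Hilbert simplex $\sigma=\lim\sigma_i$ of $X$ is a vertex of $X'$, and conversely every vertex of $X'$ arises this way. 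Thus \emph{the vertices of $X'$ are precisely the barycenters of the simplexes and Hilbert simplexes of $X$}. Since the carrier of $\hat\sigma$ in $X$ is $\sigma$, and $Y$ contains the carrier of each of its points, this gives the key equivalence $\hat\sigma\in Y\iff\sigma\subseteq Y$ for every simplex or Hilbert simplex $\sigma$.

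Granting this, (2)$\Rightarrow$(1) is immediate: if $U\setminus Y$ contains no vertex of $X'$ and a simplex or Hilbert simplex $\sigma$ lies in $U$, then its barycenter $\hat\sigma\in U$ is a vertex of $X'$, so $\hat\sigma\in Y$ and hence $\sigma\subseteq Y$; thus $U$ witnesses normality. For (3)$\Rightarrow$(2), given $Y=\bigcup_n(p^\infty_n)^{-1}(|L_n|)$ I would set $O_n=\bigcup_{\rho\in L_n}\ost(\hat\rho,K_n')$, an open subset of $K_n$: the barycenters $\hat\rho$ with $\rho\in L_n$ span a subcomplex of $K_n'$ whose open stars cover $|L_n|$, while each such star contains no vertex of $K_n'$ other than $\hat\rho$. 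Then $U=\bigcup_n(p^\infty_n)^{-1}(O_n)$ is an open neighborhood of $Y$, and if a vertex $\hat\tau$ of $X'$ lies in $U$, say $\hat\tau_n\in O_n$, then the vertex $\hat\tau_n$ of $K_n'$ must equal some $\hat\rho$ with $\rho\in L_n$, forcing $\tau_n\in L_n$ and hence $\hat\tau\in(p^\infty_n)^{-1}(|L_n|)\subseteq Y$. So $U\setminus Y$ contains no vertex of $X'$.

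The implication (1)$\Rightarrow$(3) is where the real work lies. I would let $L_n$ be the set of simplexes $\rho$ of $K_n$ with $(p^\infty_n)^{-1}(\rho)\subseteq Y$; this is closed under faces, so it is a subcomplex, and automatically $\bigcup_n(p^\infty_n)^{-1}(|L_n|)\subseteq Y$. The content is the reverse inclusion. Fix $x\in Y$, let $U$ witness normality, and let $\sigma=\lim\rho_n$ be the carrier of $x$, where $\rho_n$ is the carrier of $x_n$; as an inverse limit of closed simplexes (with surjective restrictions, as in Lemma \ref{surjective}(b)) it is compact, and $\sigma\subseteq Y\subseteq U$. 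The crux is to produce a single level $N$ with $(p^\infty_N)^{-1}(\rho_N)\subseteq U$: I would cover the compactum $\sigma$ by basic open sets $(p^\infty_{n(s)})^{-1}(V_s)\subseteq U$, take a finite subcover, and push it up to a common level $N$, obtaining an open $W\subseteq K_N$ with $\rho_N=p^\infty_N(\sigma)\subseteq W$ and $(p^\infty_N)^{-1}(W)\subseteq U$, whence $(p^\infty_N)^{-1}(\rho_N)\subseteq U$. This preimage is a union of simplexes and Hilbert simplexes, each of which lies in $U$ and hence, by normality, in $Y$; therefore $(p^\infty_N)^{-1}(\rho_N)\subseteq Y$, i.e.\ $\rho_N\in L_N$ and $x\in(p^\infty_N)^{-1}(|L_N|)$.

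The main obstacle is exactly this compactness step. The fibers $(p^\infty_n)^{-1}(\rho_n)$ decrease to $\sigma$ but need not be compact when $X$ is non-compact, so one cannot argue directly that they eventually enter $U$; the resolution is to use compactness of the carrier $\sigma$ of the \emph{single} point $x$, together with the inverse-limit neighborhood basis, to pass to one finite level $N$. I would also stress that the closedness of $Y$ built into ``$\Delta$-subspace'' is genuinely used in (2)$\Rightarrow$(1): conditions (2) and (3) can hold verbatim for non-closed unions of vertices (e.g.\ the non-limit vertices of Example with vertices at $1,\tfrac12,\tfrac14,\dots$), which are not normal $\Delta$-subspaces, so the hypothesis that $Y$ is a $\Delta$-subspace cannot be dropped.
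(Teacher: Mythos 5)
Your proof is correct, and two of its three implications coincide with the paper's own argument: the paper's (2)$\Rightarrow$(1) is exactly your key observation (every simplex or Hilbert simplex of $X$ contains in its interior a vertex of $X'$, namely the thread of apexes, which threads up because the maps $K_{i+1}^+\to K_i^+$ are conical), and the paper's (3)$\Rightarrow$(2) builds the same neighborhood as yours, described there as $\bigcup_n(p^\infty_n)^{-1}\bigl(|K_n|\setminus|L_n^*|\bigr)$ where $L_n^*$ is the subcomplex of $K_n'$ of simplexes disjoint from $|L_n|$ --- which is set-theoretically the same as your union of open stars $O_n$. One terminological slip: for $i\ge 1$ the apexes of $K_i^+$ are \emph{not} barycenters in general (only those of $K_0^+$ are placed at barycenters; the rest are dictated by conicality of the $p_i$), so ``barycenter threads'' should read ``apex threads''; this is harmless, since your argument only uses that apexes are interior points carried to apexes. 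Where you genuinely diverge is (1)$\Rightarrow$(3), which you rightly identify as the substantive implication. The paper reads the subcomplexes off the normality neighborhood: writing the open $U$ as an increasing union of cylinder sets $U_n=(p^\infty_n)^{-1}(V_n)$ (your ``push to a common level'' appears there as replacing $V_n$ by $(p^n_0)^{-1}(V_0)\cup\dots\cup V_n$), it sets $L_n$ to be the simplexes of $K_n$ lying in $V_n$, then proves $Y=Z\bydef\bigcup_n(p^\infty_n)^{-1}(|L_n|)$ by applying compactness to each simplex of $Y$ (to land it in some $U_n$) and normality once, to $Z$. You instead take the maximal choice $L_n=\{\rho : (p^\infty_n)^{-1}(\rho)\subseteq Y\}$ and verify membership pointwise, applying compactness to the carrier of each point and normality to the full preimage $(p^\infty_N)^{-1}(\rho_N)$; this costs the extra (correct) verification that such a preimage is itself a union of simplexes and Hilbert simplexes of $X$, but buys subcomplexes $L_n$ that are canonical, i.e.\ independent of the choice of $U$. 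Finally, your closing remark --- that closedness of $Y$ (built into ``$\Delta$-subspace'') is genuinely needed for (2),(3)$\Rightarrow$(1), as shown by the non-closed set of non-limit vertices in the $[0,1]$ example --- is correct and worth making explicit; the paper leaves this reading of the quantification over $Y$ implicit.
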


\begin{proof} (3)$\Rightarrow$(2).
Let $L_n^*$ be the subcomplex of $K_n'$ consisting of all simplexes that are disjoint from $|L_n|$.
Then $|L_n^*|$ is closed in $|K_n'|=|K_n|$.
Therefore $U_n\bydef (p^\infty_n)^{-1}(|K_n|\but|L_n^*|)$ is a neighborhood of $Y_n\bydef (p^\infty_n)^{-1}(|L_n|)$.
Hence $U\bydef \bigcup_n U_n$ is a neighborhood of $Y=\bigcup_n Y_n$.
If $v$ is a vertex of $X'$ that is contained in $U$, then $v$ must lie in some $U_n$.
Hence $v_n\bydef p^\infty_n(v)$ is a vertex of $K_n'$ not contained in $|L_n^*|$.
Then $v_n\in |L_n|$.
Hence $v\in Y_n\subset Y$.

(2)$\Rightarrow$(1).
It suffices to show that every simplex $\sigma$ of $X$ contains a vertex of $X'$.
Indeed, let $v_i$ be the apex of $\sigma_i\bydef p^\infty_i(\sigma)$ in the apexed simplicial complex
$K_i^+$ (see the proof of Lemma \ref{inverse subdivision}).
Then each $p_i(v_{i+1})=v_i$, and so $v\bydef (v_0,v_1,\dots)$ is a vertex of $X'$.

(1)$\Rightarrow$(3).
Suppose that $U$ is a neighborhood of $Y$ in $X$ which contains no simplexes and
Hilbert simplexes of $X$ other than those of $Y$.
We may assume that $U$ is open.
Then $U$ is a union of sets of the form $(p^\infty_n)^{-1}(V)$, where $V$ is an open subset 
of $|K_i|$.
For each $n$ there may be many such $V$'s, but by taking their union we may assume that 
there is only one; let us denote it $V_n$.
Thus $U=\bigcup_n U_n$, where $U_n=(p^\infty_n)^{-1}(V_n)$.
By replacing each $V_n$ with $(p^n_0)^{-1}(V_0)\cup(p^n_1)^{-1}(V_1)\cup\dots\cup V_n$ 
we may assume that each $p_n^{-1}(V_n)\subset V_{n+1}$.
Hence also $U_0\subset U_1\subset\dots$.
Let $L_n$ be the subcomplex of $K_n$ consisting of all its simplexes that lie in $V_n$.
Thus $p_n^{-1}(|L_n|)\subset |L_{n+1}|$.
Let $Z=\bigcup_n(p^\infty_n)^{-1}(|L_n|)$.
Then $Z$ is a $\Delta$-subspace of $X$ which lies in $U$, so by our hypothesis every 
simplex of $Z$ lies in $Y$.
Thus $Z\subset Y$.
On the other hand, if $\sigma$ is a simplex of $Y$, then, being compact, $\sigma$ 
must lie in some $U_n$.
Then $\sigma_n\bydef p^\infty_n(\sigma)$ lies in $V_n$, and hence, being a simplex of $K_n$,
also in $|L_n|$.
Therefore $\sigma$ lies in $Z$.
Thus $Y\subset Z$.
\end{proof}

\subsection{Decomposable $\Delta$-spaces}

Clearly, the finite-dimensional skeleta of a $\nabla$-space are its normal $\Delta$-subspaces.
More generally, we will call a $\Delta$-space {\it decomposable} if it is a union of its finite-dimensional 
normal $\Delta$-subspaces.

\begin{example} \label{Hawaiian2}
Let $H_n$ be the $n$-dimensional Hawaiian earring space (see Example \ref{Hawaiian2}), 
but now regarded as a subset of $|\Delta^{n+1}|$.
Let us fix any sequence of integers $n_1,n_2,\dots$, and let 
$X=\bigcup_{i=1}^\infty H_{n_1}*\dots*H_{n_i}$, regarded as a subspace of the metric simplicial complex
$|\bigcup_{i=1}^\infty\Delta^{n_1+1}*\dots*\Delta^{n_i+1}|$.
Then $X$ is a decomposable $\Delta$-space which is not a $\nabla$-space.
\end{example}

\begin{example}
(a) The null-sequence $X=\nullseq_{n=1}^\infty\Delta^n$ of simplexes of increasing dimensions 
(see \cite{M00}*{\S\ref{book:metric wedge}}) is a strongly countably dimensional but non-decomposable
$\Delta$-space.
It is the inverse limit of the retractions
$\dots\to pt\sqcup\Delta^3\sqcup\Delta^2\sqcup\Delta^1\to pt\sqcup\Delta^2\sqcup\Delta^1\to pt\sqcup\Delta^1$
sending the top-dimensional simplex onto $pt$.

(b) The metric cone $CX$ (see \cite{M00}*{\S\ref{book:metric cone}}), where $X$ is the null-sequence of (a), 
is another example of a strongly countably dimensional but non-decomposable $\Delta$-space.
Let us note that the barycenters of the maximal simplexes of $CX$ converge to its vertex $pt$.
\end{example}

\begin{proposition} \label{decomposable} 
Let $X$ be a $\Delta$-space given by an inverse sequence $\dots\xr{p_1}K_1\xr{p_0}K_0$ 
with surjective bonding maps.
The following are equivalent:
\begin{enumerate}
\item $X$ is decomposable;
\item $X$ is a union of an increasing sequence $X_0\subset X_1\subset\dots$ of its 
finite-dimensional normal $\Delta$-subspaces;
\item $X$ is a union of its finite-dimensional subspaces $X_n\bydef (p^\infty_n)^{-1}(|L_n|)$, 
where $L_n$ are subcomplexes of $K_n$ satisfying $p_n^{-1}(|L_n|)\subset |L_{n+1}|$.
\end{enumerate}
\end{proposition}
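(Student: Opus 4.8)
The plan is to run the cycle (3)$\Rightarrow$(2)$\Rightarrow$(1)$\Rightarrow$(3), with the first two implications routine and (1)$\Rightarrow$(3) carrying all the weight, in close parallel with the proof of Proposition \ref{lfd}. For (3)$\Rightarrow$(2): each $X_n=(p^\infty_n)^{-1}(|L_n|)$ is a normal $\Delta$-subspace by the equivalence (1)$\Leftrightarrow$(3) of Proposition \ref{normality}, since $(p^\infty_n)^{-1}(|L_n|)=(p^\infty_m)^{-1}\big((p^m_n)^{-1}(|L_n|)\big)$ for every $m\ge n$ and $(p^m_n)^{-1}(|L_n|)$ is a subcomplex of $K_m$; it is finite-dimensional by hypothesis; and the relation $p_n^{-1}(|L_n|)\subset|L_{n+1}|$ gives $X_n=(p^\infty_{n+1})^{-1}\big(p_n^{-1}(|L_n|)\big)\subset(p^\infty_{n+1})^{-1}(|L_{n+1}|)=X_{n+1}$, so the $X_n$ form an increasing sequence. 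Then (2)$\Rightarrow$(1) is immediate, as a union of an increasing sequence of finite-dimensional normal $\Delta$-subspaces is in particular a union of finite-dimensional normal $\Delta$-subspaces.

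For (1)$\Rightarrow$(3), I would first record that a decomposable $X$ contains no Hilbert simplexes. If $\sigma^\infty$ were one, choose $x\in\sigma^\infty$ whose images $x_m\bydef p^\infty_m(x)$ are interior to $\sigma^\infty_m\bydef p^\infty_m(\sigma^\infty)$ for every $m$ (such a thread exists because surjective simplicial maps carry open simplexes onto open simplexes). Now $x$ lies in some finite-dimensional normal $\Delta$-subspace $Y$, and writing $Y=\bigcup_m(p^\infty_m)^{-1}(|M_m|)$ via Proposition \ref{normality}, the fact that $x_m$ is interior to $\sigma^\infty_m$ forces $\sigma^\infty_m\in M_m$ for some $m$, whence $\sigma^\infty\subset Y$; but $\sigma^\infty$ contains a Hilbert cube by Lemma \ref{infty-simplex}, contradicting $\dim Y<\infty$. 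The same interior-point argument applied to an ordinary simplex $\sigma\subset Y$ shows $\sigma\subset(p^\infty_n)^{-1}(|L|)$ for some $n$ and some subcomplex $L$ of $K_n$ with $(p^\infty_n)^{-1}(|L|)\subset Y$ finite-dimensional. Since $X$ is the union of its simplexes and Hilbert simplexes and there are no Hilbert simplexes, every point of $X$ lies in such a set.

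It then remains to assemble these into a single nested family, exactly as in the passage (4)$\Rightarrow$(3) of Proposition \ref{lfd}. Let $C_n$ be the collection of all subsets of $X$ of dimension $\le n$ of the form $(p^\infty_n)^{-1}(|L|)$ for a subcomplex $L$ of $K_n$; the previous paragraph shows $\bigcup_n C_n$ covers $X$, and $C_0\subset C_1\subset\dots$ because $(p^\infty_n)^{-1}(|L|)=(p^\infty_{n+1})^{-1}\big(p_n^{-1}(|L|)\big)$. Let $L_n$ be the union of all subcomplexes $L$ occurring in $C_n$ and $X_n=(p^\infty_n)^{-1}(|L_n|)$. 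Every simplex of the subcomplex $(p^i_n)^{-1}(|L_n|)=\bigcup_L(p^i_n)^{-1}(|L|)$ lies in one constituent $(p^i_n)^{-1}(|L|)$, which has dimension $\le n$; hence $\dim(p^i_n)^{-1}(|L_n|)\le n$ for all finite $i\ge n$, and invoking that surjectivity of the bonding maps makes $\dim(p^\infty_n)^{-1}(|L|)\le n$ equivalent to $\dim(p^i_n)^{-1}(|L|)\le n$ for all finite $i\ge n$ (as already used in Proposition \ref{lfd}) yields $\dim X_n\le n$. Finally $p_n^{-1}(|L_n|)\subset|L_{n+1}|$ holds because $(p^\infty_{n+1})^{-1}\big(p_n^{-1}(|L_n|)\big)=X_n$ has dimension $\le n\le n+1$, so $p_n^{-1}(|L_n|)$ is itself one of the subcomplexes defining $L_{n+1}$.

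The main obstacle is the dimension bookkeeping in the last step: showing that the union $L_n$ of all the individually ``good'' subcomplexes still satisfies $\dim(p^\infty_n)^{-1}(|L_n|)\le n$. This is where surjectivity of the bonding maps is essential — it lets one pass between the infinite inverse limit and its finite approximations $(p^i_n)^{-1}(|L_n|)$, for which the dimension of a union of $\le n$-dimensional subcomplexes is controlled simplex-by-simplex — and it is the reason the hypothesis of surjective bonding maps appears in the statement.
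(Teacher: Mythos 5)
Your proof is correct and takes essentially the same route as the paper's: the implications (3)$\Rightarrow$(2)$\Rightarrow$(1) are routine, and (1)$\Rightarrow$(3) combines Proposition \ref{normality} with the assembly argument of Proposition \ref{lfd}, (4)$\Rightarrow$(3), using unions of subcomplexes in place of intersections and the same surjectivity-based passage between $\dim(p^\infty_n)^{-1}(|L|)$ and $\dim(p^i_n)^{-1}(|L|)$. The only (inessential) difference is your opening interior-point/no-Hilbert-simplex argument: the paper obtains the same covering of $X$ by finite-dimensional sets of the form $(p^\infty_n)^{-1}(|L|)$ more directly, by observing that each piece of the \ref{normality}(3)-decomposition of a finite-dimensional normal $\Delta$-subspace $Y$ is closed in $Y$ and hence finite-dimensional.
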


\begin{proof} Clearly (3)$\Rightarrow$(2)$\Rightarrow$(1).
Let us prove (1)$\Rightarrow$(3).
By Proposition \ref{normality} $X$ is a union of its finite-dimensional subspaces 
that are of the form $(p^\infty_n)^{-1}(|L|)$ for some subcomplex $L$ of $K_n$
(using that a closed subset of a finite dimensional space is finite dimensional).
The remainder of the proof is similar to the proof of the corresponding implication 
in Proposition \ref{lfd} (using union rather than intersection of subcomplexes).
\end{proof}

\subsection{Telescopic resolution}

\begin{proposition} \label{cofibration}
Let $X$ be a $\Delta$-space given by an inverse sequence $\dots\xr{p_1}K_1\xr{p_0}K_0$.
Let $Y=(p^\infty_n)(L_n)$ for some subcomplex $L_n$ of some $K_n$.
Then the inclusion $Y\to X$ is a cofibration.
\end{proposition}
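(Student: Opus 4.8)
The plan is to show that $Y\hookrightarrow X$ is a cofibration by exhibiting $(X\x\{0\})\cup(Y\x I)$ as a retract of $X\x I$, and to obtain that retraction as an inverse limit of retractions constructed one level at a time. First I would normalize the data. Passing to the cofinal tail $\dots\xr{p_{n+1}}K_{n+1}\xr{p_n}K_n$, which has the same inverse limit, I may assume $n=0$, so that $Y=(p^\infty_0)^{-1}(|L_0|)$. For each $i$ let $M_i\subseteq K_i$ be the subcomplex of simplexes whose image under $p^i_0$ is a simplex of $L_0$; then $(p^i_0)^{-1}(|L_0|)=|M_i|$, one has $p_i(M_{i+1})\subseteq M_i$, and $Y=\lim|M_i|$ (a normal $\Delta$-subspace, cf.\ Proposition \ref{normality}). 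Since the product with the compact space $I$ commutes with inverse limits, a direct check gives
\[(X\x\{0\})\cup(Y\x I)=\lim\big(|K_i|\x\{0\}\cup|M_i|\x I\big),\]
the limit being formed along the maps $p_i\x\id$. Hence it suffices to construct, for every $i$, a retraction $\rho_i\:|K_i|\x I\to|K_i|\x\{0\}\cup|M_i|\x I$ that is compatible with the bonding maps, i.e.\ $(p_i\x\id)\circ\rho_{i+1}=\rho_i\circ(p_i\x\id)$; then $\rho_\infty\bydef\lim\rho_i$ is the desired retraction, and its continuity is automatic as an inverse limit of continuous compatible maps.

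I would build each $\rho_i$ one prism at a time. For a simplex $\sigma$ of $K_i$ put $\tau\bydef\sigma\cap|M_i|$, a (possibly empty) face of $\sigma$, and retract the prism $\sigma\x I$ onto $(\sigma\x\{0\})\cup(\tau\x I)$. The crucial feature -- the one that makes the whole scheme work -- is that this prism retraction can be described using only the two barycentric weights $a\bydef\sum_{v\in\tau}s_v$ and $b\bydef 1-a$ of a point $x=\sum_v s_v v$ together with the $I$-coordinate $t$: one fixes a retraction $R=(R_1,R_2)$ of the square $\{(b,t)\}$ onto the union of edges $\{b=0\}\cup\{t=0\}$, rescales the $\tau$-coordinates of $x$ to total weight $1-R_1(b,t)$ and the off-$\tau$-coordinates to total weight $R_1(b,t)$, and sets the new height to be $R_2(b,t)$. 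Because a simplicial map carries the weight on $\tau$-vertices onto the weight on the image face and is affine on each simplex, it preserves $a$ and commutes with such rescalings; therefore the $\rho_i$ so defined satisfy $(p_i\x\id)\circ\rho_{i+1}=\rho_i\circ(p_i\x\id)$ \emph{even when the bonding maps are degenerate}. The same computation shows that the prism formulas agree on common faces, so each $\rho_i$ is well defined on all of $|K_i|\x I$.

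The step I expect to be the main obstacle is the continuity of $\rho_i$ -- not the compatibility, which the weight description hands to us for free. Two points need care. First, on the face of $\sigma$ opposite to $\tau$ the $\tau$-weight vanishes ($a=0$), and there one cannot create $\tau$-weight by rescaling; so $R$ must be chosen to send the edge $\{b=1\}$ to the corner $(b,t)=(1,0)$ and to approach that corner from the interior at a matching rate, keeping the rescaling factors bounded. This forces the opposite face to be pushed straight down onto $\sigma\x\{0\}$, which is exactly what is needed. Second, one must verify that the single barycentric-coordinate formula is continuous across simplexes of different dimensions in the metric topology of $|K_i|$; this follows because $a$, $b$, and the rescalings are given by the continuous barycentric-coordinate functions of the metric simplicial complex. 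Granting these routine verifications, $\rho_\infty=\lim\rho_i$ retracts $X\x I$ onto $(X\x\{0\})\cup(Y\x I)$, and the inclusion $Y\to X$ is a cofibration. (If one prefers to avoid the delicate behaviour of $R$ near the corner, one may instead first apply Lemma \ref{inverse subdivision} to replace the $K_i$ by barycentric-type subdivisions on which the bonding maps are simplicial and respect a coherent system of apexes, and cone each prism from the apex lying over the face opposite to $\tau$; the apexes then supply compatible projection centres directly.)
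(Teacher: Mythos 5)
Your reduction to $n=0$, the identification $(p^i_0)^{-1}(|L_0|)=|M_i|$, the inverse-limit formula for $X\times\{0\}\cup Y\times I$, and above all your key observation --- that a prism retraction written purely in terms of the total barycentric weight on the $M_i$-vertices commutes with arbitrary, even degenerate, simplicial bonding maps, because a vertex of $K_{i+1}$ lies in $M_{i+1}$ if and only if its image lies in $M_i$ --- are all correct. But there is one genuine gap: the assertion that $\tau\bydef\sigma\cap|M_i|$ is a face of $\sigma$. This holds only when $M_i$ is a \emph{full} subcomplex of $K_i$, and nothing in your setup guarantees that; already $L_0$ itself can fail to be full. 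For instance, let every $K_i$ be a single $1$-simplex $[u,v]$ with its faces, all bonding maps the identity, and $L_0=\{u\}\cup\{v\}$. Then $\sigma\cap|M_i|=\{u\}\cup\{v\}$ is not a face of $\sigma=[u,v]$, and the face spanned by the $M_i$-vertices is all of $\sigma$. This is worse than a bookkeeping problem: whenever $R_2(b,t)>0$ one must have $R_1(b,t)=0$, so your formula sends such a point to the span of the $M_i$-vertices at positive height, which in this example lies outside $|M_i|\times I$. The map therefore does not land in the claimed target, and no choice of the square retraction $R$ can repair this within the weight-rescaling scheme.

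The repair is the move you mention only parenthetically, and for a different reason: first apply Lemma \ref{inverse subdivision} to replace the $K_i$ by subdivisions $K_i'$ isomorphic to barycentric ones, keeping the bonding maps simplicial. The subcomplex of $K_0'$ subdividing $|L_0|$ is full (a chain of simplexes of $L_0$, with apexes, is a simplex of the subdivided subcomplex), and the preimage of a full subcomplex under a simplicial map is again full; hence the subdivided $M_i$ are full, $\sigma\cap|M_i|$ really is the face spanned by the $M_i$-vertices, and your compatibility computation and continuity estimates (which do go through, e.g.\ in the $\ell^1$-metric, with bounds independent of dimension) are then valid. With this correction your argument is a legitimate alternative to the paper's, and genuinely different from it: the paper does not use the retract characterization of cofibrations at all, but instead builds, in the second-derived subdivisions, a compatible system of simplicial-neighborhood strong deformation retractions (apex of $\sigma$ goes to apex of $\sigma\cap|L_i|$), passes to the limit to get a neighborhood of $Y$ in $X$ that strong deformation retracts onto $Y$, and quotes a lemma that this implies the inclusion of a closed subspace is a cofibration. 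Your route proves the retraction statement for the whole of $X\times I$ at once, at the cost of the more delicate corner behaviour of $R$; the paper's route only ever retracts a simplicial neighborhood, which is why its levelwise formulas stay linear.
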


\begin{proof} 
Let $L_i=(p^i_n)^{-1}(|L_n|)$ for each $i>n$.
Let $K_i'$ and $K_i''$ be given by Lemma \ref{inverse subdivision}.
Thus each $p_i$ yields simplicial maps $K_{i+1}'\to K_i'$ and $K_{i+1}''\to K_i''$.

Let $N_i$ be the subcomplex of $K_i''$ consisting of all simplexes that meet $|L_i|$ and of their faces.
Then there is a strong deformation retraction of $|N_i|$ onto $|L_i|$.
Namely, a retraction $r_i\:|N_i|\to|L_i|$ is defined by sending the apex of a simplex 
$\sigma\in K_i'$ which meets $|L_i|$ onto the apex of $\sigma\cap |L_i|$, which is a
simplex of $L_i'$ (see details in \cite{M00}*{Proof of Proposition \ref{book:nbd-retract}}).
A homotopy $h_i$ between $r_i$ and $\id_{|N_i|}$ is defined by $h_i(x,t)=(1-t)x+tr_i(x)$,
using that $x$ and $r_i(x)$ always belong to the same simplex of $K_i'$
(compare \cite{M00}*{Proof of Corollary \ref{book:mcn-nbd}}).

Since $|L_i|=p_{i-1}^{-1}(|L_{i-1}|)$, we have $|N_i|=p_{i-1}^{-1}(|N_{i-1}|)$.
Also, if $\sigma\in K_{i+1}'$ then $p_i(\sigma\cap|L_{i+1}|)=p_i(\sigma)\cap|L_i|$ due to 
$|L_i|=p_{i-1}^{-1}(|L_{i-1}|)$.
Hence the retractions $r_i$ and the homotopies $h_i$ commute with the bonding maps.
Therefore $N\bydef \lim |N_i|$ strong deformation retracts onto $Y$.
Since each $|N_i|=(p^i_n)^{-1}(|N_n|)$, we have $N=(p^\infty_n)^{-1}(|N_n|)$.
Hence $N$ is a neighborhood of $Y$ in $X$.
Thus the inclusion $Y\to X$ is a cofibration (see \cite{M00}*{Lemma \ref{book:cof3}}).
\end{proof}

From Proposition \ref{cofibration}, Proposition \ref{decomposable} and 
\cite{M00}*{Proposition \ref{book:lc-telescope-b}} we obtain

\begin{theorem} \label{telescope-decomposition}
(a) Every decomposable $\Delta$-space is homotopy equivalent to the mapping telescope of its 
finite-dimensional normal $\Delta$-subspaces.

(b) Every $\nabla$-space is homotopy equivalent to the mapping telescope of its finite dimensional skeleta.
\end{theorem}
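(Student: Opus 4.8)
The plan is to realize each part as an instance of the external telescope criterion \cite{M00}*{Proposition \ref{book:lc-telescope-b}}, whose role is to promote an exhausting increasing sequence of closed cofibrations to an honest homotopy equivalence with the corresponding mapping telescope; the two preceding propositions of this section supply exactly the filtration and the cofibrations that it requires. Thus the proof is essentially a synthesis, and its content lies in producing the filtration and checking the cofibration (and absorption) hypotheses.

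For (a) I would first invoke Lemma \ref{surjective}(a) to represent the given decomposable $\Delta$-space $X$ by an inverse sequence $\dots\xr{p_1}K_1\xr{p_0}K_0$ with surjective bonding maps, so that Proposition \ref{decomposable} becomes applicable. Its implication (1)$\Rightarrow$(3) then furnishes an increasing sequence of finite-dimensional normal $\Delta$-subspaces $X_0\subset X_1\subset\dots$ with $X_n=(p^\infty_n)^{-1}(|L_n|)$, $L_n$ a subcomplex of $K_n$, $p_n^{-1}(|L_n|)\subset|L_{n+1}|$, and $\bigcup_n X_n=X$. Each $X_n$ has precisely the form to which Proposition \ref{cofibration} applies, so $X_n\emb X$ is a cofibration; and applying the same proposition inside the $\Delta$-space $X_{n+1}$ (which is itself a $\Delta$-space by Lemma \ref{delta-subspaces}(b), inside which $X_n$ is the preimage of the subcomplex $p_n^{-1}(|L_n|)$) shows the consecutive inclusions $X_n\emb X_{n+1}$ are cofibrations as well. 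With this closed cofibrant filtration in hand, \cite{M00}*{Proposition \ref{book:lc-telescope-b}} identifies $X$ up to homotopy with the mapping telescope of $X_0\to X_1\to\dots$, i.e.\ with the telescope of the finite-dimensional normal $\Delta$-subspaces.

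For (b) I would specialize to the skeleton filtration. A $\nabla$-space is the union of its finite-dimensional skeleta $X^{(0)}\subset X^{(1)}\subset\dots$, and these are normal $\Delta$-subspaces (as already noted), so (b) is formally a special case of (a). The one point worth checking is that the skeleta have the special form demanded by Proposition \ref{cofibration}: because each bonding map $p_n$ is non-degenerate it is injective on each simplex and so carries $m$-simplexes to $m$-simplexes, whence $p_n^{-1}(|K_n^{(m)}|)=|K_{n+1}^{(m)}|$ and therefore $X^{(m)}=(p^\infty_0)^{-1}(|K_0^{(m)}|)$. Thus each skeletal inclusion into $X$, and (arguing inside $X^{(m+1)}$ as above) each inclusion $X^{(m)}\emb X^{(m+1)}$, is a cofibration, and the telescope criterion applies verbatim.

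The main obstacle I anticipate is not in assembling these ingredients but in matching them precisely to the hypotheses of \cite{M00}*{Proposition \ref{book:lc-telescope-b}}. Since a $\Delta$-space need not carry the weak topology determined by its filtration, the naive ``weak-colimit'' telescope theorem does not apply here; what makes the comparison map a homotopy equivalence is rather the explicit strong deformation retraction of the neighborhood $(p^\infty_n)^{-1}(|N_n|)$ onto $X_n$ produced in the proof of Proposition \ref{cofibration}, together with the compatibility of these neighborhoods with the bonding maps along the filtration. I would therefore take care to verify that the filtration is genuinely absorbing in the sense the external proposition requires before invoking it, the deformation-retracting neighborhoods being exactly the data that guarantees this.
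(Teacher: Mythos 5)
Your proposal is correct and follows exactly the paper's route: the paper derives Theorem \ref{telescope-decomposition} precisely by combining Proposition \ref{decomposable} (the filtration $X_n=(p^\infty_n)^{-1}(|L_n|)$ with $p_n^{-1}(|L_n|)\subset|L_{n+1}|$), Proposition \ref{cofibration} (the cofibration/neighborhood-deformation-retract data), and \cite{M00}*{Proposition \ref{book:lc-telescope-b}}, which is the synthesis you spell out, including the specialization to the skeleton filtration for $\nabla$-spaces via non-degeneracy of the bonding maps. Your elaborations (surjectivity via Lemma \ref{surjective}(a), cofibrancy of the consecutive inclusions by applying Proposition \ref{cofibration} inside $X_{n+1}$, and the caution that the telescope comparison rests on the explicit deformation-retracting neighborhoods rather than a weak-topology colimit argument) are all consistent with the paper's intended reading of that one-line proof.
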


\begin{corollary} \label{lfd-nabla}
(a) Every decomposable $\Delta$-space is homotopy equivalent to a locally finite-dimensional $\Delta$-space.

(b) Every $\nabla$-space is homotopy equivalent to a locally finite-dimensional $\nabla$-space.
\end{corollary}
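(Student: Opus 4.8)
The plan is to deduce both parts from the telescope decomposition of Theorem~\ref{telescope-decomposition}. By that theorem a decomposable $\Delta$-space $X$ (resp.\ a $\nabla$-space) is homotopy equivalent to the mapping telescope $T$ of an increasing sequence $X_0\subset X_1\subset\dots$ of its finite-dimensional normal $\Delta$-subspaces (resp.\ of its finite-dimensional skeleta $X^{(0)}\subset X^{(1)}\subset\dots$), where $T=\bigcup_n X_n\x[n,n+1]$ with each $X_n\x\{n+1\}$ glued into $X_{n+1}\x\{n+1\}$ along the inclusion. It therefore suffices to prove that $T$ is a locally finite-dimensional $\Delta$-space, and that in the case of part~(b) it is moreover a $\nabla$-space.

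Local finite-dimensionality is the easy half. Writing $\tau\:T\to[0,\infty)$ for the telescope parameter, the preimage $\tau^{-1}\big((n-1,n+1)\big)$ is an open neighborhood of the slice over the integer $n$ that is contained in the union of the two mapping cylinders of $X_{n-1}\emb X_n$ and of $X_n\emb X_{n+1}$; this union has dimension at most $\max(\dim X_{n-1},\dim X_n,\dim X_{n+1})+1<\infty$. Points with non-integer parameter have neighborhoods inside a single block $X_n\x(n,n+1)$ and are even easier. Hence every point of $T$ has a finite-dimensional neighborhood.

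To realize $T$ as a $\Delta$-space, suppose $X=\lim|K_i|$ with bonding maps $p_i$. In the decomposable case Proposition~\ref{decomposable} lets us take $X_n=(p^\infty_n)^{-1}(|L_n|)$ with $p_n^{-1}(|L_n|)\subset|L_{n+1}|$; put $L_n^{(i)}\bydef(p^i_n)^{-1}(|L_n|)$, a subcomplex of $K_i$. A short computation with $p^i_n=p_n\circ p^i_{n+1}$ shows $L_0^{(i)}\subset L_1^{(i)}\subset\dots$ and that each $p_i$ restricts to simplicial maps $L_n^{(i+1)}\to L_n^{(i)}$ commuting with these inclusions. Let $T_i$ be the mapping telescope of the filtration $|L_0^{(i)}|\subset|L_1^{(i)}|\subset\dots$, made into a metric simplicial complex by triangulating each prism $|L_n^{(i)}|\x[n,n+1]$. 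Since the $p_i$ respect the filtration they induce telescope maps $T_{i+1}\to T_i$, and over each bounded parameter range only finitely many blocks occur, so that $\lim_i|T_i|=T$; thus $T$ is a $\Delta$-space. In the case of part~(b) one argues identically with $L_n^{(i)}$ replaced by the $n$-skeleton $K_i^{(n)}$, using that a non-degenerate simplicial map sends $n$-simplexes to $n$-simplexes and hence preserves skeleta, together with $X^{(n)}=\lim_i|K_i^{(n)}|$ from Lemma~\ref{delta-subspaces}(a).

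The one delicate point---and the main obstacle---is to triangulate the prisms so coherently that the induced telescope maps are simplicial and, in the $\nabla$-case, non-degenerate. I would use the ordered prism triangulation of $|K_i|\x I$ determined by a linear order $\omega_i$ on the vertices of $K_i$, building the $\omega_i$ inductively so that each $p_i$ is weakly order-preserving on the vertices of every simplex: choose $\omega_0$ arbitrarily and let $\omega_{i+1}$ be any linear extension of the total preorder pulled back from $\omega_i$ along $p_i$. For a general simplicial $p_i$ this makes the induced prism maps simplicial and makes the triangulations of adjacent blocks glue along the slices $|L_n^{(i)}|\x\{n+1\}$ (because $\omega_i$ restricts compatibly), giving part~(a). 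When $p_i$ is non-degenerate it is injective on the vertices of each simplex, so the pulled-back preorder is strict there; the ordered prism simplex $[(v_0,0),\dots,(v_k,0),(v_k,1),\dots,(v_n,1)]$ over $[v_0<\dots<v_n]$ is then carried bijectively onto the corresponding prism simplex over $[p_iv_0<\dots<p_iv_n]$, so the telescope map is non-degenerate, giving part~(b). The remaining verifications---that $\lim_i|T_i|=T$ as $\Delta$-spaces and that $T$ carries a compatible metric---are routine.
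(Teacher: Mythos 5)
Your proposal is correct, and at the top level it coincides with the paper's route: both deduce the corollary from Theorem \ref{telescope-decomposition}. The difference lies in how the mapping telescope is recognized as a (locally finite-dimensional) $\Delta$-space, resp.\ $\nabla$-space. The paper states the corollary with no proof at all, treating this step as evident; the construction it actually has in mind surfaces only later, in the proof of Theorem \ref{nabla}(b), where the telescope is triangulated by subcomplexes of $(K_i^+\times R)^\flat$ --- the barycentric subdivision of the product of an apexed (cone) complex with the triangulated half-line $R$ --- so that the bonding maps become simplicial via the conical-map machinery of Lemma \ref{inverse subdivision}, and non-degenerate bonding maps remain non-degenerate because they send barycenters to barycenters. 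You instead triangulate each prism block by the ordered prism triangulation, choosing vertex orders $\omega_i$ inductively as linear extensions of the preorders pulled back along $p_i$: weak monotonicity makes $p_i\times\id$ simplicial on prisms, and strict monotonicity on the vertices of each simplex (available exactly when $p_i$ is non-degenerate) makes it non-degenerate. This is an elementary, self-contained alternative that avoids the cone-complex apparatus and makes the non-degeneracy needed in part (b) completely transparent; what it costs is the bookkeeping of compatible vertex orders, which the barycentric construction gets for free. Two points to tidy up: your $L_n^{(i)}\bydef(p^i_n)^{-1}(|L_n|)$ is literally defined only for $i\ge n$, so you should adopt the convention $L_n^{(i)}=K_i$ for $n\ge i$ (all inclusions, bonding relations and the limit computation $\lim_i|L_n^{(i)}|=X_n$ go through unchanged); and the identification of $\lim_i|T_i|$ with the metric mapping telescope, topology included, is the same ``homeomorphic via the identity map'' assertion that the paper itself makes without proof, so deferring it as routine is consistent with the paper's level of detail.
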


\begin{proposition} Every locally finite-dimensional $\Delta$-space is decomposable.
\end{proposition}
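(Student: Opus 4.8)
The plan is to reduce the statement to the explicit combinatorial characterizations already established in Propositions \ref{lfd} and \ref{decomposable}, the crux being to convert the \emph{open} strata coming from local finite-dimensionality into \emph{closed} normal $\Delta$-subspaces. Since both propositions presuppose surjective bonding maps, I would first invoke Lemma \ref{surjective}(a) to assume, without loss of generality, that $X$ is given by an inverse sequence $\dots\xr{p_1}K_1\xr{p_0}K_0$ with surjective $p_i$; this does not alter the $\Delta$-space structure and hence affects neither local finite-dimensionality nor decomposability.

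Next I would apply the implication (1)$\Rightarrow$(3) of Proposition \ref{lfd} to write $X=\bigcup_n X_n$, where $X_n=(p^\infty_n)^{-1}(|K_n|\but|L_n|)$ is finite-dimensional, the $L_n$ are subcomplexes of $K_n$, and $|L_{n+1}|\subset p_n^{-1}(|L_n|)$. The key move is to replace each \emph{open} set $|K_n|\but|L_n|$ by the subcomplex $M_n$ of $K_n$ consisting of all simplexes disjoint from $|L_n|$ (equivalently, the full subcomplex spanned by the vertices of $K_n$ not lying in $L_n$), and to set $Y_n=(p^\infty_n)^{-1}(|M_n|)$. Since $|M_n|\subset|K_n|\but|L_n|$, we have $Y_n\subset X_n$, so each $Y_n$ is finite-dimensional (a closed subset of a finite-dimensional space); and being the preimage of the closed subcomplex $|M_n|$, it is a closed union of simplexes, i.e.\ a $\Delta$-subspace.

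The heart of the argument is a single carrier computation: a simplex $\sigma$ of $X$ satisfies $\sigma\subset X_n$ if and only if its image $\sigma_n\bydef p^\infty_n(\sigma)$, which is a simplex of $K_n$, is disjoint from $|L_n|$, i.e.\ $\sigma_n\in M_n$, i.e.\ $\sigma\subset(p^\infty_n)^{-1}(\sigma_n)\subset Y_n$. This yields two facts at once. First, $X_n$ is an open neighborhood of $Y_n$ in which every simplex already lies in $Y_n$; since a locally finite-dimensional $\Delta$-space contains no Hilbert simplexes (as observed in the proof of Proposition \ref{lfd}), no Hilbert simplexes need be checked, and hence $Y_n$ is \emph{normal}. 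Second, because the $X_n$ form an increasing open cover (the inclusion $X_n\subset X_{n+1}$ follows from $|L_{n+1}|\subset p_n^{-1}(|L_n|)$) and each simplex of $X$ is compact, every simplex lies in a single $X_n$ and therefore in $Y_n$; as $X$ is the union of its simplexes, $X=\bigcup_n Y_n$. This exhibits $X$ as a union of finite-dimensional normal $\Delta$-subspaces, which is exactly the definition of decomposability. (If one prefers to cite Proposition \ref{decomposable}(3) instead, one checks the nesting $p_n^{-1}(|M_n|)\subset|M_{n+1}|$ by a vertex argument: for a simplex over $M_n$, each vertex $w$ has $p_n(w)\notin|L_n|$, so $w\notin p_n^{-1}(|L_n|)\supset|L_{n+1}|$, whence that simplex lies in $M_{n+1}$.)

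I expect the only genuine obstacle to be conceptual rather than computational: namely, realizing that the open complements $|K_n|\but|L_n|$ furnished by Proposition \ref{lfd} are \emph{not} $\Delta$-subspaces and must be shrunk to the \emph{closed} subcomplexes $|M_n|$ of simplexes disjoint from $|L_n|$, together with verifying that this shrinking discards no simplexes, so that the covering of $X$ is preserved and normality is obtained from the same neighborhoods $X_n$.
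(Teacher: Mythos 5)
Your proof is correct and follows essentially the same route as the paper's: both rest on Proposition \ref{lfd} together with the observation that the preimage of a subcomplex disjoint from $|L_n|$ is a finite-dimensional normal $\Delta$-subspace of $X$. The only difference is packaging — the paper takes, for each simplex $\sigma$ of $X$, the single piece $(p^\infty_n)^{-1}(\sigma_n)$ with $\sigma_n$ disjoint from $|L|$, whereas you aggregate all such pieces at level $n$ into the increasing sequence $Y_n=(p^\infty_n)^{-1}(|M_n|)$, which in addition exhibits condition (3) of Proposition \ref{decomposable} directly.
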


Let us note that the $\Delta$-space $\bigcup_{i=1}^\infty H_{n_1}*\dots*H_{n_i}$ of Example \ref{Hawaiian2}
is decomposable, but not locally finite-dimensional.

\begin{proof} Let $X$ be given by an inverse sequence $\dots\xr{p_1}|K_1|\xr{p_0}|K_0|$.
By Proposition \ref{lfd} every simplex $\sigma$ of $X$, being compact, lies in
a finite-dimensional subspace of the form $(p^\infty_n)^{-1}(|K_n|\but|L|)$, where 
$|L|$ is a subcomplex of $|K_n|$.
Then $\sigma_n\bydef p^\infty_n(\sigma)$ is disjoint from $|L|$.
Hence $(p^\infty_n)^{-1}(\sigma_n)$ is a finite-dimensional normal $\Delta$-subspace of $X$
containing $\sigma$.
\end{proof}

\section{$\Delta$-resolution}

A compactum is called {\it cell-like} it it has the fine shape of a point; a perfect map
is called {\it cell-like} if every its point-inverse is a cell-like compactum.
Every cell-like map between finite dimensional compacta is a fine shape equivalence 
(see \cite{M1}*{Theorem 3.15(e)}).
However, a cell-like map between infinite-dimensional compacta may fail to be a fine shape 
equivalence and even a shape equivalence (see \cite{DS}*{10.3.1}, \cite{Sak3}).

\begin{theorem} \label{delta}
(a) Every Polish space $X$ is fine shape equivalent to a Polish $\Delta$-space $Y$.
Moreover:

(b) $Y$ is given by an inverse sequence $\dots\to|Q_1|\to|Q_0|$, where each $Q_i$ is 
finite dimensional, locally finite and countable;

(c) there is a continuous map $\xi\:Y\to X$ which is a fine shape equivalence;

(d) $\xi$ is a cell-like perfect map;

(e) if $X$ of dimension $\le n$, then so is $Y$;

(f) if $X$ is locally finite dimensional, then so is $Y$.
\end{theorem}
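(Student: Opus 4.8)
The plan is to resolve $X$ by nerves of a well-chosen inverse sequence of open covers, extending Kodama's construction \cite{Ko1} to the Polish setting. First I would construct countable, locally finite open covers $\U_0,\U_1,\dots$ of $X$ such that each $\U_{i+1}$ star-refines $\U_i$ and the mesh of $\U_i$ tends to $0$. These exist because a Polish space is separable and metrizable, hence Lindel\"of and paracompact: one takes a countable locally finite open refinement of the cover by balls of radius $2^{-i}$ and then passes to star-refinements. When $\dim X\le n$ I would in addition require each $\U_i$ to have order $\le n+1$, which is possible by the covering-dimension characterization of $\dim$; this yields (e). For (f), when $X$ is locally finite-dimensional, I would instead arrange that each point of $X$ has a neighborhood meeting only cover elements that span simplexes of locally bounded dimension, so that the resulting nerves are locally finite-dimensional.

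Let $Q_i$ be the nerve of $\U_i$, and let a choice of refinement projection $\U_{i+1}\to\U_i$ induce a simplicial bonding map $p_i\:|Q_{i+1}|\to|Q_i|$. Each $Q_i$ is countable, locally finite and finite-dimensional, so $|Q_i|$ is Polish; hence $Y\bydef\lim|Q_i|$, being a closed subspace of the countable product $\prod_i|Q_i|$, is a Polish $\Delta$-space, which gives (b). To define $\xi\:Y\to X$, take a thread $y=(y_0,y_1,\dots)$ and let $\sigma_i$ be the carrier of $y_i$ in $Q_i$; its vertices are members of $\U_i$ with a common point, and by star-refinement their union lies in a single element of $\U_{i-1}$. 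Since the threads are compatible with the bonding maps, these sets shrink coherently with diameters tending to $0$, and their closures meet in exactly one point, which I declare to be $\xi(y)$. Continuity is routine. The map $\xi$ is surjective: for each $x$ the finite sets $\{U\in\U_i:x\in U\}$ form an inverse system under the refinement projections with nonempty limit (using Lemma \ref{surjective} to keep the bonding maps surjective), yielding a thread mapping to $x$. This defines the map required in (c).

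Next I would verify (d). Local finiteness of the covers makes each fiber $\xi^{-1}(x)$ an inverse limit of finite subcomplexes of the $Q_i$ (the stars associated with $x$), so the fibers are compact; together with a properness check this shows $\xi$ is perfect. These subcomplexes are contractible and the bonding maps respect their cone structure, so $\xi^{-1}(x)$ is an inverse limit of contractible compacta and is therefore cell-like; star-refinement is precisely the mechanism guaranteeing that the preimage of a small set is ``small and contractible'', which is the classical reason a nerve resolution is cell-like. Properties (e) and (f) now follow from the order and local-order conditions imposed in the first step: (e) is immediate, since $\dim Q_i\le n$ forces $\dim Y\le n$, and (f) follows by checking the criterion of Proposition \ref{lfd}.

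The main obstacle is (c): proving that $\xi$ is a \emph{fine shape equivalence}. This cannot be deduced from cell-likeness alone, since a cell-like map between infinite-dimensional compacta need not even be a shape equivalence, as the Taylor compactum example in the introduction shows. Instead I would show that the system $\{|Q_i|,p_i\}$, together with the canonical maps $\kappa_i\:X\to|Q_i|$ induced by partitions of unity subordinate to $\U_i$, is a fine shape expansion of $X$ in the sense of \cite{M-I}*{\S\ref{fish:fineshape}}; the requisite approximate commutation of $\kappa_{i-1}$ with $p_{i-1}\kappa_i$ holds because the covers refine one another with mesh tending to $0$. Since $Y=\lim|Q_i|$ is the inverse limit of this very system, the comparison theorem of \cite{M-I} between a Polish space and the limit of its expansion shows that $Y$ is fine shape equivalent to $X$, with $\xi$ realizing the equivalence; this gives (a) and completes (c). The delicate point is to make the homotopies $\kappa_{i-1}\simeq p_{i-1}\kappa_i$ and their higher coherences fit together into a genuine (strong) expansion rather than a mere homotopy expansion, since fine shape is a strong-shape-type invariant and is sensitive to exactly this coherence.
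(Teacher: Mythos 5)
Your nerve construction is a reasonable Polish extension of Kodama's approach, and the point-set parts of your proposal (the definition of $\xi$ by shrinking carriers, surjectivity, perfectness, and the dimension statements (e) and (f)) could plausibly be pushed through. The genuine gap is exactly where you locate the ``delicate point'': the claim that (a) and (c) follow because the nerve system with the canonical maps $\kappa_i$ is a ``fine shape expansion'' to which a ``comparison theorem of \cite{M-I}'' applies. No such notion or theorem exists in \cite{M-I}: fine shape there is not defined via expansions or coherent pro-homotopy data at all. A fine shape morphism is represented by an \emph{approaching} map between deleted mapping telescopes (e.g.\ between $|P|_{[0,\infty)}$ and $|Q|_{[0,\infty)}$ when $X=\lim|P_i|$ and $Y=\lim|Q_i|$), and an equivalence by an approaching homotopy equivalence. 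So the statement ``the limit of an expansion of $X$ is fine shape equivalent to $X$'' is not a citable lemma --- it is essentially the assertion of Theorem \ref{delta}(a) itself, and your argument is circular at this point. Worse, in your construction $X$ is \emph{not} homeomorphic to $\lim|Q_i|$, so you have no presentation of $X$ as an inverse limit at all, hence no telescope representing $X$ in fine shape to compare with that of $Y$; and even in classical strong shape theory the limit of an expansion of a non-compact space need not be equivalent to the space, which is why resolutions rather than limits are used there.

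The missing ingredient, which is what the paper's proof supplies, is a presentation of $X$ itself as an honest inverse limit together with \emph{controlled} homotopies. The paper first invokes Isbell's theorem to write the Polish space $X$ homeomorphically as the limit of a \emph{scalable} inverse sequence $\dots\to|P_1|\to|P_0|$ of countable, locally finite, finite-dimensional complexes, whose bonding maps are simplicial only into admissible subdivisions $P_i^*$; this yields the telescope $|P|_{[0,\infty]}$ containing $X$ as a Z-set, i.e.\ an object representing $X$ in fine shape. The failure of the $p_i$ to be simplicial into $P_i$ is then repaired by Sakai's simplicial approximation theorem, whose straight-line homotopies $h^i_t$ are carrier-preserving, $h^i_t(\Delta_x)\subset\Delta_{p_i(x)}$; this control (properties ($*$) and ($**$)) is precisely what makes the mapping-cylinder comparison maps $F,G$ and homotopies $\Phi,\Psi$ between $|P|_{[0,\infty)}$ and $|Q|_{[0,\infty)}$ approaching, via Lemma \ref{convergence}, and that verification \emph{is} the proof that $Y$ is fine shape equivalent to $X$. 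Your mesh-to-zero star refinements give only homotopy commutation $\kappa_i\simeq p_i\kappa_{i+1}$, with no mechanism to upgrade this to approaching data. (A secondary, fixable inaccuracy: in (d) the fiber $\xi^{-1}(x)$ is not the limit of the simplexes spanned by $\{U\in\U_i : x\in U\}$ but of the stars of the carrier simplexes, as in Lemma \ref{kaul}; since $X$ is not the limit of your nerves, even identifying $\xi^{-1}(x)$ requires an argument of that kind.)
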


\subsection{The $\Delta$-space $Y$}

\begin{proof}[Proof of Theorem \ref{delta}(a,b)]
By Isbell's theorem (see \cite{M00}*{Theorem \ref{book:isbell}}) the given Polish space $X$ is the limit 
of a scalable inverse sequence $\dots\xr{p_1}|P_1|\xr{p_0}|P_0|$ of polyhedral maps between polyhedra $|P_i|$, 
where each simplicial complex $P_i$ is countable, locally finite and finite dimensional
and each $p_i$ is a simplicial map from $P_{i+1}$ to some admissible subdivision $P_i^*$ of $P_i$.
If $x\in |P_i|$, let $\Delta_x$ and $\Delta^*_x$ denote the smallest simplexes of $P_i$ and $P_i^*$ 
containing $x$.
Then $p_i(\Delta_x)\subset\Delta^*_{p_i(x)}\subset\Delta_{p_i(x)}$ for each $x\in |P_{i+1}|$.
Let $|P|_{[0,\infty]}$ denote the extended mapping telescope of the inverse sequence 
$\dots\xr{p_1}|P_1|\xr{p_0} |P_0|$ (see \cite{M00}*{\S\ref{book:extended}}).
Thus $X$ is a Z-set in the ANR $|P|_{[0,\infty]}$ and $|P|_{[0,\infty]}\but X$ is 
the infinite mapping telescope $|P|_{[0,\infty)}$.

By Sakai's theorem (see \cite{M00}*{Theorem \ref{book:simp-appr0}}) each bonding map $p_i\:|P_{i+1}|\to|P_i|$ 
is homotopic to a map $q_i\:|P_{i+1}|\to|P_i|$, which is a simplicial map from some admissible subdivision 
$P'_{i+1}$ of $P_{i+1}$ into $P_i$, by a homotopy $h^i_t\:|P_{i+1}|\to |P_i|$.
Moreover, by the proof of Sakai's theorem $q_i(\Delta_x)\subset\Delta_{p_i(x)}$ for each $x\in |P_{i+1}|$
and $h_t^i$ is defined by $h_t^i(x)\bydef (1-t)p_i(x)+tq_i(x)$ for each $x\in |P_{i+1}|$ and each $t\in I$.
Since $p_i(\Delta_x)\subset\Delta_{p_i(x)}$, we get that $h_t^i(\Delta_x)\subset\Delta_{p_i(x)}$ for each $t\in I$.
Consequently, we obtain the following property for any pair of subcomplexes $K_i\subset P_i$ and 
$K_{i+1}\subset P_{i+1}$ such that $p_i(|K_{i+1}|)\subset |K_i|$:
\[\begin{gathered}\label{property*}
h_t^i(|K_{i+1}|)\subset|K_i|\text{ for all $t\in I$;}\\
\text{ in particular, }q_i(|K_{i+1}|)\subset |K_i|.
\end{gathered}\tag*{($*$)}\]

Let $Q_0=P_0$ and $Q_1=P_1'$.
Assuming that an admissible subdivision $Q_i$ of $P_i$ has been defined, by 
\cite{M00}*{Theorem \ref{book:map subdivision}} there exists an admissible subdivision $Q_{i+1}$ of $P_{i+1}'$ 
such that $q_i$ is a simplicial map from $Q_{i+1}$ to $Q_i$.
Let $Y$ be the limit of the inverse sequence $\dots\xr{q_1}|Q_1|\xr{q_0}|Q_0|$ and let 
$|Q|_{[0,\infty]}=|Q|_{[0,\infty)}\cup Y$ denote its extended mapping telescope.
Then $Y$ is a $\Delta$-space.
Since each $Q_i$ is countable, $Y$ is separable, and since each $Q_i$ is locally finite, $Y$ is
completely metrizable; thus $Y$ is Polish.
If $y\in |Q_i|$, let $\Delta'_y$ denote the smallest simplex of $Q_i$ containing $y$.
Since $q_i\:Q_{i+1}\to Q_i$ is simplicial, $q_i(\Delta'_y)\subset\Delta'_{q_i(y)}$.

Since $p_i$ and $q_i$ are homotopic, their mapping cylinders are homotopy equivalent relative 
to $|P_i|\cup |P_{i+1}|$.
Namely, $f_i\:|P|_{[i,\,i+1]}\to |Q|_{[i,\,i+1]}$ is defined by sending $|P|_{[i+\frac12,\,i+1]}$ 
into $|Q|_{[i,\,i+1]}$ using the increasing linear homeomorphism $\ell_i\:[i+\frac12,\,i+1]\to[i,\,i+1]$ 
and $|P|_{[i,\,i+\frac12]}$ into $|Q_i|$ using $h_t^i$.
Similarly, $g_i\:|Q|_{[i,\,i+1]}\to |P|_{[i,\,i+1]}$ is defined by sending $|Q|_{[i+\frac12,\,i+1]}$ 
into $|P|_{[i,\,i+1]}$ using $\ell_i$ and $|Q|_{[i,\,i+\frac12]}$ into $|P_i|$ using $h_{1-t}^i$.
Then $g_if_i$ is homotopic to $\id_{|P|_{[i,\,i+1]}}$ by the obvious homotopy 
$\phi_i\:|P|_{[i,\,i+1]}\x I\to |P|_{[i,\,i+1]}$ and $f_ig_i$ is homotopic to $\id_{|Q|_{[i,\,i+1]}}$ 
by the obvious homotopy $\psi_i\:|P|_{[i,\,i+1]}\x I\to |P|_{[i,\,i+1]}$.
Consequently, $|P|_{[0,\infty)}$ and $|Q|_{[0,\infty)}$ are homotopy equivalent via some maps 
$F\:|P|_{[0,\infty)}\to |Q|_{[0,\infty)}$ and $G\:|Q|_{[0,\infty)}\to |P|_{[0,\infty)}$ such that $GF$ 
is homotopic to $\id_{|P|_{[0,\infty]}}$ via some homotopy $\Phi\:|P|_{[0,\infty)}\x I\to |P|_{[0,\infty)}$ 
and $FG$ is homotopic to $\id_{|Q|_{[0,\infty]}}$ via some homotopy 
$\Psi\:|Q|_{[0,\infty)}\x I\to |Q|_{[0,\infty)}$.

The constructed maps and homotopies satisfy the following property for any family of subcomplexes 
$K_i\subset P_i$ such that each $p_i(|K_{i+1}|)\subset |K_i|$.
Let $|K|_{[0,\infty)}$ be the mapping telescope of the inverse sequence 
$\dots\xr{p_1|}|K_1|\xr{p_0|}|K_0|$.
By \ref{property*} we also have the mapping telescope
$|L|_{[0,\infty)}$ of the inverse sequence $\dots\xr{q_1|}|L_1|\xr{q_0|}|L_0|$, where each $L_i$ 
is the subcomplex of $Q_i$ subdividing $K_i$.
Then it follows from \ref{property*} that each $f_i(|K|_{[i,\,i+1]})\subset |L|_{[i,\,i+1]}$ and 
each $g_i(|L|_{[i,\,i+1]})\subset |K|_{[i,\,i+1]}$; moreover, 
each $\phi_i(|K|_{[i,\,i+1]}\x I)\subset |K|_{[i,\,i+1]}$ and 
each $\psi_i(|L|_{[i,\,i+1]}\x I)\subset |L|_{[i,\,i+1]}$.
Consequently, 
\[\begin{gathered}\label{property**}
F(|K|_{[0,\infty)})\subset |L|_{[0,\infty)}\text{ and }
G(|L|_{[0,\infty)})\subset |K|_{[0,\infty)};\text{ moreover,}\\
\Phi(|K|_{[0,\infty)}\x I)\subset |K|_{[0,\infty)}\text{ and }
\Psi(|L|_{[0,\infty)}\x I)\subset |L|_{[0,\infty)}.
\end{gathered}\tag*{($**$)}\]

\begin{lemma} \label{convergence}
If $x_k$ is a sequence of points in $|P|_{[0,\infty)}$ converging to an $x_\infty\in X$,
then $M\bydef \{x_k\mid k\in\N\}$ lies in the mapping telescope $|K|_{[0,\infty)}$ of an inverse sequence
of the form $\dots\xr{p_1|}|K_1|\xr{p_0|}|K_0|$, where $K_i\subset P_i$ are finite subcomplexes 
such that each $p_i(|K_{i+1}|)\subset |K_i|$.
In fact, $\lim|K_i|=\{x_\infty\}$.
\end{lemma}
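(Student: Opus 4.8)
The plan is to build the complexes $K_i$ out of the ``shadows'' of the points $x_k$ and to extract the three combinatorial requirements almost mechanically, isolating the equality $\lim|K_i|=\{x_\infty\}$ as the one substantial point.

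First I record that the telescope time of $x_k$ tends to $\infty$. Each $x_k$ lies in a single cylinder $|P|_{[m_k,\,m_k+1]}$; write $y_k\in|P_{m_k+1}|$ for its top coordinate and, for $0\le j\le m_k$, let $s^j_k\bydef r_j(x_k)\in|P_j|$, where $r_j\:|P|_{[j,\infty]}\to|P_j|$ is the canonical retraction collapsing the tail, so that $r_j|_X=p^\infty_j$ and $r_n=p^i_n\circ r_i$ on $|P|_{[i,\infty]}$ for $n\le i$. Putting also $s^{m_k+1}_k\bydef y_k$, the shadows satisfy $p_{j-1}(s^j_k)=s^{j-1}_k$ for $1\le j\le m_k+1$. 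If some subsequence of the $x_k$ stayed in a fixed $|P|_{[0,N]}$ — a closed set disjoint from $X$ — it could not converge to $x_\infty\in X$; hence $m_k\to\infty$. In particular, for each fixed $j$ the shadow $s^j_k$ is defined for all large $k$, and the neighbourhood description of a point of $X$ in the extended telescope gives $s^j_k\to x_\infty^j\bydef p^\infty_j(x_\infty)$.

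Now define $K_i$ to be the subcomplex of $P_i$ whose simplexes are the carriers $\Delta_{s^i_k}$ (over all $k$ with $m_k\ge i-1$) and $\Delta_{x_\infty^i}$ together with their faces. The point set $\{s^i_k\mid m_k\ge i-1\}\cup\{x_\infty^i\}$ is compact — it is $r_i(\Cl M\cap|P|_{[i,\infty]})$ augmented by the finitely many extra points $y_k$ with $m_k=i-1$, and $s^i_k\to x_\infty^i$ — so by local finiteness of $P_i$ it meets only finitely many simplexes and $K_i$ is finite. Since each $p_i$ is simplicial it carries $\Delta_{s^{i+1}_k}$ into $\Delta_{p_i(s^{i+1}_k)}=\Delta_{s^i_k}$ and $\Delta_{x_\infty^{i+1}}$ into $\Delta_{x_\infty^i}$, so $p_i(|K_{i+1}|)\subset|K_i|$ and the restricted inverse sequence is defined. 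Finally $M\subset|K|_{[0,\infty)}$, because the top coordinate $y_k=s^{m_k+1}_k$ lies in $|K_{m_k+1}|$ and $p_{m_k}(y_k)=s^{m_k}_k\in|K_{m_k}|$, so $x_k$ lies in the subcylinder $|K|_{[m_k,\,m_k+1]}$.

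There remains $\lim|K_i|=\{x_\infty\}$, which I expect to be the crux. Writing $A_i\bydef(p^\infty_i)^{-1}(|K_i|)$, the relation $|K_{i+1}|\subset p_i^{-1}(|K_i|)$ makes the $A_i$ a decreasing sequence of compacta with $\bigcap_iA_i=\lim|K_i|\ni x_\infty$, so it suffices to prove $\operatorname{diam}A_i\to0$. Given $z\in\bigcap_iA_i$, for each $i$ the coordinate $z_i$ shares a simplex of $K_i$ with $r_i(w_i)$ for some $w_i\in\Cl M\cap|P|_{[i,\infty]}$; since these $w_i$ have telescope time $\ge i\to\infty$, every subsequential limit lies in $\Cl M\cap X=\{x_\infty\}$, so $r_n(w_i)\to x_\infty^n$ for each fixed $n$. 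Applying the simplicial map $p^i_n$, which sends the common simplex of $z_i$ and $r_i(w_i)$ onto a single simplex of $P_n$, shows that $z_n$ and $r_n(w_i)$ always lie in a common simplex of $P_n$; by local finiteness this simplex may be held fixed along a subsequence, and letting $i\to\infty$ gives a simplex $\tau_n$ of $P_n$ containing both $z_n$ and $x_\infty^n$. The hard part is the last implication: to turn this ``same simplex at every level'' into genuine equality $z=x_\infty$. This is exactly where the scalable structure from Isbell's theorem is essential — it guarantees that the pullbacks $(p^\infty_n)^{-1}(\sigma)$ of simplexes of $P_n$ form covers of $X$ of mesh tending to $0$, whence $d_X(z,x_\infty)\le\operatorname{diam}(p^\infty_n)^{-1}(\tau_n)\to0$ and $z=x_\infty$. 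I expect this transfer from combinatorial adjacency at all levels to metric equality, rather than the bookkeeping in the first three paragraphs, to be the main obstacle, and the reason the hypothesis cannot be weakened to an arbitrary inverse sequence of polyhedra.
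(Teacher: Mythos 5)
Your construction coincides with the paper's: the paper likewise projects each $x_k$ to the top coordinate $x'_k$ of its cylinder, forms the compact set $M_i$ of level-$i$ images of $\{x_\infty\}\cup\{x'_k\}$ under the telescope retractions, and takes $K_i$ to be the smallest subcomplex of $P_i$ containing $M_i$ --- which is exactly your complex of carriers and their faces; finiteness, $p_i(|K_{i+1}|)\subset|K_i|$ and $M\subset|K|_{[0,\infty)}$ are then checked just as you do. Where you genuinely diverge is the final claim: the paper observes that $\lim M_i=\{x_\infty\}$ and then simply cites a proposition on scalable inverse sequences from \cite{M00}, to the effect that passing from such nested compacta to the smallest subcomplexes containing them does not change the inverse limit; you instead prove this special case by hand, showing that any thread $z\in\lim|K_i|$ shares a simplex of $P_n$ with $x_\infty^n$ at every level $n$ (your subsequence argument for this is correct) and then invoking scalability. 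That self-contained route is sound, but two details need repair. First, the sets $A_i=(p^\infty_i)^{-1}(|K_i|)$ need not be compact --- point-inverses of $p^\infty_i$ can be unbounded, e.g.\ for $X=\R^\infty=\lim\R^n$ --- though you never actually use compactness, only the nesting and $\bigcap_iA_i=\lim|K_i|$. Second, the closing inequality $d_X(z,x_\infty)\le\operatorname{diam}\,(p^\infty_n)^{-1}(\tau_n)\to0$ is metric-sensitive: levels above $n$ are not controlled by $\tau_n$, so the pullback of a single simplex can even be unbounded, and the mesh claim is valid only for a metric inducing the inverse-limit uniformity, where high levels contribute arbitrarily little. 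The clean, metric-free finish is to project down rather than pull back: since $z_n,x_\infty^n\in\tau_n$ and $\operatorname{diam}\tau_n\le1$, scalability gives $d_i(z_i,x_\infty^i)\le2^{i-n}$ for every $i\le n$, so letting $n\to\infty$ yields $z_i=x_\infty^i$ for all $i$, i.e.\ $z=x_\infty$. (A last bookkeeping point: at level $i$ the carriers $\Delta_{s_k^i}$ with $m_k=i-1$ have defining points $y_k\notin\Cl M$, so ``$w_i\in\Cl M\cap|P|_{[i,\infty]}$'' should be relaxed to ``defining point of a simplex of $K_i$''; their level-$n$ shadows still converge to $x_\infty^n$, so nothing breaks.)
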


\begin{proof}
Each $|P|_{(i,\,i+1]}$ contains only finitely many of the $x_k$, and consequently $|P_{i+1}|$ 
contains only finitely many of their images $x'_k$ under the composition 
$|P|_{(i,\,i+1]}\xr{\cong} |P_{i+1}|\x (0,1]\xr{\text{projection}} |P_{i+1}|$.
Then the $x'_k$ converge to $x_\infty$, and hence $M'_+\bydef \{x_\infty\}\cup\{x'_k\mid k\in\N\}$ 
is compact.
Then the image $M_i$ of $M'_+\cap |P|_{[i,\infty]}$ under the retraction 
$\Pi_i\:|P|_{[i,\infty]}\to |P_i|$ is compact.
Let $K_i$ be the smallest subcomplex of $P_i$ such that $M_i\subset|K_i|$.
Since $M_i$ is compact and $P_i$ is locally finite, $K_i$ is finite.
Since $p_i(M_{i+1})\subset M_i$ and $p_i(\Delta_x)\subset\Delta_{p_i(x)}$ for each 
$x\in |P_{i+1}|$, we have $p_i(|K_{i+1}|)\subset|K_i|$.
If $x_k\in |P|_{(i,\,i+1]}$, then $x'_k\in M_i$ and consequently $x_k\in |K|_{(i,\,i+1]}$.
Thus $M\subset |K|_{[0,\infty)}$.

It is easy to see that $\lim M_i=\{x_\infty\}$.
Since the inverse sequence $\dots\xr{p_1}|P_1|\xr{p_0}|P_0|$ is scalable, it follows that 
$\lim|K_i|=\{x_\infty\}$ (see \cite{M00}*{Proposition \ref{book:scalable}(b)}).
\end{proof}

Given a sequence $x_i$ of points in $|P|_{[0,\infty)}$ converging to an $x_\infty\in X$, 
Lemma \ref{convergence} yields a $|K|_{[0,\infty)}$ containing $M\bydef \{x_k\mid k\in\N\}$.
Also by \ref{property*} we get the extended mapping telescope $|L|_{[0,\infty]}$ of 
the inverse sequence $\dots\xr{q_1|}|L_1|\xr{q_0|}|L_0|$, where each $L_i$ is the subcomplex 
of $Q_i$ subdividing $K_i$.
Then $F(M)\subset F(|K|_{[0,\infty)})$ and by \ref{property**} also 
$F(|K|_{[0,\infty)})\subset |L|_{[0,\infty)}$, where $|L|_{[0,\infty]}$ is compact.
Thus $F$ is an $X$-$Y$-approaching map.

If $(x_k,t_k)$ is a sequence of points in $|P|_{[0,\infty)}\x I$ converging to 
an $(x_\infty,t_\infty)\in X\x I$,
then $x_k\to x_\infty$, and hence $\hat M\bydef \{(x_k,t_k)\mid k\in\N\}$ lies in $|K|_{[0,\infty)}\x I$, 
where $|K|_{[0,\infty)}$ is yielded by Lemma \ref{convergence}.
Then $\Phi(\hat M)\subset\Phi(|K|_{[0,\infty)}\x I)$ and by \ref{property**} also 
$\Phi(|K|_{[0,\infty)}\x I)\subset |K|_{[0,\infty)}$, where $|K|_{[0,\infty]}$ is compact.
Thus $\Phi$ is an $X$-$X$-approaching homotopy.

\begin{lemma} \label{going back}
Suppose that $C_i\subset Q_i$ are subcomplexes such that each 
$q_i(|C_{i+1}|)\subset|C_i|$.
Let $K_i$ be the subcomplex of $P_i$ consisting of all simplexes that meet $|C_i|$ and of 
all their faces.
Then each $p_i(|K_{i+1}|)\subset|K_i|$.
\end{lemma}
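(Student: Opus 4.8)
The plan is to verify the inclusion pointwise: I would fix an arbitrary $x\in|K_{i+1}|$ and show that $p_i(x)\in|K_i|$. Since $x$ lies in $|K_{i+1}|$ and $K_{i+1}$ consists of all simplexes of $P_{i+1}$ meeting $|C_{i+1}|$ together with their faces, the carrier of $x$ in $P_{i+1}$ is a face of a simplex $\sigma$ of $P_{i+1}$ with $\sigma\cap|C_{i+1}|\neq\emptyset$; in particular $x\in\sigma$, and I would pick a witness $y\in\sigma\cap|C_{i+1}|$. The goal is then to produce a single simplex $\tau$ of $P_i$ that contains $p_i(x)$ and simultaneously meets $|C_i|$, for then $\tau\in K_i$ and hence $p_i(x)\in\tau\subset|K_i|$.

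The key step is to use the barycenter of $\sigma$ to confine the whole image $p_i(\sigma)$ to one simplex of $P_i$. Letting $b$ be the barycenter of $\sigma$, the smallest simplex of $P_{i+1}$ containing $b$ is $\sigma$ itself, i.e.\ $\sigma=\Delta_b$. Applying the carrier property $p_i(\Delta_z)\subset\Delta_{p_i(z)}$ (established above for every $z\in|P_{i+1}|$) to $z=b$ gives $p_i(\sigma)\subset\tau$, where $\tau\bydef\Delta_{p_i(b)}$ is a simplex of $P_i$. In particular both $p_i(x)$ and $p_i(y)$ lie in $\tau$.

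It then remains to show that this $\tau$ meets $|C_i|$, for which I would use the companion property $q_i(\Delta_z)\subset\Delta_{p_i(z)}$ together with the hypothesis. Since $y\in|C_{i+1}|$, the hypothesis gives $q_i(y)\in|C_i|$; and applying the $q$-property to $z=y$ gives $q_i(y)\in\Delta_{p_i(y)}$. Because $p_i(y)\in\tau$, the smallest simplex $\Delta_{p_i(y)}$ of $P_i$ containing $p_i(y)$ is a face of $\tau$, so $q_i(y)\in\Delta_{p_i(y)}\subset\tau$. Thus $q_i(y)\in\tau\cap|C_i|$, so $\tau$ meets $|C_i|$, whence $\tau\in K_i$ and $p_i(x)\in|K_i|$. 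As $x$ was arbitrary, $p_i(|K_{i+1}|)\subset|K_i|$.

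The main obstacle — really the only subtlety — is the mismatch of ambient complexes: $K_i$ is cut out inside $P_i$, whereas the hypothesis $q_i(|C_{i+1}|)\subset|C_i|$ concerns the subdivided complexes $C_i\subset Q_i$ and the maps $q_i$ rather than $p_i$. The two carrier inequalities $p_i(\Delta_z)\subset\Delta_{p_i(z)}$ and $q_i(\Delta_z)\subset\Delta_{p_i(z)}$ are exactly the bridge between the two pictures, since they place $p_i(z)$ and $q_i(z)$ in a common simplex $\Delta_{p_i(z)}$ of $P_i$. The one point that must be gotten right is that $p_i(\sigma)$ lands in a single simplex of $P_i$ (not merely in a subcomplex), and the barycenter trick is what secures this; everything else is bookkeeping with faces and smallest carriers.
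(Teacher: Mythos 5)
Your proof is correct and is essentially the paper's own argument: the paper also confines $p_i(\sigma)$ to a single simplex $\tau$ of $P_i$ (taking $\tau$ to be the smallest simplex containing $p_i(\sigma)$, for $\sigma$ a maximal simplex of $K_{i+1}$), then uses the relation $q_i(\Delta_z)\subset\Delta_{p_i(z)}$ (packaged there as property ($*$)) together with the hypothesis $q_i(|C_{i+1}|)\subset|C_i|$ to see that $\tau$ meets $|C_i|$, hence $\tau\in K_i$. Your only deviations --- arguing pointwise with a witness point $y$ rather than with a subdivision simplex $\sigma'\in C_{i+1}$, and producing $\tau$ via the barycenter trick rather than as the minimal simplex containing $p_i(\sigma)$ --- are cosmetic.
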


Let us note that if $\tilde K_i$ is the smallest subcomplex of $P_i$ such that 
$|C_i|\subset|\tilde K_i|$, then $p_i(|\tilde K_{i+1})\not\subset|\tilde K_i|$ in general.

\begin{proof}
Let $\sigma$ be a maximal simplex of $K_{i+1}$.
Since $Q_{i+1}$ is a subdivision of $P_{i+1}$, $\sigma$ contains a simplex $\sigma'$ 
of $C_{i+1}$.
Let $\tau$ be the smallest simplex of $P_i$ containing $p_i(\sigma)$.
Then by \ref{property*} $q_i(\sigma)\subset\tau$.
Then also $q_i(\sigma')\subset\tau$.
Since $q_i(\sigma')\in C_i$, we get that $\tau\in K_i$.
Then also $p_i(\sigma)\subset|K_i|$.
\end{proof}

\begin{lemma} \label{convergence2}
If $y_k$ is a sequence of points in $|Q|_{[0,\infty)}$ converging to a $y_\infty\in Y$,
then $N\bydef \{y_k\mid k\in\N\}$ lies in the mapping telescope $|L|_{[0,\infty)}$ of an inverse sequence
$\dots\xr{q_1|}|L_1|\xr{q_0|}|L_0|$, where $L_i\subset Q_i$ are subcomplexes subdividing
finite subcomplexes $K_i\subset P_i$ such that each $p_i(|K_{i+1}|)\subset |K_i|$ and consequently 
by \ref{property*} also each $q_i(|L_{i+1}|)\subset |L_i|$.
\end{lemma}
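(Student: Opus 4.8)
The plan is to run the argument of Lemma \ref{convergence} in the $Q$-world to produce finite subcomplexes $C_i\subset Q_i$ with $q_i(|C_{i+1}|)\subset|C_i|$ and $N\subset|C|_{[0,\infty)}$, and then to convert these into the required $L_i$---subcomplexes of $Q_i$ subdividing finite subcomplexes of $P_i$---by means of Lemma \ref{going back}.

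First I would repeat the construction in the proof of Lemma \ref{convergence}, replacing $|P|$, $p_i$, and $\Delta_x$ by $|Q|$, $q_i$, and $\Delta'_y$ throughout. Since $y_k\to y_\infty\in Y$, each $|Q|_{(i,\,i+1]}$ contains only finitely many $y_k$; projecting those to $|Q_{i+1}|$ and viewing the images at their levels in the telescope produces, together with $y_\infty$, a compact subset of $|Q|_{[0,\infty]}$, whose image under the retraction $|Q|_{[i,\infty]}\to|Q_i|$ is a compact set $M_i\subset|Q_i|$. Let $C_i$ be the smallest subcomplex of $Q_i$ with $M_i\subset|C_i|$; since $Q_i$ is locally finite, $C_i$ is finite, the relation $q_i(\Delta'_y)\subset\Delta'_{q_i(y)}$ gives $q_i(|C_{i+1}|)\subset|C_i|$, and $N\subset|C|_{[0,\infty)}$ follows exactly as before. (The scalability step of Lemma \ref{convergence} is not needed, since no claim about $\lim|C_i|$ is made.)

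Next I would feed the family $C_i$ into Lemma \ref{going back}, obtaining subcomplexes $K_i\subset P_i$---namely all simplexes of $P_i$ that meet $|C_i|$, together with their faces---which satisfy $p_i(|K_{i+1}|)\subset|K_i|$. Since each $C_i$ is finite and $P_i$ is locally finite, only finitely many simplexes of $P_i$ meet the compact set $|C_i|$, so each $K_i$ is finite. Finally I would take $L_i$ to be the subcomplex of $Q_i$ subdividing $K_i$, so that $|L_i|=|K_i|$. Then $q_i(|L_{i+1}|)\subset|L_i|$ is exactly \ref{property*} applied to $p_i(|K_{i+1}|)\subset|K_i|$; and since $|C_i|\subset|K_i|=|L_i|$ while $C_i$ and $L_i$ are both subcomplexes of $Q_i$, every simplex of $C_i$ is a simplex of $L_i$, i.e.\ $C_i\subset L_i$. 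Hence $|C|_{[0,\infty)}\subset|L|_{[0,\infty)}$, and therefore $N\subset|L|_{[0,\infty)}$, as required.

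Everything but the last passage is a transcription of Lemma \ref{convergence}, so I expect the only genuinely new point to be the crossing from the $Q$-world to the $P$-world. The naive attempt---taking the smallest subcomplex of $P_i$ containing each $|C_i|$---is exactly what the remark following Lemma \ref{going back} warns against: those subcomplexes need not be compatible with the $p_i$. Lemma \ref{going back} sidesteps this by rounding each $C_i$ up to all the $P$-simplexes it meets; the cost is that we land in $L_i\supset C_i$ rather than in $C_i$ itself, which is harmless because the conclusion only asks for $N$ to be contained in $|L|_{[0,\infty)}$.
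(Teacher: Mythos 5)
Your proposal is correct and follows essentially the same route as the paper's own proof: run the argument of Lemma \ref{convergence} in the $Q$-world using $q_i(\Delta'_y)\subset\Delta'_{q_i(y)}$ to get finite $C_i\subset Q_i$, then pass to the $P$-world via Lemma \ref{going back}. You even make explicit two details the paper leaves implicit --- the finiteness of $K_i$ (from local finiteness of $P_i$ and compactness of $|C_i|$) and the final containment $C_i\subset L_i$ giving $N\subset|L|_{[0,\infty)}$.
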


\begin{proof}
Since $q_i(\Delta'_y)\subset\Delta'_{q_i(y)}$ for each $y\in |Q_{i+1}|$, the proof of 
Lemma \ref{convergence} works to show that $N$ lies in the mapping telescope $|C|_{[0,\infty)}$ 
of an inverse sequence $\dots\xr{p_1|}|C_1|\xr{p_0|}|C_0|$, where $C_i\subset Q_i$ are 
finite subcomplexes such that each $q_i(|C_{i+1}|)\subset|C_i|$.
Let $K_i$ be the subcomplex of $P_i$ consisting of all simplexes that meet $|C_i|$ and of 
all their faces.
Then by Lemma \ref{going back} each $p_i(|K_{i+1}|)\subset|K_i|$.
\end{proof}

Given a sequence $y_i$ of points in $|Q|_{[0,\infty)}$ converging to a $y_\infty\in Y$, 
Lemma \ref{convergence} yields an $|L|_{[0,\infty)}$ containing $N\bydef \{y_k\mid k\in\N\}$,
and also the extended mapping telescope $|K|_{[0,\infty]}$ of the inverse sequence 
$\dots\xr{p_1|}|K_1|\xr{p_0|}|K_0|$.
Then $G(N)\subset G(|L|_{[0,\infty)})$ and by \ref{property**} also
$G(|L|_{[0,\infty)})\subset |K|_{[0,\infty)}$, where $|K|_{[0,\infty]}$ is compact.
Thus $G$ is an $X$-$Y$-approaching map.

If $(y_k,t_k)$ is a sequence of points in $|Q|_{[0,\infty)}\x I$ converging to a 
$(y_\infty,t_\infty)\in Y\x I$, then $y_k\to y$, and hence 
$\hat N\bydef \{(y_k,t_k)\mid k\in\N\}$ lies in $|L|_{[0,\infty)}\x I$,
where $|L|_{[0,\infty)}$ is yielded by Lemma \ref{convergence2}.
Then $\Psi(\hat N)\subset\Psi(|L|_{[0,\infty)}\x I)$ and by \ref{property**} also
$\Psi(|L|_{[0,\infty)}\x I)\subset |L|_{[0,\infty)}$, where $|L|_{[0,\infty]}$ is compact.
Thus $\Psi$ is a $Y$-$Y$-approaching homotopy.
\end{proof}

\subsection{The map $\xi$}

\begin{proof}[Proof of Theorem \ref{delta}(c)] 
Whether Isbell's theorem is proved based on the closed embedding in $\R^\infty$
(see \cite{M00}*{Theorem \ref{book:isbell}}) or on the nerves of covers 
(see \cite{M00}*{Remark \ref{book:nerves and cubes}(0,2)}, this can be done uniformly; 
that is, with appropriate metrics on the $|P_i|$ and on $X$, the latter is the limit of 
the former ones in the uniform category (see \cite{M3}*{Theorem \ref{poly:inverse limit}} 
for the details of the second approach).
Moreover, by construction, each simplex of $P_i$ is of diameter $\le 1$, and each
$p^j_i\:|P_j|\to |P_i|$, where $j\ge i$, sends any pair of points at a distance $\le 1$ 
into a pair of points at a distance $\le 2^{i-j}$.
Since each $P_i$ is locally finite, each $|P_i|$ is complete.

Given any $x\in |P_{i+1}|$, we have $p_i(\Delta_x)\subset\Delta_{p_i(x)}$, and hence 
by \ref{property*} also $q_i(\Delta_x)\subset\Delta_{p_i(x)}$.
Since $\Delta_{p_i(x)}$ is of diameter $\le 1$, the distance between $p_i(x)$ and 
$q_i(x)$ is $\le 1$.

Using the standard metric on each $|Q_i|$ (determined by the simplicial complex $Q_i$,
not by $P_i$), the simplicial maps $q_i\:|Q_{i+1}|\to |Q_i|$ are uniformly continuous, 
and this also yields a metric on $Y$ such that it is the inverse limit of the $|Q_i|$ 
in the uniform category.
Since each $Q_i$ is a subdivision of $P_i$, the map $\id\:|Q_i|\to|P_i|$ is uniformly
continuous (this does not use the admissibility of the subdivision).

Then by applying \cite{M2}*{Proposition \ref{metr:A.15}(a)} with $\alpha_i=\beta_i=1$ 
to the maps $\id\:|Q_i|\to |P_i|$ we get a uniformly continuous map $\xi\:Y\to X$
which combines with the maps $\id\:|Q_i|\to |P_i|$ into a uniformly continuous 
level-preserving map $|Q|_{\N^+}\to|P|_{\N^+}$, where $\N^+$ denotes the subset 
$\N\cup\{\infty\}$ of $[0,\infty]$.
It is not hard to see that the latter in turn combines with the map 
$G\:|Q|_{[0,\infty)}\to |P|_{[0,\infty)}$ constructed in the proof of (a) 
into a uniformly continuolus map $\Xi\:|Q|_{[0,\infty]}\to |P|_{[0,\infty]}$.
Hence the fine shape class of $G$ is induced by the homotopy class of $\xi$.
\end{proof}

\begin{proof}[Proof of Theorem \ref{delta}(d)] It is easy to see that the map $\xi\:Y\to X$ of (c)
is defined by $\xi(\tau)=x_\tau$, where $x_\tau$ is given by the following lemma.

\begin{lemma} \label{kaul}
Let $\tau$ be a simplex or Hilbert simplex of the $\Delta$-space $Y$ and let 
$\tau_i=p^\infty_i(\tau)$.
Then each $\tau_i$ lies in a simplex $\sigma_i$ of $P_i$ such that each 
$p_i(\sigma_{i+1})\subset\sigma_i$.
Moreover, $\lim\sigma_i$ consists of a single point $x_\tau\in X$.
\end{lemma}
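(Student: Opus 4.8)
The plan is to choose a single interior thread in $\tau$ and to build each $\sigma_i$ as a \emph{stabilized forward carrier}; the whole point is that the naive choice, the $P_i$-carrier of $\tau_i$, does \emph{not} thread under the $p_i$. First I fix a point $y=(y_0,y_1,\dots)\in\tau$ lying in the relative interior of $\tau$, so that $y_i$ lies in the relative interior of $\tau_i$ for every $i$. Such a $y$ exists because each restriction $q_i\colon\tau_{i+1}\to\tau_i$ is a surjective simplicial map and hence carries the open simplex onto the open simplex, so $y_0$ can be lifted step by step. For interior $y_i$, the smallest $P_i$-simplex $\Delta_{y_i}$ containing $y_i$ is exactly the $P_i$-carrier of $\tau_i$, so $\tau_i\subseteq\Delta_{y_i}$.

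Next I exploit the two simplicial-approximation relations recorded in the construction of $Y$: for $x\in|P_{i+1}|$ one has $p_i(\Delta_x)\subseteq\Delta_{p_i(x)}$ and $q_i(\Delta_x)\subseteq\Delta_{p_i(x)}$, whence by composition $p^j_i(\Delta_w)\subseteq\Delta_{p^j_i(w)}$ for $w\in|P_j|$. The key computation is that the $P_i$-simplexes $\Delta_{p^j_i(y_j)}$ \emph{increase with $j$}: since $y_j=q_j(y_{j+1})$, the relation $q_j(\Delta_{y_{j+1}})\subseteq\Delta_{p_j(y_{j+1})}$ gives $y_j\in\Delta_{p_j(y_{j+1})}$, and applying $p^j_i$ yields $p^j_i(y_j)\in p^j_i(\Delta_{p_j(y_{j+1})})\subseteq\Delta_{p^{j+1}_i(y_{j+1})}$; hence $\Delta_{p^j_i(y_j)}\subseteq\Delta_{p^{j+1}_i(y_{j+1})}$. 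As each $P_i$ is finite-dimensional, this increasing chain of faces stabilizes, and I set $\sigma_i\bydef\bigcup_{j\ge i}\Delta_{p^j_i(y_j)}$, a single simplex of $P_i$. Then $\tau_i\subseteq\Delta_{y_i}\subseteq\sigma_i$ (the term $j=i$, where $p^i_i=\id$), and the threading $p_i(\sigma_{i+1})\subseteq\sigma_i$ is immediate, since $p_i(\Delta_{p^j_{i+1}(y_j)})\subseteq\Delta_{p^j_i(y_j)}\subseteq\sigma_i$ for every $j\ge i+1$ by the composed relation.

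Finally, for the last assertion I use the contraction estimate established for the scalable sequence, namely that $p^j_i$ sends pairs at distance $\le1$ to pairs at distance $\le2^{i-j}$, together with the fact that every simplex of $P_i$ has diameter $\le1$. The inverse system $\{\sigma_i,\,p_i|\}$ of nonempty compact simplexes has nonempty limit, and for any two threads $(w_i),(w_i')\in\lim\sigma_i$ and any $i$ one gets $d(w_i,w_i')=d(p^j_i(w_j),p^j_i(w_j'))\le2^{i-j}\to0$, so $\lim\sigma_i$ is a single point $x_\tau\in X$. To match the map of~(c), I note that each $p^j_i(y_j)\in\sigma_i$, so the point $z_i\bydef\lim_{j}p^j_i(y_j)$ (which exists because the discrepancy $d(p_j(y_{j+1}),q_j(y_{j+1}))\le1$ is contracted to $\le2^{i-j}$, making the sequence Cauchy in the complete space $|P_i|$) lies in the closed set $\sigma_i$ and defines the thread $\xi(y)=(z_i)\in\lim\sigma_i$; thus $x_\tau=\xi(y)$, and independence of the interior point $y$ follows since $\operatorname{diam}\tau_i\le1$ forces $\xi$ to be constant on $\tau$. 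The one delicate point throughout is the direction of the carrier containments: it is precisely the monotonicity $\Delta_{p^j_i(y_j)}\subseteq\Delta_{p^{j+1}_i(y_{j+1})}$, powered by the $p$-versus-$q$ comparison, that repairs the failure of the naive carrier to thread.
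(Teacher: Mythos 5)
Your proof is correct and takes essentially the same approach as the paper's: an increasing chain of $P_i$-carriers of forward images under the maps $p^j_i$, made monotone by the ($*$)-comparison between $p_i$ and $q_i$, stabilized using finite-dimensionality of $P_i$, with the single-point conclusion coming from the contraction properties of the scalable sequence. The only cosmetic differences are that you track the carrier of an interior thread $(y_j)$ where the paper tracks the smallest simplex of $P_i$ containing $p^k_i(\tau_k)$, and that you verify the metric contraction directly rather than citing the scalability proposition.
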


Let us note that the points $p^\infty_i(x_\tau)$ may well lie in the boundaries of 
the simplexes $\sigma_i$ and not in their interiors.

\begin{proof}
Since $Q_i$ is a subdivision of $P_i$, some simplex of $P_i$ contains $\tau_i$.
Let $\sigma_{ii}$ be the smallest such simplex.
Let us fix some $k\ge i$.
Since each $p_i$ sends each simplex of $P_{i+1}$ into some simplex of $P_i$, some 
simplex of $P_i$ contains $p^k_i(\sigma_{kk})$ and consequently also $p^k_i(\tau_k)$.
Let $\sigma_{ik}$ be the smallest simplex of $P_i$ containing $p^k_i(\tau_k)$.
Then $p_i(\sigma_{i+1,\,k})\subset\sigma_{ik}$ by the minimality, and consequently 
by \ref{property*} $q_i(\sigma_{i+1,\,k})\subset\sigma_{ik}$.
Hence $\tau_i=q_i(\tau_{i+1})\subset q_i(\sigma_{i+1,\,i+1})\subset\sigma_{i,\,i+1}$.
Thus $\tau_k\subset\sigma_{k,\,k+1}$.
Also $p^k_i(\sigma_{k,\,k+1})\subset\sigma_{i,\,k+1}$ by the minimality.
Hence $p^k_i(\tau_k)\subset p^k_i(\sigma_{k,\,k+1})\subset\sigma_{i,\,k+1}$.
Consequently $\sigma_{ik}\subset\sigma_{i,\,k+1}$ by the minimality.

Since $P_i$ is finite-dimensional, the chain 
$\sigma_{ii}\subset\sigma_{i,\,i+1}\subset\sigma_{i,\,i+2}\subset\dots$
must stabilize at some simplex $\sigma_i=\colim_j\sigma_{ij}$.
Then $p^k_i(\sigma_k)\subset\sigma_i$.
Also $\tau_i\subset\sigma_{ii}\subset\sigma_i$.
Since the inverse sequence $\dots\xr{p_1}|P_1|\xr{p_0}|P_0|$ is scalable,
$\lim\sigma_i$ consists of a single point $x_\tau\in X$ 
(see \cite{M00}*{Proposition \ref{book:scalable}(b)}).
\end{proof}

Given a point $x=(x_0,x_1,\dots)\in X$, let $\sigma_i$ be the smallest simplex 
of $P_i$ containing $x_i$, and let $S_i=\st(\sigma_i,P_i)$.
Clearly $p_i(|S_{i+1}|)\subset |S_i|$, and hence by \ref{property*} also 
$q_i(|T_{i+1}|)\subset |T_i|$, where $T_i$ is the subcomplex of $Q_i$ subdividing $S_i$.
Let $T(x)=\lim\big(\dots\xr{q_1|}|T_1|\xr{q_0|}|T_0|\big)$.
Since $P_i$ is locally finite, $S_i$ is finite.
Also $|S_i|$, being the cone over $\partial\sigma_i*|\lk(\sigma_i,P_i)|$,
is contractible.
Hence $|T_i|=|S_i|$ is a compact contractible polyhedron.
Thus $T(x)$ is a cell-like compactum.

Let us show that $\xi^{-1}(x)=T(x)$.
Given a simplex or Hilbert simplex $\tau$ of the $\Delta$-space $Y$, let $\tau_i$, 
$\sigma_i$ and $x_\tau$ be given by Lemma \ref{kaul}.
If $x_\tau=x$, then each $x_i\in\sigma_i$ and hence $\sigma_i\in S_i$.
Therefore each $\tau_i\in T_i$ and so $\tau\subset T(x)$.
Conversely, if $\tau\subset T(x)$, then each $\tau_i\in T_i$ and hence each $\sigma_i$ 
meets $|S_i|$.
Therefore $x_\tau=x$.
\end{proof}

\subsection{Issues of dimension}

\begin{proof}[Proof of Theorem \ref{delta}(e)] 
If $\dim X\le n$, then we may assume that each $\dim P_i\le n$ 
(see \cite{M00}*{Remark \ref{book:nerves and cubes}}), and then we get $\dim Y\le n$.
\end{proof}

\begin{proof}[Proof of Theorem \ref{delta}(f)] 
If $X$ is locally finite dimensional, then we may assume that $X$ is a union of 
its closed subsets $X_1\subset X_2\subset\dots$ satisfying $X_i\subset\Int X_{i+1}$ 
such that for each $i$ and $j$ the smallest subcomplex $K_{ij}$ of $P_i$ satisfying 
$p^\infty_i(X_j)\subset|K_{ij}|$ is of dimension $\le j$ 
(see \cite{M00}*{Remark \ref{book:nerves and cubes}}).
Moreover, by the proof of Isbell's theorem 
(see again \cite{M00}*{Remark \ref{book:nerves and cubes}}) we may assume that 
each $P_i$ coincides with its smallest subcomplex $\tilde P_i$ such that 
$p^\infty_i(X)\subset|\tilde P_i|$.

By the minimality each $p_i(|K_{i+1,j}|)\subset |K_{ij}|$, and hence by \ref{property*}
also each $q_i(|L_{i+1,j}|)\subset |L_{ij}|$, where $L_{ij}$ is the subcomplex of $Q_i$ 
subdividing $K_{ij}$.
Let $Y_j=\lim\big(\dots\xr{q_1|}|L_{1j}|\xr{q_0|}|L_{0j}|\big)$.
Then $\dim Y_j\le j$.

Let us show that $Y=\bigcup_{i=1}^\infty Y_j$.
Given a simplex or Hilbert simplex $\tau$ of the $\Delta$-space $Y$, let $\tau_i$, 
$\sigma_i$ and $x_\tau$ be as in Lemma \ref{kaul}.
Then $x_\tau\in X_{j-1}$ for some $j$.
Since $X_{j-1}\subset\Int X_j$, we have $x_\tau\notin X^*_j\bydef X\but\Int X_j$.
Let $K^*_{ij}$ be the smallest subcomplex of $P_i$ such that $p^\infty_i(X^*_j)\subset|K^*_{ij}|$.
Since $X_j\cup X^*_j=X$, we have $K_{ij}\cup K^*_{ij}=\tilde P_i=P_i$ for each $i$.
Since $\{x_\tau\}$ is compact and disjoint from $X^*_j$, $\sigma_i$ must be disjoint 
from $|K^*_{ij}|$ as long as $i$ is large enough.
Hence $\sigma_i\in K_{ij}$ when $i$ is large enough.
Therefore $\tau_i\in L_{ij}$ when $i$ is large enough.
Since each $q_i(\tau_{i+1})=\tau_i$ and each $q_i(|L_{i+1}|)\subset |L_i|$, we actually have 
$\tau_i\in L_{ij}$ for all $i$.
Hence $\tau$ is a simplex of $Y_j$.
Thus $Y=\bigcup_{i=1}^\infty Y_j$.

Let us show that each $Y_{j-1}\subset\Int Y_j$.
Let $L^*_{ij}$ be the subcomplex of $Q_i$ subdividing $K^*_{ij}$.
Let $Y^*_j=\lim\big(\dots\xr{q_1|}|L^*_{1j}|\xr{q_0|}|L^*_{0j}|\big)$.
Since $K_{ij}\cup K^*_{ij}=P_i$, we have $L_{ij}\cup L^*_{ij}=Q_i$ and consequently $Y_j\cup Y^*_j=Y$.
Since $Y^*_j$ is closed, $Y\but Y^*_j$ is open and hence lies in $\Int Y_j$.
So it suffices to show that $Y_{j-1}\subset Y\but Y^*_j$.
Suppose on the contrary that $Y_{j-1}\cap Y^*_j$ contains some point $y$.
Being $\Delta$-subspaces of $Y$, both $Y_{j-1}$ and $Y^*_j$ then contain the minimal simplex $\tau$
of $Y$ containing $y$.
Then each $\tau_i\bydef q^\infty_i(\tau)$ belongs to both $L_{i,j-1}$ and $L^*_{ij}$.
Let $\sigma_i$ and $x_\tau$ be given by Lemma \ref{kaul}.
Then $\sigma_i$ meets both $|K_{i,j-1}|$ and $|K^*_{ij}|$.
Therefore $x_\tau$ belongs to both $X_{j-1}$ and $X^*_{j-1}$.
Since $X_{j-1}\subset\Int X_j$ and $X^*_j=X\but\Int X_j$, we get a contradiction.
\end{proof}

\begin{remark} There is an alternative way of showing that every locally finite dimensional
Polish space $X$ is fine shape equivalent to a locally finite-dimensional Polish $\Delta$-space,
bypassing the proof of Theorem \ref{delta}(f).
By \cite{M00}*{Proposition \ref{book:lc-telescope-a}} $X$ is homotopy equivalent
to a mapping telescope of its finite-dimensional closed subspaces.
Now the proof of Theorem \ref{delta}(e) can be used to show that the latter is fine shape
equivalent to a mapping telescope of finite-dimensional Polish $\Delta$-spaces.
Clearly the latter is a locally finite-dimensional Polish $\Delta$-space.
\end{remark}

\begin{example} When applied to a strongly countably dimensional Polish space $X$,
the construction of Theorem \ref{delta} need not produce a strongly countably dimensional 
$\Delta$-space.
Indeed, let us consider the simplicial complex $\Delta^\infty\bydef \bigcup_{i=1}^\infty\Delta^n$,
where each $\Delta^n$ is the $n$-simplex.
Let $P_0=\Delta^\infty$ and let $P_n=P_{n-1}^*=P_0^{\sktr n}$ 
(see \cite{M00}*{\ref{book:canonical4}}). 
Thus $X=|\Delta^\infty|$ is strongly countably dimensional.
Suppose that the admissible subdivision $P_n'$ of $P_n$ and the simplicial map 
$q_n\:|P_n'|\to |P_n|$ are chosen so that the interior of each $n$-simplex $\sigma$ of $P_n$ 
contains an $n$-simplex $\sigma'$ of $P_n'$, and $q_n$ sends every such $\sigma'$ onto $\sigma$.
Then it is easy to see that every point of $Y$ will be contained in a Hilbert simplex.
\end{example}

\section{$\nabla$-resolution}

Let us call a $\Delta$-space {\it $\Delta$-locally finite} if it can be given by an inverse 
sequence $\dots\to|K_1|\to|K_0|$ of metric simplicial complexes $K_i$ that are locally finite.
Let us note that $\Delta$-resolution of Polish spaces (see Theorem \ref{delta}) always produces 
$\Delta$-locally finite $\Delta$-spaces.

\begin{theorem} \label{nabla}
Let $X$ be a $\Delta$-locally finite $\Delta$-space.

(a) If $\dim X\le n$, then $X$ is fine shape equivalent to a $\nabla$-space $Y$ 
of dimension $\le n$.

Moreover, there is a cell-like perfect map $\zeta\:Y\to X$ which is a fine shape equivalence.

(b) If $X$ is locally finite-dimensional, or more generally decomposable, then it is fine shape 
equivalent to a locally finite-dimensional $\nabla$-space $Y$.

Moreover, in both (a) and (b) $Y$ is $\Delta$-locally finite.
\end{theorem}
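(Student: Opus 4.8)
The plan is to push the given $\Delta$-resolution through the non-degenerate resolution construction $K\mapsto\hat K^\infty$ advertised in the introduction, and then to transport the fine shape equivalence across the resulting tower using the deformation retraction of Theorem \ref{def-retr2}. First I would present $X$ by an inverse sequence $\dots\xr{p_1}|K_1|\xr{p_0}|K_0|$ of simplicial maps between locally finite metric simplicial complexes; this is available by $\Delta$-local finiteness, and by Lemma \ref{surjective} the $p_i$ may be taken surjective. Replacing each $K_i$ by its barycentric subdivision $K_i^\flat$ via Lemma \ref{inverse subdivision} (which alters neither the space $X$ nor the standing hypotheses), each bonding map becomes the map $K_{i+1}^\flat\to K_i^\flat$ induced by a simplicial map. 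I would then form the inverse sequence of non-degenerate resolutions $\dots\to\hat K_1^\infty\to\hat K_0^\infty$, whose bonding maps are the \emph{non-degenerate} lifts of the $p_i$ produced by the construction, together with the compatible simplicial surjections $\pi_i\colon\hat K_i^\infty\to K_i^\flat$. Setting $Y=\lim|\hat K_i^\infty|$, the non-degeneracy of the bonding maps makes $Y$ a $\nabla$-space, and local finiteness of the $\hat K_i^\infty$ (which I would verify is inherited from that of the $K_i$) keeps $Y$ $\Delta$-locally finite. The maps $\pi_i$ assemble into the desired continuous map $\zeta\colon Y\to X$.

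Next I would analyze $\zeta$. Its point-inverses are inverse limits of the point-inverses of the $\pi_i$, so to see that $\zeta$ is cell-like and perfect I would combine local finiteness (which makes the fibers compact and $\zeta$ closed) with the deformation retraction of $\hat K_i^\infty$ onto the copy of $K_i^\flat$ supplied by Theorem \ref{def-retr2}, which exhibits each fiber as cell-like. To promote $\zeta$ to a fine shape equivalence I would imitate the telescope argument of Theorem \ref{delta}: choosing the deformation retractions to commute with the bonding maps yields a homotopy equivalence between the mapping telescopes of the two towers, and the approaching-map and approaching-homotopy bookkeeping used in the proof of Theorem \ref{delta} then shows that the fine shape class of $\zeta$ is an isomorphism. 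Here the deformation retraction is doing the essential work, since in the infinite-dimensional setting a cell-like map need not be a fine shape equivalence.

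For the dimension assertion in (a) I would arrange that the resolution is dimension-preserving: because the non-degenerate resolution is built from the subdivision of a simplex into products of simplexes (Example \ref{grayson}), each $\hat K_i^\infty$ has dimension $\le\dim K_i\le n$ when $\dim X\le n$; and since the bonding maps of $Y$ are non-degenerate (hence embed every simplex), each tower of simplexes in $Y$ has constant dimension, so $\dim Y\le n$. Part (b) I would reduce to (a). When $X$ is decomposable, Proposition \ref{decomposable} writes it as an increasing union of finite-dimensional normal $\Delta$-subspaces, Theorem \ref{telescope-decomposition}(a) identifies $X$ up to homotopy with their mapping telescope, and applying (a) stage by stage and splicing the resulting finite-dimensional $\nabla$-spaces along the telescope produces a locally finite-dimensional $\nabla$-space fine shape equivalent to $X$ (compare Corollary \ref{lfd-nabla}); the locally finite-dimensional case is the special case recorded in the Proposition following Corollary \ref{lfd-nabla}.

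The main obstacle I expect is precisely the step promoting the cell-like map $\zeta$ to a fine shape equivalence outside the finite-dimensional compact setting: one cannot appeal to a blanket ``cell-like implies fine shape equivalence'' statement, so the commuting deformation retractions of Theorem \ref{def-retr2} must be organized into telescope-level homotopy equivalences and then converted into approaching maps exactly as in the proof of Theorem \ref{delta}. The secondary points requiring care are checking that the non-degenerate resolution preserves local finiteness (so that $Y$ stays $\Delta$-locally finite and $\zeta$ stays perfect) and that it preserves dimension (so that $\dim Y\le n$ in part (a)).
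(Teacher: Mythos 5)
Your overall architecture is the same as the paper's (non-degenerate resolution of the terms of an inverse sequence, then a telescope/approaching-map argument modeled on Theorem \ref{delta}, with Theorem \ref{telescope-decomposition} entering in part (b)), but there is a concrete, fatal error in your choice of resolution: you work with $\hat K^\infty$ throughout, and your claim that $\dim\hat K_i^\infty\le\dim K_i$ is false. If $K$ is a single vertex $v$, then $FP(K)\x\N\cong\N$ and every chain $(v,j_1)<\dots<(v,j_k)$ with $j_1<\dots<j_k$ projects non-degenerately to $\Delta^\infty$, so $\hat K^\infty\cong\Delta^\infty$; in general $\hat K^\infty$ is infinite-dimensional for every nonempty $K$. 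This breaks everything you need: taking $X$ a point (all $K_i$ points, all $p_i$ identities) your $Y=\lim|\hat K_i^\infty|=|\Delta^\infty|$, so the dimension bound in (a) fails; $\hat K^\infty$ is never locally finite (the vertex $(\sigma,j)$ lies in the simplex spanned by $(\sigma,j)<(\sigma,j+1)<\dots<(\sigma,j+k)$ for every $k$), so $Y$ is not $\Delta$-locally finite and your verification ``local finiteness is inherited'' cannot succeed; and the fiber of $\pi_\infty$ over the barycenter of an $m$-simplex $\sigma$ is the infinite complex $S(\sigma)=\bigcup_{n\ge m}Q(\sigma,n)$, which is not compact, so $\zeta$ is not perfect (and its fibers are not cell-like compacta). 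The same defect kills (b): splicing infinite-dimensional pieces cannot yield a locally finite-dimensional $\nabla$-space. The fix is exactly the paper's device: use the truncated resolution $\hat K^n\subset K^\flat\boxtimes\Delta^n$. A simplex of $\hat K^n$ is a chain whose $[n]$-coordinates strictly increase, hence has at most $n+1$ vertices, so $\dim\hat K^n\le n$ regardless of $\dim K$; moreover $\hat K^n$ is locally finite when $K$ is, and the fibers $Q(\sigma,n)$ are compact and collapsible (Lemma \ref{collapse2}). For (b) the nesting $\hat K^0\subset\hat K^1\subset\dots$ of truncated resolutions (applied to subcomplexes $L_{in}$ triangulating the pieces $X_n$) is what makes your ``stage by stage splicing'' coherent at the simplicial level, via Lemma \ref{telescope-collapse}; applying (a) as a black box only gives fine shape morphisms, which cannot be spliced into a telescope.

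Two secondary points. First, your assertion that the deformation retractions can be ``chosen to commute with the bonding maps'' is unjustified and almost certainly unachievable: the section $e_i\:K_i'\to\hat K_i$ does not commute with the bonding maps (only $\pi_i$ does), which is precisely why the paper re-represents $X'$ as $\lim(Q_i,q_i)$ with $Q_i=|\hat K_i|$ and $q_i=e_i\circ p_i\circ\pi_{i+1}$, and then builds homotopies $\hat p_i\simeq q_i$ from the deformation retraction inside each single $|\hat K_i|$; the approaching-map estimates are powered by the invariance (``Moreover'') clause of Theorem \ref{def-retr}, not by any commutation. Second, your reliance on Theorem \ref{def-retr2} is both unnecessary and a warning sign: the paper explicitly notes that the subsection containing it is not used elsewhere, since deformation retracting $|\hat K^\infty|$ requires delicate rearrangement of infinitely many collapses (cf.\ Example \ref{infinite collapse}); once you truncate to $\hat K^n$, the elementary Proposition \ref{collapse} and Theorem \ref{def-retr} suffice.
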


In order to prove Theorem \ref{nabla} we first describe and study an auxiliary construction, 
which is purely combinatorial.

\subsection{Non-degenerate resolution of a simplicial complex}

Let us recall that the order complex $\Delta(P)$ of a poset $P$ is the simplicial complex whose 
vertices are the elements of $P$ and whose simplexes are the finite chains of $P$.
Although $\Delta(P)$ is defined as an abstract simplicial complex, it is naturally identified with 
an affine simplicial complex in the vector space $\R[P]$ of all finite linear combinations of 
the elements of $P$.

Let $K$ be a simplicial complex and $K^\flat$ be its barycentric subdivision, which may be regarded 
as the order complex $\Delta\big(FP(K)\big)$ of the poset $FP(K)$ of all nonempty simplexes of $K$ 
ordered by inclusion.
Let $[n]=\{0,\dots,n\}$ and $\N=\{0,1,\dots\}$ be regarded as posets (with the usual linear order),
and let $\Delta^n$ and $\Delta^\infty$ be their order complexes.
Thus $\Delta^n$ is the simplicial complex consisting of the $n$-simplex and all its faces, and 
$\Delta^0\subset\Delta^1\subset\dots$ are subcomplexes of $\Delta^\infty$.
Let $D\:FP(K)\to\N$ be the monotone map defined by assigning to each simplex of $K$ its dimension.
Upon passing to the order complexes we get a simplicial map $d\:K^\flat\to\Delta^\infty$.
Let us note that if $\dim K\le n$, then $d(K^\flat)\subset\Delta^n$.

The order complex $K^\flat\boxtimes\Delta^\infty\bydef \Delta\big(FP(K)\x\N\big)$ is a triangulation of 
the product $|K^\flat|\x|\Delta^\infty|$ such that the projections onto its factors are 
triangulated by simplicial maps $P_\infty\:K^\flat\boxtimes\Delta^\infty\to K^\flat$ and 
$K^\flat\boxtimes\Delta^\infty\to\Delta^\infty$.
For each $n\in\N$ we also have the subcomplex 
$K^\flat\boxtimes\Delta^n\bydef \Delta\big(FP(K)\x[n]\big)$ of $K^\flat\boxtimes\Delta^\infty$.%
\footnote{For the reader who is familiar with cone complexes (see \cite{M00}) we can offer
a more geometric description of these and subsequent constructions.
Let $\Delta^\N$ and $\Delta^{[n]}$ denote the cone complexes corresponding to the posets
$\N$ and $[n]$. 
Then $K^\flat\boxtimes\Delta^\infty$ is the barycentric subdivision $\big(K\x\Delta^\N\big)^\flat$ 
of the cone complex $K\x\Delta^\N$, and similarly 
$K^\flat\boxtimes\Delta^n=\big(K\x\Delta^{[n]}\big)^\flat$.
Let us note that $|K^\flat\boxtimes\Delta^\infty|=|K\x\Delta^\N|=|K|\x|\Delta^\N|$ as spaces
(see \cite{M00}*{Lemma \ref{book:barycentric-admissible} and Theorem \ref{book:cell-product}}).}

Let $\hat K^\infty$ be the subcomplex of $K^\flat\boxtimes\Delta^\infty$ consisting of all its 
simplexes that project non-degenerately into $\Delta^\infty$.
This will be our ``non-degenerate resolution'' of $K$.
Also let $\hat K^n$ be the subcomplex of $K^\flat\boxtimes\Delta^n$ consisting of all its 
simplexes that project non-degenerately into $\Delta^n$.
The joint map $\id\x D\:FP(K)\to FP(K)\x\N$ is an injective monotone map.
Upon passing to the order complexes, it induces a simplicial embedding 
$e\:K^\flat\to K^\flat\boxtimes\Delta^\infty$,
which is a section of the projection $P_\infty\:K^\flat\boxtimes\Delta^\infty\to K^\flat$.%
\footnote{Geometrically, $D$ corresponds to a conical map $\bar D\:K\to\Delta^\N$.
The joint map $\id_K\x\bar D\:K\to K\x\Delta^\N$ is an injective conical map.
Upon passing to the barycentric subdivisions, it induces the simplicial embedding $e$.}
Clearly, $e(K^\flat)\subset\hat K^\infty$.
Moreover, if $\dim K\le n$, then $e(K^\flat)\subset\hat K^n$.

Now here is the point of this construction.
Given a simplicial map $f\:K\to L$, it induces a monotone map 
$FP(f)\x\id_\N\:FP(K)\x\N\to FP(L)\x\N$, and upon passing to the order complexes
we get a simplicial map 
$f^\flat\boxtimes\id_{\Delta^\infty}\:K^\flat\boxtimes\Delta^\infty\to L^\flat\boxtimes\Delta^\infty$.%
\footnote{Geometrically, $f$ induces a conical map 
$f\x\id_{\Delta^\N}\:K\x\Delta^\N\to L\x\Delta^\N$, and upon passing to the barycentric subdivisions
we get the simplicial map $f^\flat\boxtimes\id_{\Delta^\infty}$.}
The latter restricts to a {\it non-degenerate} simplicial map 
$\hat f^\infty\:\hat K^\infty\to\hat L^\infty$.
Moreover, the following diagrams of simplical maps are commutative:
\[\begin{CD}
\hat K^\infty@>\hat f^\infty>>\hat L^\infty\\
@Vp_\infty VV@Vp_\infty VV\\
K^\flat@>f^\flat>>L^\flat
\end{CD}
\qquad\text{ and }\qquad
\begin{CD}
\hat K^\infty@>\hat f^\infty>>\hat L^\infty\\
@Ae AA@Vp_\infty VV\\
K^\flat@>f^\flat>>L^\flat.
\end{CD}\]
When both $K$ and $L$ are of dimension $\le n$, we may replace $\hat K^\infty$, $\hat L^\infty$ and
$\hat f^\infty$ with $\hat K^n$, $\hat L^n$ and $\hat f^n$ in these diagrams.

\begin{example} Suppose that $\dim K=1$.
Then $\hat K^1$ is obtained from $e(K^\flat)$ by attaching a half-edge to every vertex of $e(K^\flat)$.
Given a simplicial map $f\:K\to L$, the induced map of the barycentric subdivisions 
$f^\flat\:K^\flat\to L^\flat$ factors, as noted above, through a non-degenerate simplicial map 
$\hat f^1\:\hat K^1\to\hat L^1$.
Namely, each edge of $K$ that was sent to a vertex of $L$ will now go onto the half-edge attached 
to this vertex.
Also, the half-edges attached to the vertices of $K^\flat$ will go to the half-edges attached
to the corresponding vertices of $L^\flat$.%
\footnote{It may seem that the half-edges attached to the vertices of $K^\flat$ that are 
not vertices of $K$ are superfluous for the purposes of making simplicial maps non-degenerate --- 
and so they are in $\hat K^1$.
But if $K$ is enlarged to a $2$-dimensional complex, these half-edges will no longer be superfluous 
in $\hat K^2$.}
\end{example}

\subsection{Deformation retraction of the non-degenerate resolution}

\begin{theorem} \label{def-retr}
If $\dim K\le n$, then $|\hat K^n|$ strongly deformation retracts onto $|e(K^\flat)|$.

Moreover, for every subcomplex $L$ of $K$, the deformation retraction moves 
$|\hat L^n|$ within itself.
\end{theorem}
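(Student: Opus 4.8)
The plan is to work entirely with the combinatorial model. Identifying $K^\flat\boxtimes\Delta^n$ with the order complex $\Delta\big(FP(K)\x[n]\big)$, a simplex of $\hat K^n$ is a chain $(\sigma_0,k_0)<\dots<(\sigma_m,k_m)$ in $FP(K)\x[n]$ whose heights are \emph{strictly} increasing, $k_0<\dots<k_m$, while the first coordinates satisfy only $\sigma_0\subseteq\dots\subseteq\sigma_m$; the embedded copy $e(K^\flat)$ consists precisely of the ``diagonal'' chains, those in which every vertex $(\sigma_i,k_i)$ satisfies $k_i=\dim\sigma_i$ (so that the $\sigma_i$ are then automatically strictly increasing). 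Thus I must build a strong deformation retraction of $|\hat K^n|$ that removes, one at a time, all \emph{off-diagonal} vertices $(\sigma,k)$ with $k\ne\dim\sigma$. I would realize it as a concatenation of elementary simplicial collapses, taking care that each collapse pairs a chain $c$ with a chain obtained by toggling a vertex $(\sigma_i,k)$ whose first coordinate $\sigma_i$ already occurs among the first coordinates of $c$. This single restriction makes the whole construction local in the first coordinate, so that it restricts verbatim to the full subcomplex $\hat L^n=p_\infty^{-1}(|L^\flat|)\cap\hat K^n$ for every subcomplex $L\subseteq K$; the ``moreover'' clause would then be automatic and would not require separate bookkeeping.

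The construction itself I would carry out by a multi-layered induction. The outer layer builds $K$ one maximal simplex at a time: writing $K=K'\cup\sigma$ with $\sigma$ maximal, the complex $\hat K^n$ is obtained from $\hat{K'}{}^n$ by adjoining the ``column'' $(\sigma,0)<\dots<(\sigma,n)$ together with all chains running through it, and $e(K^\flat)$ gains the single new flag ending at the diagonal vertex $(\sigma,\dim\sigma)$. Using the deformation retraction already produced for $\hat{K'}{}^n$, the problem reduces to retracting this newly attached star onto the new diagonal simplex relative to its attaching region. That star is a prism over the faces of $\sigma$ and the height column $\Delta^n$, and at the very bottom of the induction it is modeled on the subdivision of a simplex into products of simplexes recorded in Example \ref{grayson}; this is the geometric seed from which the explicit collapses grow. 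The inner layers are an induction on the height bound $n$ (peeling off the top height, which can occur only as the final vertex of a chain) together with a finer induction over the skeleta of the link, at each stage pairing every off-diagonal chain with a canonical free face in the height direction.

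The step I expect to be the main obstacle is verifying that these explicit pairings actually assemble into a \emph{valid} deformation retraction --- equivalently, that the discrete matching they define is acyclic, so that the collapses can be performed in a consistent order with no circular dependence, and that free faces opened at one layer are not re-occupied later. A second, related difficulty is that the inductive hypothesis must be strengthened to survive the gluing: I expect to need to carry through not only the relative statement for all subcomplexes $L$ simultaneously but also a ``rel link'' form of the retraction, fixing the already-retracted boundary of each attached star so that the local retractions agree on shared faces. Once the matching is shown to be acyclic and natural in the first coordinate, the strong deformation retraction and the invariance of every $|\hat L^n|$ follow at once, while the bound $\dim\hat K^n\le n$ (forced by the strictly increasing heights lying in $[n]$) guarantees that the induction terminates.
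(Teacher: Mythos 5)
Your setup is right (the chain description of $\hat K^n$, the diagonal characterization of $e(K^\flat)$, the locality-in-the-first-coordinate reason why the ``moreover'' clause comes for free, and the appearance of the subdivision of Example \ref{grayson}), but the proposal has a genuine gap exactly where you flag it: the matching is never defined and its validity is never checked, and that is not a technical afterthought --- it is the entire content of the proof. ``Pairing every off-diagonal chain with a canonical free face in the height direction'' is not yet a construction: one must say \emph{which} vertex is toggled, and in \emph{which order} the pairs are collapsed, and then verify that each designated face really is free at the moment it is used. In the paper this is Lemma \ref{collapse2}: the fiber of $p_n\:|\hat K^n|\to|K^\flat|$ over the barycenter of an $m$-simplex $\sigma$ of $K^\flat$ is a cell complex $Q(\sigma,n)$ whose $k$-cells are tuples $(A_0<\dots<A_m)$ of nonempty subsets of $[n]$ of total cardinality $k+m+1$ (products of simplexes --- the Grayson subdivision); one sets $\lambda_C$ equal to the length of the initial run with $A_i=\{i\}$, calls $C$ excessive or deficient according to whether $\lambda_C\in A_{\lambda_C}$, pairs each excessive $C$ with $C^-$ (delete $\lambda_C$ from $A_{\lambda_C}$), and collapses in a specific double order (decreasing cell dimension $k$, then increasing $\lambda$), the point being the verification that $C^-$ is a free face of $C$ inside the partial complex $Q^k_l$ --- i.e.\ precisely the no-circular-dependence check you defer. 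Without producing this pairing and this ordering, the proposal is a plan for a proof rather than a proof.

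There is also a structural problem with your outer induction. Building $K$ one maximal simplex at a time gives, for infinite $K$ (the theorem only assumes $\dim K\le n$, not finiteness or even local finiteness), an infinite composition of deformation retractions, and in the metric topology such infinite compositions can genuinely fail to be continuous --- this is exactly the phenomenon of Example \ref{infinite collapse}, and it is why the infinite-dimensional case (Theorem \ref{def-retr2}) requires a separate, delicate argument. The paper avoids the issue entirely by inducting not over simplexes of $K$ but over the skeleta of $K^\flat$: there are only $n+1$ stages $\hat K^n_m\searrow\hat K^n_{m-1}$ (here $\hat K^n_m\bydef p_n^{-1}\big((K^\flat)^{(m)}\big)\cup e(K^\flat)$), and within a single stage the collapses of the fibers $p_n^{-1}(\sigma)$ onto $e(\sigma)\cup p_n^{-1}(\partial\sigma)$, over all $m$-simplexes $\sigma$ of $K^\flat$, happen in disjoint stars and hence can be performed simultaneously. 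A finite composition of such simultaneous deformation retractions is automatically continuous; your ``one maximal simplex at a time'' scheme would need an extra continuity argument that your proposal does not supply, and which in this topology is not free. (Your anticipated ``rel link'' strengthening is indeed needed and is what the paper's fiberwise collapse rel $e(\sigma)\cup p_n^{-1}(\partial\sigma)$ provides.)
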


\begin{definition}\label{collapse-dr} Suppose that $K$ is a simplicial complex which admits an
elementary simplicial collapse onto its subcomplex $L$.
Thus $K\but L=\{\sigma,\tau\}$, where $\sigma$ is not a face of any simplex of $L$
and $\tau$ is a facet of $\sigma$ which is not a face of any simplex of $L$.

The standard strong deformation retraction of $|K|$ onto $|L|$ is defined as follows.
Let $v$ be the vertex of $\sigma$ not contained in $\tau$ and let $\hat\tau$ be 
the barycenter of $\tau$.
Let $J=v*\hat\tau$ and let $h_t\:J\to J$ be defined by $h_t(x)=(1-t)x+tv$.
Thus $h_t$ is a deformation retraction of $J$ onto $v$.
We have $\sigma=J*\partial\tau$ and $\sigma\cap|L|=v*\partial\tau$.
Thus $h_t*\id_{\partial\tau}$ is a deformation retraction of $\sigma$ onto $\sigma\cap|L|$.
Extending it over $|K|$ by $\id_{|L|}$, we obtain a deformation retraction of $|K|$ onto $|L|$.
\end{definition}

Theorem \ref{def-retr} is follows immediately from the following proposition.

\begin{proposition} \label{collapse}
If $\dim K\le n$, then $\hat K^n$ simplicially collapses onto $e(K^\flat)$.

Moreover, for every subcomplex $L$ of $K$, the collapse moves $\hat L^n$ within itself.
\end{proposition}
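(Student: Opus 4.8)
The plan is to realize $\hat K^n$ as collapsing onto $e(K^\flat)$ through a sequence of elementary collapses, encoded as an acyclic matching (a discrete Morse function) on the face poset of $\hat K^n$ whose critical cells are exactly the faces of $e(K^\flat)$; the geometric collapse is then assembled from the standard strong deformation retractions of Definition \ref{collapse-dr}, which is precisely what is needed to deduce Theorem \ref{def-retr}. The first step is a combinatorial reformulation. A simplex of $\hat K^n$ is a chain $(\sigma_0,j_0)<\dots<(\sigma_k,j_k)$ in $FP(K)\x[n]$, and non-degeneracy of the projection to $\Delta^n$ means exactly that the second coordinates are distinct, hence strictly increasing, $j_0<\dots<j_k$, while $\sigma_0\subseteq\dots\subseteq\sigma_k$. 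Since $e$ is induced by $\id\x D$, the faces of $e(K^\flat)$ are precisely the \emph{diagonal} chains, those with $j_i=\dim\sigma_i$ for every $i$ (and then $j_0<\dots<j_k$ forces $\sigma_0\subsetneq\dots\subsetneq\sigma_k$). So the goal is to collapse away every chain that has at least one off-diagonal vertex $(\sigma_i,j_i)$ with $j_i\ne\dim\sigma_i$.

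I would organize the collapse over the simplexes of $K$, building $\hat K^n$ from the bottom up. The local model is the part of $\hat K^n$ lying over a single barycenter: the subcomplex of $\hat K^n$ on which the projection $P_\infty\:K^\flat\boxtimes\Delta^\infty\to K^\flat$ (restricted to $\hat K^n$) is constant equal to $\hat\sigma$ is the full $n$-simplex $\Delta^n$ spanned by $(\sigma,0),\dots,(\sigma,n)$, and this collapses onto its single vertex $(\sigma,\dim\sigma)=e(\hat\sigma)$ by the elementary cone-to-apex collapse, i.e.\ by the matching that toggles the presence of $(\sigma,\dim\sigma)$. This is the product-of-simplexes picture of Example \ref{grayson}, and the base case $\dim K=0$ is exactly a disjoint union of such fiber collapses. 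The inductive step would add the simplexes of $K$ in order of increasing dimension (equivalently, pass from the $(d{-}1)$-skeleton to the $d$-skeleton), extending the collapse relatively: on a newly admitted chain, whose top first coordinate is a $d$-simplex $\delta$, one wants the diagonal vertex $(\delta,d)$ of $\delta$ to be the pivot while the already-collapsed lower part is carried along unchanged.

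The main obstacle is that the naive global rule ``toggle the diagonal vertex $(\sigma,\dim\sigma)$ of the least un-diagonalized first coordinate'' is \emph{not} a well-defined matching, because an off-diagonal vertex with an anomalous second coordinate can block the intended diagonal vertex from being comparable to the rest of the chain. For instance, for $\rho\subsetneq\rho'$ with $\dim\rho=3$, the off-diagonal chain $\{(\rho,0),(\rho',1)\}$ of $\hat K^n$ ($n\ge4$) cannot be matched by inserting the diagonal vertex $(\rho,\dim\rho)=(\rho,3)$, since $(\rho,3)$ is incomparable to $(\rho',1)$; one is forced instead to toggle a different diagonal vertex, here $(\rho',\dim\rho')$. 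Thus the rule must adaptively select an unblocked diagonal vertex, and the chains must be diagonalized in a carefully chosen order that corrects the second coordinates so that at each stage the chosen pivot is unblocked. Proving that these choices assemble into a \emph{complete} matching with no gradient cycles is the crux, and is exactly where the multi-layered induction (and the explicit prism collapses of Example \ref{grayson}) enters; I would expect acyclicity to come from showing the gradient strictly decreases a well-founded complexity, such as the lexicographically read multiset of off-diagonal discrepancies of the chain.

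Finally, the ``moreover'' clause should be essentially automatic, provided the matching is defined \emph{locally}, that is, so that the pivot of a chain $C$ and its toggled vertex depend only on the simplexes of $K$ occurring as first coordinates of $C$ together with their dimensions. For a subcomplex $L$ of $K$, the set $FP(L)$ is a down-closed subset of $FP(K)$ on which $D$ restricts correctly, so $\hat L^n$ consists exactly of those chains of $\hat K^n$ all of whose first coordinates lie in $L$. Any toggled vertex $(\rho,\dim\rho)$ then has $\rho$ equal to, or a face of, a simplex already present in $C$, hence $\rho\in L$, so the matching partner of a chain of $\hat L^n$ again lies in $\hat L^n$. Therefore the matching restricts to one on $\hat L^n$ whose critical cells are the faces of $e(L^\flat)$, the resulting collapse moves $\hat L^n$ within itself, and the associated deformation retraction of Theorem \ref{def-retr} preserves $|\hat L^n|$.
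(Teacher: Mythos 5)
Your combinatorial setup is correct (simplexes of $\hat K^n$ are chains in $FP(K)\times[n]$ with strictly increasing second coordinates, and $e(K^\flat)$ consists of the diagonal chains), and you have correctly located the difficulty: the naive matching that toggles the diagonal vertex $(\sigma,\dim\sigma)$ is not well defined because of blocking. But that is exactly where your proposal stops. The adaptive pivot rule is never specified, completeness of the matching is never established, and acyclicity is deferred to an unnamed ``well-founded complexity''. This missing content is precisely the heart of the paper's proof (Lemma \ref{collapse2}), so what you have is a proof plan with a genuine gap at its crux, not a proof.

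It is worth seeing how the paper circumvents the blocking problem, because its solution is organized differently from what you sketch. Instead of matching chains globally while inducting over simplexes of $K$, the paper fibers $\hat K^n$ over $K^\flat$ via the simplicial projection $p_n$ and inducts over the skeleta of $K^\flat$ --- that is, over chains of simplexes of $K$ rather than over single simplexes of $K$. The preimage of an $m$-simplex $\sigma$ of $K^\flat$ is governed by the fiber over its barycenter $\hat\sigma$, a cell complex $Q(\sigma,n)$ whose cells are products of simplexes, encoded by tuples $(A_0,\dots,A_m)$ of nonempty subsets of $[n]$ with $A_0<\dots<A_m$ (your local model is the case $m=0$, where $Q$ is all of $\Delta^n$). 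Within such a fiber the first coordinates are frozen --- they form the fixed chain of vertices of $\sigma$ --- and the pairing toggles membership of the position index $\lambda_C$ (the length of the initial run $A_i=\{i\}$) in the set $A_{\lambda_C}$. Note that the pivot is $(v_\lambda,\lambda)$, indexed by position in the chain, not $(v_\lambda,\dim v_\lambda)$; since $A_{\lambda-1}=\{\lambda-1\}$ and a deficient cell has $\min A_\lambda\ge\lambda+1$, this toggle is always order-compatible, so blocking never occurs. Moreover, the paper realizes the collapse as an explicitly ordered sequence of elementary collapses (organized by skeleta $Q^k$ and levels $Q^k_l$), checking at each step that the face removed is free, so no separate acyclicity argument is needed; the fiber collapses are then assembled into a collapse of the subcomplex triangulating $p_n^{-1}(\sigma)$ onto the one triangulating $e(\sigma)\cup p_n^{-1}(\partial\sigma)$, and induction on the skeleta of $K^\flat$ finishes. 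Your argument for the ``moreover'' clause (locality of the matching implies the collapse preserves $\hat L^n$) is sound in outline and is indeed how the relative statement follows in the paper, but it can only be made precise once the matching actually exists.
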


\begin{proof} The projection $P_n\:|K^\flat|\x|\Delta^n|\to|K^\flat|$ is
triangulated by a simplicial map $K^\flat\boxtimes\Delta^n\to K^\flat$.
Let $p_n\:|\hat K^n|\to |K^\flat|$ be its restriction to $|\hat K^n|$.
We will first show that for each simplex $\sigma$ of $K^\flat$, the point-inverse 
$p_n^{-1}(\hat\sigma)$ of its barycenter is collapsible.
In order to do that, we in turn need to understand the cell complex that triangulates 
$p_n^{-1}(\hat\sigma)$.

Let $v_0,\dots,v_m$ be the vertices of $\sigma$.
A simplex $\tau$ of $K^\flat\boxtimes\Delta^n$ which projects onto $\sigma$ is of the form
$\tau=\tau_0*\dots*\tau_m$, where each $\tau_i$ projects onto $v_i$.
Clearly, $\tau\cap P_n^{-1}(\hat\sigma)=\tau_0\x\dots\x\tau_m$.
All such cells $\tau_0\x\dots\x\tau_m$ form a cell complex $R=R(\sigma,n)$ cellulating the simplex 
$P_n^{-1}(\hat\sigma)$.
Now $p_n^{-1}(\hat\sigma)$ is cellulated by the subcomplex $Q=Q(\sigma,n)$ of $R$ consisting 
of all its cells $\tau_0\x\dots\x\tau_m$ such that $\tau=\tau_0*\dots*\tau_m$ projects 
non-degenerately into $|\Delta^n|$.

The vertices of $\sigma$ form a chain $v_0<\dots<v_m$ in the poset $FP(K)$.
The vertices of $\tau$ form a chain   
\[\begin{NiceMatrix}
&(v_0,\,j_1)&<&\dots&<&(v_1,\,j_{n_0})\\
<&(v_1,\,j_{n_0+1})&<&\dots&<&(v_1,\,j_{n_0+n_1})\\
\\ \hdottedline\\
<&(v_m,\,j_{n_0+\dots+n_{m-1}+1})&<&\dots&<&(v_m,\,j_{n_0+\dots+n_m}).
\end{NiceMatrix}\]
in the poset $FP(K)\x[n]$.
Thus $0\le j_1\le\dots\le j_{n_0+\dots+n_m}\le n$, and from the strict inequalities between 
the vertices of $\tau$ we get the following more specific inequalities:
\[\begin{NiceMatrix}
{0\quad} &\le &j_1&<&\dots&<&j_{n_0}&\\
&\le&j_{n_0+1}&<&\dots&<&j_{n_0+n_1}&\\
\\ \hdottedline\\
&\le&j_{n_0+\dots+n_{m-1}+1}&<&\dots&<&j_{n_0+\dots+n_m}&\le\quad n.
\end{NiceMatrix}\]
Finally, $\tau$ projects non-degenerately into $|\Delta^n|$ if and only if 
$j_1<\dots<j_{n_0+\dots+n_m}$.
Let us note that $n_0+\dots+n_m$ is the number of vertices of the polytope \
$\tau_1^{n_1-1}\x\dots\x\tau_m^{n_m-1}$, and its dimension is $n_0+\dots+n_m-(m+1)$.

Thus a $k$-cell $C$ of $R$ is determined by an $(m+1)$-tuple $(A_0,\dots,A_m)$ of nonempty 
subsets of $[n]$ of total cardinality $k+m+1$ such that $A_0\le\dots\le A_m$, where $X\le Y$ means 
that $x\le y$ for each $x\in X$ and each $y\in Y$.
A cell $D$ is a face of $C$ if it corresponds to an $(m+1)$-tuple $(B_0,\dots,B_m)$ such that 
each $B_i\subset A_i$.
The cell $C$ belongs to $Q$ if and only if $A_0<\dots<A_m$, where $X<Y$ means that $x<y$ for 
each $x\in X$ and each $y\in Y$.

\begin{example} \label{grayson} Let us see how the subdivision $R$ of the simplex 
$P_n^{-1}(\hat\sigma)$ into products of simplexes looks like in a couple of examples.
(In fact, this subdivision appears to be the same as Grayson's \cite{Gra}*{\S4}; 
see also \cite{EG}*{p.\ 708}.)

For instance, if $n=2$, $m=1$ and $k=2$, we have the following pairs $(A_0, A_1)$ of 
total cardinality $4$: $(\{0\},\,\{0,1,2\})$, $(\{0,1\},\,\{1,2\})$ and $(\{0,1,2\},\,\{2\})$. 
They correspond to the $2$-cells of a cellular subdivision of $\Delta^2$: two homothetic copies 
of $\Delta^2$ at the initial and terminal vertices of $\Delta^2$ and a square in the middle.

If $n=3$, $m=1$ and $k=3$, we have the following pairs $(A_0, A_1)$ of total cardinality $5$:
$(\{0\},\,\{0,1,2,3\})$, $(\{0,1\},\,\{1,2,3\})$, $(\{0,1,2\},\,\{2,3\})$ and 
$(\{0,1,2,3\},\,\{3\})$.
They correspond to the $3$-cells of a cellular subdivision of $\Delta^3$: two homothetic copies 
of $\Delta^3$ at the initial and terminal vertices of $\Delta^3$ and two triangular prisms in 
the middle.
\end{example}

\begin{lemma} \label{collapse2}
$Q$ is collapsible.
\end{lemma}

\begin{proof}
We may assume that $n>m$, for otherwise $Q$ is a point and there is nothing to prove.
Let $C$ be a cell of $Q$ and let $(A_0,\dots,A_m)$ be the corresponding tuple of subsets 
of $[n]$.
Let $\lambda=\lambda_C$ be the maximal number such that $A_i=\{i\}$ for all $i<\lambda$.
If $\lambda=m+1$, let us call $C$ {\it terminal}.
Clearly, there is precisely one terminal cell in $Q$, and it is a $0$-cell.
(In fact, this is vertex is nothing but the point $Q(\sigma,m)$ which we have just 
excluded from consideration.)
Assuming that $\lambda\le m$, let us call $C$ {\it excessive} if $\lambda\in A_\lambda$ and 
{\it deficient} otherwise.
If $C$ is excessive, let $C^-$ be the cell of $Q$ corresponding to $(A_0^-,\dots,A_m^-)$, 
where $A_i^-=A_i$ for $i\ne\lambda$ and $A_\lambda^-=A_\lambda\but\{\lambda\}$.
If $C$ is deficient, let $C^+$ be the cell of $Q$ corresponding to $(A_0^+,\dots,A_m^+)$, 
where $A_i^+=A_i$ for $i\ne\lambda$ and $A_\lambda^+=A_\lambda\cup\{\lambda\}$.
Let us note that since $n>m$, all $n$-cells of $Q$ are excessive and all non-terminal $0$-cells of 
$Q$ are deficient.

Let $Q^k$ be the subcomplex of $Q$ consisting of all cells of the skeleton $Q^{(k-1)}$ and of 
all excessive and terminal $k$-cells of $Q$.
Thus $Q^n=Q$ and $Q^0$ consists only of the terminal cell.
To show that $Q$ is collapsible it suffices to show that $Q^k$ collapses onto $Q^{k-1}$ for each $k>0$.
Assuming that $k>0$, let $Q^k_l$ be the subcomplex of $Q^k$ consisting of all cells of $Q^{k-1}$ 
and of all cells $C$ in $Q^k\but Q^{k-1}$ such that $\lambda_C\ge l$.
Thus $Q^k_0=Q^k$ and $Q^k_{m+1}=Q^{k-1}$.
To show that $Q^k$ collapses onto $Q^{k-1}$ it suffices to show that $Q^k_l$ collapses onto 
$Q^k_{l+1}$ for each $l\le m$.

Let us note that $Q^k_l\but Q^k_{l+1}$ consists of all excessive $k$-cells $C$ of $Q$ 
such that $\lambda_C=l$ and of all deficient $(k-1)$-cells $C$ of $Q$ such that $\lambda_C=l$.
Let $C$ be a cell of the first type.
Then $C^-$ is defined and is a cell of $Q^k_l$ of the second type.
Indeed, let $(A_0,\dots,A_m)$ and $(A_0^-,\dots,A_m^-)$ be the corresponding tuples of subsets of $[n]$.
We have $A_i^-=A_i=\{i\}$ for all $i<l$ and $l\notin A_l^-=A_l\but\{l\}$.
Hence $\lambda_{C^-}=l$ and $C^-$ is deficient.

Suppose that $C^-$ is a face of some other $k$-cell $D$ of $Q^k_l$.
Let $(B_0,\dots,B_m)$ be the corresponding tuple of subsets of $[n]$.
Writing $A^-=A^-_0\cup\dots\cup A^-_m$ and $B=B_0\cup\dots\cup B_m$, we have
$B=A^-\cup\{j\}$ for some $j\notin A^-$.
Since $\{0,\dots,l-1\}\subset A^-\subset B$, we have $j\ge l$.
If $j=l$, then $j\in B_{l-1}$ due to $D\ne C$.
Hence $\lambda_D=l-1$.
Therefore $D\notin Q^k_l$, which is a contradiction.
If $j>l$, then $B_i=A_i^-=\{i\}$ for all $i<l$, and $l\notin B_l$.
Hence $\lambda_D=l$ and $D$ is deficient.
Since $D$ is a deficient $k$-cell, $D\notin Q^k$.
In particular, $D\notin Q^k_l$, which is a contradiction.

Thus $C^-$ is a free face of $C$ in $Q^k_l$.
So $Q^k_l$ collapses onto its subcomplex $\tilde Q^k_l\bydef Q^k_l\but\{C,C^-\}$.
Given another $k$-cell $D$ of $Q^k_l$, by the above $C^-$ is not a face of $D$.
In particular $D^-\ne C^-$, and so $D^-\in\tilde Q^k_l$.
Since $D^-$ is a free face of $D$ in $Q^k_l$, it is also a free face of $D$ in $\tilde Q^k_l$.
Hence $\tilde Q^k_l$ collapses onto $\tilde Q^k_l\but\{D,D^-\}$.
By continuing in this fashion we obtain a collapse of $Q^k_l$ onto $Q^k_{l+1}$.
\end{proof}

We return to the proof of Proposition \ref{collapse}.
Let $\hat K^n_m=p_n^{-1}\big((K^\flat)^{(m)}\big)\cup e(K^\flat)$.
Thus $\hat K^n_n=\hat K^n$ and $\hat K^n_{-1}=e(K^\flat)$.
To obtain the desired collapse of $\hat K^n$ onto $e(K^\flat)$ it suffices to collapse 
each $\hat K^n_m$ onto $\hat K^n_{m-1}$.
For that it in turn suffices to collapse the subcomplex of $\hat K^n$ triangulating
$p_n^{-1}(\sigma)$ onto the subcomplex triangulating $e(\sigma)\cup p_n^{-1}(\partial\sigma)$
for each $m$-simplex $\sigma$ of $K^\flat$.
But such a simplicial collapse is yielded in the obvious way by the collapse of 
$Q=Q(\sigma,n)$ onto its terminal vertex $Q(\sigma,m)$, which is provided by Lemma \ref{collapse2}.
\end{proof}

\subsection{Finite-dimensional $\nabla$-resolution}

\begin{proof}[Proof of Theorem \ref{nabla}(a)]
Let $X$ be given by an inverse sequence $\dots\xr{p_1}|K_1|\xr{p_0}|K_0|$
with locally finite $K_i$.
By the proof of Lemma \ref{surjective}(a) we may assume that the $p_i$ are surjective.
Then each $\dim K_i\le n$.
By Lemma \ref{inverse subdivision} $X$ is homeomorphic via the identity map to 
a $\Delta$-space $X'$ of the form $\lim\big(\dots\xr{p_1}|K'_1|\xr{p_0}|K'_0|\big)$,
where each $K_i'$ is an admissible subdivision of $K_i$ isomorphic to 
the barycentric subdivision.
Let us write $\hat K_i=\widehat{K_i}^n$.
The embedding $e_i\:K_i'\to\hat K_i$ is a section of the projection 
$\pi_i\:\hat K_i\to K_i'$.
Each $p_i\boxtimes\id_{\Delta^n}\:K_{i+1}'\boxtimes\Delta^n\to K_i'\boxtimes\Delta^n$ 
restricts to a non-degenerate simplicial map $\hat p_i\:\hat K_{i+1}\to\hat K_i$,
where each $\dim\hat K_i\le n$.
Each $K_i'\boxtimes\Delta^n$ is locally finite, hence so is $\hat K_i$.

Thus $Y\bydef \lim\big(\dots\xr{\hat p_1}|\hat K_1|\xr{\hat p_0}|\hat K_0|\big)$ is 
a $\Delta$-locally finite $\nabla$-space of dimension $\le n$.
Since the $\hat p_i$ lift the bonding maps $p_i\:K_{i+1}'\to K_i'$ 
with respect to the projections $\pi_i\:\hat K_i\to K_i'$, we obtain 
a continuous map of the extended mapping telescopes 
$\pi_{[0,\infty]}\:|\hat K|_{[0,\infty]}\to |K'|_{[0,\infty]}$,
which restricts to a map $\zeta\:Y\to X'$.
The point-inverses of the complementary map
$\pi_{[0,\infty)}\:|\hat K|_{[0,\infty)}\to |K'|_{[0,\infty)}$
are compact polyhedra, each homeomorphic to the point-inverse of
the barycenter of some simplex of $K_i'$ in $|\hat K_i|$.
Hence by Lemma \ref{collapse2} they are collapsible.
Therefore the point-inverses of $\zeta$, being inverse limits of these,
are compact and cell-like.

In order to prove that $\zeta$ is a fine shape equivalence, it is convenient
to use a different representation of $X'$ as an inverse limit of simplicial maps
between metric simplicial complexes.
Let $Q_i=|\hat K_i|$ and let $q_i\:Q_{i+1}\to Q_i$ denote the composition
$|\hat K_{i+1}|\xr{\pi_{i+1}}|K_{i+1}'|\xr{p_i}|K_i'|\xr{e_i}|\hat K_i|$.
Since the composition $|K_i'|\xr{e_i}|\hat K_i|\xr{\pi_i}|K_i'|$ 
equals the identity, we have $X'=\lim\big(\dots\xr{q_1}Q_1\xr{q_0}Q_0\big)$.
Moreover, if $\rho_i\:|\hat K_i|\to Q_i$ denotes the composition
$|\hat K_i|\xr{\pi_i}|K_i'|\xr{e_i}|\hat K_i|$, then the commutative diagram
\[\begin{CD}
\dots@=Q_i@=|\hat K_{i+1}|@>\hat p_i>>|\hat K_i|@=Q_i@=|\hat K_i|@>\hat p_{i-1}>>\dots\\
@.@V\rho_{i+1}VV@V\pi_{i+1}VV@V\pi_iVV@V\rho_iVV@V\pi_iVV@.\\
\dots@>e_{i+1}>>|\hat K_{i+1}|@>\pi_{i+1}>>|K_{i+1}'|@>p_i>>|K_i'|
@>e_i>>|\hat K_{i+1}|@>\pi_i>>|K_i'|@>p_{i-1}>>\dots
\end{CD}\]
implies that the $\rho_i$ commute with the bonding maps $q_i$ and $p_i$, and hence
induce a continuous map of the extended mapping telescopes 
$\rho_{[0,\infty]}\:|\hat K|_{[0,\infty]}\to Q_{[0,\infty]}$ which in fact restricts
to the same map $\zeta\:Y\to X'$.

Theorem \ref{def-retr} yields a deformation retraction of each $|\hat K_i|$ 
onto $e_i(|K_i'|)$, that is, a homotopy $h_t^i\:|\hat K_i|\to|\hat K_i|$ from 
$h_0^i=\id$ to a retraction $h_1^i$ of $|\hat K_i|$ onto $e_i(|K_i'|)$.
Then the composition 
$\tilde h_t^i\:|\hat K_i|\xr{h_t^i}|\hat K_i|\xr{\pi_i}|K_i'|\xr{e_i}|\hat K_i|$
is a homotopy from $\rho_i$ to $h_1^i$.
By combining $h^i_t$ and $\tilde h^i_{1-t}$ we get a homotopy
$H_t^i\:|\hat K_i|\to|\hat K_i|$ from $\id$ to $\rho_i$.
Then the composition
$\bar H_t^i\:|\hat K_{i+1}|\xr{\hat p_i}|\hat K_i|\xr{H_t^i}|\hat K_i|$
is a homotopy from $\hat p_i$ to the composition
$|\hat K_{i+1}|\xr{\hat p_i}|\hat K_i|\xr{\pi_i}|K_i'|\xr{e_i}|\hat K_i|$,
which in turn equals $q_i$ (using that $\hat p_i$ lifts $p_i$).
Let us define a map $\phi_{[i,\,i+1]}\:MC(q_i)\to MC(\hat p_i)$ by composing 
the homeomorphism $MC(q_i)\to |\hat K_{i+1}|\x I\cup MC(q_i)$ with 
the union of the metric quotient map $|\hat K_{i+1}|\x I\to MC(\hat p_i)$ 
and the map $MC(q_i)\to |\hat K_i|$ to which $\bar H_t^i$ descends.
These combine into a continuous map 
$\phi_{[0,\infty)}\:Q_{[0,\infty)}\to |\hat K|_{[0,\infty)}$.

Since $\rho_i\rho_i=\rho_i$, the compositions 
$|\hat K_i|\xr{h_t^i}|\hat K_i|\xr{\rho_i}|\hat K_i|$ 
and $|\hat K_i|\xr{\tilde h_t^i}|\hat K_i|\xr{\rho_i}|\hat K_i|$ are equal.
Hence the self-homotopy $|\hat K_i|\xr{H_t^i}|\hat K_i|\xr{\rho_i}|\hat K_i|$
of $\rho_i$ is null-$2$-homotopic.
It follows that the composition 
$Q_{[0,\infty)}\xr{\phi_{[0,\infty)}}|\hat K|_{[0,\infty)}
\xr{\rho_{[0,\infty)}}Q_{[0,\infty)}$
is homotopic to the identity map by a homotopy $\Phi_t$.
Also, since $h_t^i$ keeps $e_i(|K_i'|)$ fixed, the compositions 
$|\hat K_i|\xr{\rho_i}|\hat K_i|\xr{h_t^i}|\hat K_i|$ and 
$|\hat K_i|\xr{\rho_i}|\hat K_i|\xr{\tilde h_t^i}|\hat K_i|$ are both equal to $\rho_i$
for each $t$.
Hence the self-homotopy $|\hat K_i|\xr{\rho_i}|\hat K_i|\xr{H_t^i}|\hat K_i|$ of $\rho_i$ 
stays at $\rho_i$ for each $t$.
It follows that the composition 
$|\hat K|_{[0,\infty)}\xr{\rho_{[0,\infty)}}Q_{[0,\infty)}\xr{\phi_{[0,\infty)}}|\hat K|_{[0,\infty)}$
is homotopic to the identity map by a homotopy $\Psi_t$.

Let $x_k$ be a sequence of points in $Q_{[0,\infty)}$ converging to an $x_\infty\in X'$.
Since the $K_i$ and hence also the $\hat K_i$ are locally finite, the proof of 
Lemma \ref{convergence} works to show that $M\bydef \{x_k\mid k\in\N\}$ lies in 
the mapping telescope $|C|_{[0,\infty)}$ of an inverse sequence of the form 
$\dots\xr{q_1|}|C_1|\xr{q_0|}|C_0|$, where $C_i\subset\hat K_i$ are finite subcomplexes 
such that each $q_i(|C_{i+1}|)\subset |C_i|$.
Since each $q_i$ is a lift of $p_i$, we have $p_i(|D_{i+1}|)\subset |D_i|$, where 
$D_i=\pi_i(C_i)$.
Consequently $q_i(|\hat D_{i+1}|)\subset |\hat D_i|$, where $\hat D_i=\widehat{D_i}^n$.
Since $\hat p_i$ is also a lift of $p_i$, we further have
$\hat p_i(|\hat D_{i+1}|)\subset |\hat D_i|$.
Since the deformation retraction $h_t^i$ moves $|\hat D_i|$ only within itself, 
$\phi_{[0,\infty)}$ sends the mapping telescope of
$\dots\xr{q_1|}|\hat D_1|\xr{q_0|}|\hat D_0|$ into the mapping telescope
$|\hat D|_{[0,\infty)}$ of $\dots\xr{\hat p_1|}|\hat D_1|\xr{\hat p_0|}|\hat D_0|$.
Since each $D_i$ is finite, $|\hat D|_{[0,\infty]}$ is compact.
Thus $\phi_{[0,\infty)}$ is an $X'$-$Y$-approaching map.

Similar arguments show that $\Phi_t$ is an $X'$-$X'$-approaching homotopy and
$\Psi_t$ is a $Y$-$Y$-approaching homotopy.
\end{proof}

\subsection{General $\nabla$-resolution}

\begin{lemma} \label{telescope-collapse}
Let $K_0\subset K_1\subset\dots$ be subcomplexes of $K$ such that $\dim K_i\le i$.
Then the mapping telescope $|\hat K|^{[0,\infty)}$ of the inclusions 
$|\widehat{K_0}^0|\subset|\widehat{K_1}^1|\subset\dots$
strongly deformation retracts onto the mapping telescope $|e(K^\flat)|_{[0,\infty)}$ of the inclusions
$|e(K^\flat_0)|\subset |e(K^\flat_1)|\subset\dots$.

Moreover, the deformation retraction moves $|\hat L^i|\x[i,i+1]\cup|\hat L^{i+1}|\x\{i+1\}$ 
within itself for every subcomplex $L$ of $K_i$.
\end{lemma}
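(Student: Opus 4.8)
The plan is to assemble the strong deformation retraction of the whole telescope out of the slicewise deformation retractions supplied by Theorem \ref{def-retr}. Write $B_i=|\widehat{K_i}^i|$ and $B_i'=|e(K_i^\flat)|$, so that $|\hat K|^{[0,\infty)}=\bigcup_i B_i\x[i,i+1]$, with $B_i\x\{i+1\}$ glued into $B_{i+1}\x\{i+1\}$ along the inclusion $B_i\emb B_{i+1}$, and the target $|e(K^\flat)|_{[0,\infty)}=\bigcup_i B_i'\x[i,i+1]$ is the corresponding subtelescope. Each of $B_i'\emb B_i$ and $B_i\emb B_{i+1}$ is the inclusion of a subcomplex, hence a cofibration, and Theorem \ref{def-retr} (applied to $K_i$ with $n=i$, using $\dim K_i\le i$) provides, for every $i$, a strong deformation retraction $r^i\:B_i\x I\to B_i$ of $B_i$ onto $B_i'$ which keeps every $|\hat L^i|$ ($L\subset K_i$) within itself.

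First I would record the one feature that makes the construction go: although the $r^i$ need not commute with the inclusions $B_i\emb B_{i+1}$ (the former is built over $\Delta^i$, the latter's retraction $r^{i+1}$ over $\Delta^{i+1}$), their final retractions are compatible up to a controlled homotopy. Indeed $r^i_1$ and $r^{i+1}_1|_{B_i}$ are two maps $B_i\to B_{i+1}'$ that both restrict to the inclusion on $B_i'$ (here $r^{i+1}$ fixes $B_{i+1}'\supset B_i'$ pointwise); since $B_i'\emb B_i$ is a deformation retract, any such map is homotopic rel $B_i'$ to the fixed composite $B_i\xr{r^i_1}B_i'\emb B_{i+1}'$, so $r^i_1$ and $r^{i+1}_1|_{B_i}$ are homotopic rel $B_i'$ inside $B_{i+1}'$. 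Manufacturing this homotopy out of $r^i$ (and tracking the subcomplexes via the Moreover of Theorem \ref{def-retr}) is what lets consecutive cylinders be matched along their shared level.

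Next I would construct the deformation retraction cylinder by cylinder. On $B_i\x[i,i+1]$ the retraction interpolates, as the base coordinate runs from $i$ to $i+1$, between $r^i$ at the bottom and the restriction of $r^{i+1}$ at the top, using the homotopy of the previous paragraph together with the cofibration $B_i'\emb B_i$; a point that must reach $B_{i+1}'\but B_i'$ is carried up to level $i+1$, where the larger fiber $B_{i+1}$ first becomes available. By design the retraction agrees at $t=i+1$ with the bottom of the $(i+1)$st cylinder, so the pieces glue into a continuous homotopy $G\:|\hat K|^{[0,\infty)}\x I\to|\hat K|^{[0,\infty)}$ fixing the subtelescope $|e(K^\flat)|_{[0,\infty)}$ (because each $r^i$ fixes $B_i'$) and with $G_1$ valued in it. Continuity at the end of the homotopy is automatic because any compact set has bounded base coordinate and so meets only finitely many cylinders. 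Finally the Moreover is read off by restricting the whole construction: since each $r^i$ keeps $|\hat L^i|$ within itself and $|\hat L^i|\subset|\hat L^{i+1}|$, the homotopy keeps $|\hat L^i|\x[i,i+1]\cup|\hat L^{i+1}|\x\{i+1\}$ within itself for every subcomplex $L$ of $K_i$.

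The main obstacle is precisely this incompatibility at the shared levels $t=i+1$: one cannot simply apply $r^i$ fiberwise with the base coordinate held fixed, both because $r^i$ and $r^{i+1}$ disagree on $B_i$ and because a point of $B_i$ can be pushed into the larger fiber $B_{i+1}'$ only by sliding it up to level $i+1$. Arranging the interpolation so that it is simultaneously continuous, valued in the telescope (no point leaves its fiber before reaching level $i+1$), compatible across all shared levels, and respectful of every $|\hat L^\bullet|$ is the delicate step; I expect to carry it out by the homotopy-extension machinery one mapping cylinder at a time, with every auxiliary homotopy manufactured from the $\hat L$-preserving retractions $r^i$ of Theorem \ref{def-retr}.
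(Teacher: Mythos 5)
Write $B_i=|\widehat{K_i}^i|$ and $B_i'=|e(K_i^\flat)|$ as in your proposal. The step you yourself flag as ``the delicate step'' is never carried out, and the one concrete mechanism you offer for it fails. You want the retraction on the cylinder $B_i\x[i,i+1]$ to interpolate between $r^i_1$ and $r^{i+1}_1|_{B_i}$ through a homotopy $h_s$ rel $B_i'$ valued in $B_{i+1}'$, and to handle values in $B_{i+1}'\but B_i'$ by ``carrying the point up to level $i+1$''. That mechanism is incompatible with continuity and strongness simultaneously: since $h_s$ is rel $B_i'$, a point $x_k$ near $B_i'$ can have $h_{s_0}(x_k)$ near $B_i'$ but in $B_{i+1}'\but B_i'$; then $G_1(x_k,i+s_0)=(h_{s_0}(x_k),\,i+1)\to(x,\,i+1)$ as $x_k\to x\in B_i'$, whereas strongness forces $G_1(x,\,i+s_0)=(x,\,i+s_0)\neq(x,\,i+1)$, so $G_1$ is discontinuous unless $h_{s_0}$ maps a whole neighborhood of $B_i'$ into $B_i'$ --- which nothing in your construction guarantees. (The repair exists but uses an observation you never make: by the Moreover clause of Theorem \ref{def-retr} applied to $K_{i+1}$ with $L=K_i$, the retraction $r^{i+1}$ moves $|\widehat{K_i}^{i+1}|$ within itself, and since $|\widehat{K_i}^{i+1}|\cap|e(K_{i+1}^\flat)|=|e(K_i^\flat)|$ this forces $r^{i+1}_1(B_i)\subset B_i'$; then your own interpolating homotopy $r^{i+1}_1\circ r^i_s$ is valued in $B_i'$ and no point ever needs to leave its level. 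Even granting that, you still owe the homotopy from the identity of the telescope to $G_1$, matching homotopies --- not just end maps --- across the shared fibers; this is deferred to ``homotopy-extension machinery'' and never executed.)

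The missing idea is a different decomposition, which makes all the matching problems vanish: do not interpolate between the mismatched $r^i$ and $r^{i+1}$ at all. First, use the strong deformation retraction of $B_n$ onto $B_n'$ to strongly deformation retract each cylinder $B_n\x[n,n+1]$ onto $B_n'\x[n,n+1]\cup B_n\x\{n,n+1\}$; these retractions fix both end fibers pointwise, so they glue over all $n$ with no compatibility condition, retracting the whole telescope onto $|e(K^\flat)|_{[0,\infty)}\cup\bigcup_n B_n\x\{n\}$. Second, retract each remaining fin $B_n\x\{n\}$ onto $B_n'\x\{n\}$ by $r^n$ rel $B_n'$; the fins sit at distinct levels, hence are disjoint, so these retractions glue with the identity of the subtelescope. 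Both stages respect the subcomplexes $|\hat L^\bullet|$ by the Moreover clause of Theorem \ref{def-retr} (note that at the fin $B_{i+1}\x\{i+1\}$ it is $|\hat L^{i+1}|$, not $|\hat L^i|$, that is preserved, which is exactly why the Lemma's Moreover clause has its asymmetric form). As written, your proof has a genuine gap at its central step.
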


\begin{proof} By Proposition \ref{collapse} each $|\widehat{K_n}^n|$ 
strongly deformation retracts onto $|e(K^\flat_n)|$.
Hence $|\widehat{K_n}^n|\x [n,\,n+1]$ strongly deformation retracts onto 
$|e(K^\flat_n)|\x [n,\,n+1]\cup|\widehat{K_n}^n|\x\{n,n+1\}$.
Thus $|\hat K|^{[0,\infty)}$ strongly deformation retracts onto 
$|e(K^\flat)|_{[0,\infty)}\cup\bigcup_n|\widehat{K_n}^n|\x\{n\}$.
The latter in turn strongly deformation retracts onto $|e(K^\flat)|_{[0,\infty)}$.
\end{proof}

\begin{proof}[Proof of Theorem \ref{nabla}(b)]
Let $X$ be given by an inverse sequence $\dots\xr{p_1}|K_1|\xr{p_0}|K_0|$
with locally finite $K_i$.
By the proof of Lemma \ref{surjective}(a) we may assume that the $p_i$ are surjective.
Since $X$ is decomposable, by Theorem \ref{telescope-decomposition}(a) 
$X$ is homotopy equivalent to the mapping telescope $X_{[0,\infty)}$ of 
its $\Delta$-subspaces $X_0\subset X_1\subset\dots$ such that each $\dim X_n\le n$.

Let $L_{in}$ be the smallest subcomplex of $K_i$ such that $|L_{in}|$ contains 
$p^\infty_i(X_n)$.
By the proof of Lemma \ref{delta-subspaces}(b) $X_n=\lim|L_{in}|$ and each
$|L_{in}|=p^\infty_i(X_n)$.
In particular, each $p_i(|L_{i+1,\,n}|)=|L_{in}|$ and hence $\dim L_{in}\le n$.
Let $|L|_{i,\,[0,\infty)}$ be the mapping telescope of the inclusions
$|L_{i0}|\subset |L_{i1}|\subset\dots$.
Then $X_{[0,\infty)}=\lim\big(\dots\to|L|_{1,\,[0,\infty)}\to|L|_{0,\,[0,\infty)}\big)$.

Let $|\hat L|_i^{[0,\infty)}$ be the mapping telescope of the inclusions
$|\widehat{L_{i0}}^0|\subset |\widehat{L_{i1}}^1|\subset\dots$ and let
$Y_{[0,\infty)}=\lim\big(\dots\to|L|_1^{[0,\infty)}|\to|L|_0^{[0,\infty)}\big)$.
The projections $|\hat L|_i^{[0,\infty)}\to|L|_{i,\,[0,\infty)}$
commute with the bonding maps, and so yield a continuous map
$|\hat L|_{[0,\infty]}^{[0,\infty)}\to|L|_{[0,\infty],\,[0,\infty)}$
of the extended mapping telescopes, which restricts to a map 
$\zeta\:Y_{[0,\infty)}\to X_{[0,\infty)}$.

By Lemma \ref{inverse subdivision} $X$ is homeomorphic via the identity map to 
a $\Delta$-space $X'$ of the form $\lim\big(\dots\xr{p_1}|K'_1|\xr{p_0}|K'_0|\big)$,
where each $K_i'$ is an admissible subdivision of $K_i$ isomorphic to 
the barycentric subdivision $K_i^\flat$.
In fact, by the proof of Lemma \ref{inverse subdivision} $K_i'=(K_i^+)^\flat$, 
where $K_i^+$ is an apexed simplicial complex obtained from $K_i$ by endowing its 
simplexes with apexes located at certain points.
Let $L'_{in}$ be the subcomplex of $K_i'$ subdividing $L_{in}$.

Lemma \ref{telescope-collapse} yields a deformation retraction of each 
$|\hat L|_i^{[0,\infty)}$ onto the mapping telescope $|e(L')|_{i,\,[0,\infty)}$ 
of the inclusions $|e(L_{i0}')|\subset |e(L_{i1}')|\subset\dots$.
Using these deformation retractions, it follows similarly to the proof of (a)
that $\zeta$ is a fine shape equivalence.

Let $R$ be the triangulation of $[0,\infty)$ with vertices at the integer points.
Then $Q_i\bydef (K_i^+\x R)^\flat$ is a triangulation of the product $|K_i|\x|R|$ 
such that the projections onto its factors are triangulated by simplicial maps
$Q_i\to K_i'$ and $Q_i\to R^\flat$.
Let us note that each $|L_{i,\,[0,\infty)}|$ is triangulated by a subcomplex $T_i$ 
of $Q_i$, and $X'_{[0,\infty)}\bydef \lim\big(\dots\to|T_1|\to|T_0|\big)$ is 
a $\Delta$-space which is homeomorphic to $X_{[0,\infty)}$ via the identity map.

Now $Q_i\boxtimes\Delta^\infty$ is a simplicial complex such that
$|Q_i\boxtimes\Delta^\infty|=|Q_i|\x|\Delta^\infty|$ as spaces 
\cite{M00}*{Theorem \ref{book:cell-product}}, and
$|\hat L|_i^{[0,\infty)}$ is triangulated by its subcomplex $\hat T_i$.
Then $Y'_{[0,\infty)}\bydef \lim\big(\dots\to|\hat T_1|\to|\hat T_0|\big)$ is 
a $\Delta$-space which is homeomorphic to $Y_{[0,\infty)}$ via the identity map.
Since the bonding maps $\hat p_i$ are non-degenerate, $Y'_{[0,\infty)}$ is 
a $\nabla$-space.
Clearly it is locally finite-dimensional and $\Delta$-locally finite.
\end{proof}

\begin{remark} The space $Y_{[0,\infty)}$ produced by the proof of 
Theorem \ref{nabla}(b) is in fact the mapping telescope of the inclusions
$Y_0\subset Y_1\subset\dots$ between the $\Delta$-subspaces 
$Y_n\bydef \lim\big(\dots\to|\widehat{L_{1n}}^n|\to|\widehat{L_{0n}}^n|\big)$
of the $\nabla$-space 
$Y\bydef \lim\big(\dots\to|\widehat{K_1}^\infty|\to|\widehat{K_0}^\infty|\big)$.
Since the $\Delta$-subspaces $X_i$ of $X$ are actually normal, it is easy to see
that the $Y_i$ are also normal $\Delta$-subspaces of $Y$.
Also it is easy to see that $Y=\bigcup_i Y_i$.
Hence $Y$ is homotopy equivalent to $Y_{[0,\infty)}$ by 
Theorem \ref{telescope-decomposition}(a).

Thus $Y$ is another $\nabla$-space which is fine shape equivalent to $X$.
Although the construction of $Y$ is more straightforward than that of 
$Y'_{[0,\infty)}$, it should be noted that $Y$ is not locally finite-dimensional,
nor even $\Delta$-locally finite, in contrast to $Y'_{[0,\infty)}$.
\end{remark}

\subsection{The case of unbounded dimension}

This subsection is not used the rest of the paper; it is devoted to the proof of the following

\begin{theorem}\label{def-retr2}
If $K$ is finite-dimensional, then $|\hat K^\infty|$ strongly deformation retracts onto $|e(K^\flat)|$.

Moreover, for each $n$ and every subcomplex $L$ of $K$ such that $\dim L\le n$, the deformation retraction moves 
$|\hat L^n|$ only within itself.
\end{theorem}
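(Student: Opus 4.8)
The plan is to exploit the filtration $|e(K^\flat)|\subseteq|\hat K^N|\subseteq|\hat K^{N+1}|\subseteq\dots$, where $N=\dim K$ and $|\hat K^\infty|=\bigcup_{n\ge N}|\hat K^n|$ (every simplex of $\hat K^\infty$ has finitely many vertices, hence bounded $\Delta^\infty$-coordinate, and so projects non-degenerately into $\Delta^n$ for some $n$). The base of the filtration is already handled: Theorem \ref{def-retr} gives a strong deformation retraction of $|\hat K^N|$ onto $|e(K^\flat)|$ moving each $|\hat L^n|$ within itself. It remains to retract each successive layer and to assemble the countably many retractions into a single one.

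First I would produce, for every $n\ge N$, a collapse of $\hat K^{n+1}$ onto $\hat K^n$. The acyclic matching of Lemma \ref{collapse2} on $Q(\sigma,n+1)$ — pairing an excessive cell $C$ with the deficient cell $C^-$ via the index $\lambda_C$ — is purely combinatorial and does not see the ambient bound. It restricts to the matching on $Q(\sigma,n)$ (the partner of a cell with subsets of $[n]$ again has subsets of $[n]$, since $\lambda_C\le m\le N\le n$), and every cell of $Q(\sigma,n+1)\but Q(\sigma,n)$ is matched within the difference (such a cell has $n+1$ in its top set $A_m$, which the pairing never removes). Thus the cells of the difference are matched with no critical cell, yielding a collapse $Q(\sigma,n+1)\searrow Q(\sigma,n)$; assembling these over the skeleta of $K^\flat$ exactly as in Proposition \ref{collapse} gives a collapse $\hat K^{n+1}\searrow\hat K^n$. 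The same combinatorial compatibility shows it restricts to $\hat L^{n+1}\searrow\hat L^n$ for every subcomplex $L$, so the associated standard deformation retraction $\rho^n$ (Definition \ref{collapse-dr}) moves $|\hat L^{n+1}|$ within itself.

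Next I would concatenate. Run $\rho^n$ during the interval $[a_{n+1},a_n]$, where $a_N=\tfrac12$ and $a_n\downarrow 0$, and run the base retraction of Theorem \ref{def-retr} during $[\tfrac12,1]$; thus as $t$ grows from $0$ the highest layers are collapsed first, each point of $|\hat K^{n_0}|$ is fixed until $t=a_{n_0}$ and thereafter pushed through $\hat K^{n_0-1},\dots,\hat K^N$ and finally into $e(K^\flat)$. Since $\rho^n$ fixes $|\hat K^n|\supseteq|e(K^\flat)|$ and the base retraction fixes $|e(K^\flat)|$, the result is a strong deformation retraction provided it is continuous. Each point meets only finitely many of the $\rho^n$, so continuity is automatic away from $t=0$ and at the matching time $\tfrac12$; the only issue is at $(x^*,0)$, the accumulation point of the layers.

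The main obstacle, and the only place where the metric (as opposed to the combinatorics) enters, is this continuity at $t=0$. Here I would record the metric estimate that $\rho^n$ displaces a point by at most a fixed multiple of the total barycentric weight it places on vertices of $\Delta^\infty$-coordinate $>n$: indeed $\rho^n$ is supported on the stars of the cells it collapses, all of which involve a vertex at level $n+1$, so a point carrying weight $\varepsilon$ in those levels moves by $O(\varepsilon)$ in the metric. Composing the tail $\rho^n,\rho^{n-1},\dots$ then displaces a point by at most a fixed multiple of the weight $\omega_{>m}(x)$ it carries in levels $>m$, since these retractions only redistribute that excess weight downward. Finally, for a metric-convergent sequence $x^{(k)}\to x^*$ the tail weights $\sup_k\omega_{>m}(x^{(k)})$ tend to $0$ as $m\to\infty$; hence when $(x^{(k)},t_k)\to(x^*,0)$ the high-layer retractions applied by time $t_k$ move $x^{(k)}$ uniformly little, so $H(x^{(k)},t_k)\to x^*$. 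This yields continuity and completes the strong deformation retraction; the moreover clause follows at once, since each $\rho^n$ and the base retraction move $|\hat L^n|$ within itself.
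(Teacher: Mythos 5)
Your first step is sound and is in fact already in the paper: the collapse $\hat K^{n+1}\searrow\hat K^n$, together with its restriction to $\hat L^{n+1}$, is exactly Proposition \ref{collapse'} with $M=K$. The gap is in the assembly. Your concatenation is precisely the infinite composition of Definition \ref{infinite collapse-def} applied to the filtration $|e(K^\flat)|\subset|\hat K^N|\subset|\hat K^{N+1}|\subset\dots$, and this is the route the paper explicitly identifies as the obstruction and then abandons: Example \ref{infinite collapse} exhibits an infinite composition of deformation retractions induced by elementary collapses which is discontinuous, and the paper states that deducing Theorem \ref{def-retr2} from Proposition \ref{collapse'} via the infinite composition is exactly what it does not know how to do directly. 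Against that example two of your claims fail. First, ``each point meets only finitely many of the $\rho^n$, so continuity is automatic away from $t=0$'' confuses pointwise well-definedness with continuity: points arbitrarily near a given $x$ carry weight at arbitrarily high levels and are moved by unboundedly many $\rho^n$. In Example \ref{infinite collapse} the discontinuity occurs at \emph{every} $t>0$ (there $x_i\to v$ while $r^\infty_0(x_i)=v_0\not\to v$), not at $t=0$; so nothing away from $t=0$ is automatic.

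Second, the metric estimates carrying the whole argument are asserted rather than proved, and as stated the first one is false: the standard retraction of Definition \ref{collapse-dr} transfers weight $\mu$ off \emph{all} $p$ vertices of a free face at once, so a point near the barycenter of a high-dimensional free face of $\rho^n$ (which has $p\sim n$ vertices) is displaced by $\Theta(1)$ while its weight at level $n+1$ is only $\Theta(1/n)$ --- there is no multiple uniform in $n$. More importantly, the justification of the composite bound, ``these retractions only redistribute that excess weight downward,'' is precisely the property that fails in Example \ref{infinite collapse}: there too each retraction moves weight downward (from $v_i$ to $v_{i-1}$) with displacement proportional to the local excess weight, yet the weight cascades down one level at a time with amplification and the composition is discontinuous. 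What would actually rescue your argument is a structural claim special to Lemma \ref{collapse2}'s collapses --- that every elementary collapse transfers weight to a vertex at level $\lambda_C\le\dim K$, so weight at levels $>\dim K$ never increases and no cascade can form --- together with displacement bounds summable over the tail; none of this is identified or proved in your sketch, and the bookkeeping must be carried through the entire composition of elementary collapses constituting each $\rho^n$, not a single one. The paper avoids all metric estimates by a different idea: it rearranges the same elementary collapses into \emph{finitely many} infinite compositions $H_0,\dots,H_m$, grouped by the index $l=\lambda_C$, and proves each $H_l$ continuous by a locality argument --- a subdivision $W_l'$ obtained by starring at the barycenters of the free faces, all of whose simplexes are $H_l$-invariant, so that every point has arbitrarily small invariant neighborhoods and Lemma \ref{continuity} applies. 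As written, your continuity step is a genuine gap.
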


This may well be true when $K$ is infinite-dimensional, but if so, that must be more difficult to prove 
(see Example \ref{infinite collapse}).

\begin{proposition}\label{collapse'}
If $\dim K\le n$ and $M$ is subcomplex of $K$ with $\dim M\le n-1$, then $\hat K^n$ 
simplicially collapses onto $\hat M^{n-1}\cup e(K^\flat)$. 

Moreover, for every subcomplex $L$ of $K$ the collapse moves $\hat L^n$ within itself.
\end{proposition}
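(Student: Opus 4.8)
The plan is to follow the proof of Proposition~\ref{collapse} almost verbatim, stopping the fibrewise collapse one level short over the simplexes that lie in $M^\flat$. Recall that $p_n\colon|\hat K^n|\to|K^\flat|$ restricts the projection $K^\flat\boxtimes\Delta^n\to K^\flat$, and that for an $m$-simplex $\sigma=(v_0<\dots<v_m)$ of $K^\flat$ the fibre $p_n^{-1}(\hat\sigma)$ over its barycentre is cellulated by the complex $Q(\sigma,n)$ of Example~\ref{grayson}. I would filter $\hat K^n$ by
\[
\hat K^n_m\bydef p_n^{-1}\big((K^\flat)^{(m)}\big)\cup\hat M^{n-1}\cup e(K^\flat),
\]
so that $\hat K^n_n=\hat K^n$ (because $\dim K^\flat\le n$) and $\hat K^n_{-1}=\hat M^{n-1}\cup e(K^\flat)$, and collapse each $\hat K^n_m$ onto $\hat K^n_{m-1}$. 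Exactly as in Proposition~\ref{collapse}, this reduces, for each $m$-simplex $\sigma$, to collapsing the part of $\hat K^n$ lying over $\sigma$ onto the part lying over $\partial\sigma$ together with $(\hat M^{n-1}\cup e(K^\flat))\cap p_n^{-1}(\sigma)$, and hence to a statement about the single fibre $Q(\sigma,n)$.

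Two cases occur. If $v_m\notin M$, then $\sigma\not\subset M^\flat$, so $\hat M^{n-1}$ does not meet $p_n^{-1}(\Int\sigma)$, and I would collapse $Q(\sigma,n)$ onto the vertex $e(\hat\sigma)$ just as in Proposition~\ref{collapse}. If $v_m\in M$, then $\sigma\subset M^\flat$ and $\dim v_m\le n-1$; here $\hat M^{n-1}\cap p_n^{-1}(\hat\sigma)$ is precisely the subcomplex $Q(\sigma,n-1)\subset Q(\sigma,n)$ of those cells $(A_0,\dots,A_m)$ that avoid the vertex $n$ of $\Delta^n$, and $e(\hat\sigma)\in Q(\sigma,n-1)$ since $\dim v_i\le n-1$ for all $i$. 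Thus everything rests on the missing combinatorial ingredient: \emph{if $m<n$, then $Q(\sigma,n)$ collapses onto $Q(\sigma,n-1)$ by a matching that depends only on the block data $(A_0,\dots,A_m)$.}

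I would prove this claim by induction on $m$, building an acyclic matching with no critical cells on the relatively open set of cells $(A_0,\dots,A_m)$ with $n\in A_m$ (so $n=\max A_m$). Crucially the vertex $n$ is never toggled — deleting it would leave $Q(\sigma,n-1)$, which must stay fixed. The pairing toggles the next vertex $n-1$ inside the top block $A_m$: a cell with $n-1\in A_m$ is matched downward to the cell with $A_m$ replaced by $A_m\but\{n-1\}$, and a cell with $n-1\notin A_m$ is matched upward to $A_m\cup\{n-1\}$ whenever the latter is legal, which fails exactly when $A_m=\{n\}$ and $n-1\in A_{m-1}$. One checks this is an involutive matching of all cells except these leftover ones; deleting the frozen block $A_m=\{n\}$ identifies the leftover cells with those cells of $Q(\tau,n-1)$ using the vertex $n-1$, where $\tau=(v_0<\dots<v_{m-1})$ is the facet opposite $v_m$, and the inductive hypothesis applied to $(\tau,n-1)$ matches them among themselves. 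The base case $m=0$ is the standard collapse of the simplex $Q(v_0,n)=\Delta^n$ onto the facet $Q(v_0,n-1)=\Delta^{n-1}$ opposite the vertex $n$. Acyclicity follows, as in Lemma~\ref{collapse2}, from the priority order $n-1,n-2,\dots$ in which the vertices are toggled; the cases $n=2,3$, $m=1$ of Example~\ref{grayson} display the matching explicitly.

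Finally the two kinds of fibrewise collapse glue and are natural. Each collapse of $\hat K^n_m$ onto $\hat K^n_{m-1}$ is performed rel $p_n^{-1}(\partial\sigma)$, so a simplex $\sigma\not\subset M^\flat$ having a proper face in $M^\flat$ causes no conflict: the part of $\hat K^n$ over that face is untouched until the lower stage at which the face is treated. Since all the matchings above are defined intrinsically from the block data, they commute with the face operators and with the inclusion of any subcomplex $L\subset K$; hence the collapse moves $\hat L^n$ within itself, and a Theorem~\ref{def-retr} style deformation retraction is obtained for free. I expect the main obstacle to be the claim itself: the naive ``dual'' of Lemma~\ref{collapse2} would pivot on the vertex $n$ and thereby escape into the target $Q(\sigma,n-1)$, so the whole point is to freeze $n$ and pass to the facet, which is exactly what forces the induction on $m$ rather than a single flat matching.
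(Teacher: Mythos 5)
Your proposal is correct, and its skeleton is the paper's: the filtration $\hat K^n_m=p_n^{-1}\big((K^\flat)^{(m)}\big)\cup\hat M^{n-1}\cup e(K^\flat)$, the fibrewise reduction over each $m$-simplex $\sigma$ of $K^\flat$ (deferring to Proposition \ref{collapse} over simplexes $\sigma\not\subset M^\flat$, and reducing to the claim that $Q(\sigma,n)$ collapses onto $Q(\sigma,n-1)$ when $\sigma\subset M^\flat$ and $m=\dim\sigma<n$), and naturality of the matchings for the ``moreover'' clause. Where you genuinely diverge is in the proof of that key claim. The paper builds no new matching at all: it observes that the matching of Lemma \ref{collapse2} already restricts to a perfect matching of $Q(\sigma,n)\but Q(\sigma,n-1)$, because every cell there is non-terminal (the terminal cell lies in $Q(\sigma,m)\subset Q(\sigma,n-1)$), so the toggled index satisfies $\lambda_C\le m<n$; hence the pairing $C\leftrightarrow C^{\pm}$ never adds or removes the vertex $n$ and preserves the condition $n\in A_m$, and the collapse ordering of Lemma \ref{collapse2} goes through verbatim relative to $Q(\sigma,n-1)$. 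Your closing diagnosis --- that any flat matching would have to pivot on the vertex $n$ and escape into $Q(\sigma,n-1)$, so that an induction on $m$ is forced --- is therefore mistaken: the existing matching pivots on $\lambda_C$, which is automatically $<n$, so it already ``freezes'' $n$ for free. That said, your alternative matching does work: the unmatched cells of your toggle-$(n-1)$ pairing are exactly those with $A_m=\{n\}$ and $n-1\in A_{m-1}$, these are identified (by deleting the frozen top block) with the cells of $Q(\tau,n-1)\but Q(\tau,n-2)$, and the induction on $m$ bottoms out in the collapse of $\Delta^n$ onto the facet $\Delta^{n-1}$. Acyclicity, which your write-up only gestures at via ``priority order,'' can indeed be completed: any alternating path that goes up through a toggle-$(n-1)$ pair dies immediately, since every facet of the upper cell other than its partner still contains both $n-1$ and $n$ in the top block (or else falls into $Q(\sigma,n-1)$), hence is itself matched downward; so a directed cycle would have to live entirely among the leftover cells, where the inductive hypothesis forbids it. In sum, you prove the same key lemma by a heavier, self-contained inductive construction, whereas the paper gets it as a one-line corollary of the matching it already has; your route is independent of the particular combinatorics of Lemma \ref{collapse2} and makes the role of the frozen vertex $n$ explicit, but at the cost of an extra induction and an acyclicity verification that still needs to be written out.
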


\begin{proof}
Following the same approach as in the proof of Proposition \ref{collapse}, it suffices to show 
that each $Q(n)=Q(\sigma,n)$ collapses onto $Q(n-1)$, where $n>m=\dim\sigma$.
Following the proof of Lemma \ref{collapse2}, the latter in turn reduces to showing that each 
$Q^k_l(n)\cup Q(n-1)$ collapses onto $Q^k_{l+1}(n)\cup Q(n-1)$.
This is proved just like in the proof of Lemma \ref{collapse2}, using that
a cell $C\in Q(n)\but Q(n-1)$ is never a face of a cell in $Q(n-1)$,
and, if excessive, also satisfies $C^-\notin Q(n-1)$ due to $\lambda_C\le m<n$.
Indeed, the inequality $\lambda_C\le m$ fails only when $C$ is the terminal $0$-cell,
but the terminal $0$-cell belongs to $Q(m)\subset Q(n-1)$ due to $m<n$.
\end{proof}

\begin{definition} \label{infinite collapse-def}
Suppose that a space $X_\infty$ is the union of a chain $X_0\subset X_1\subset\dots$
of its subspaces, where each $X_{i+1}$ admits a strong deformation retraction onto $X_i$.
We will construct the {\it infinite composition} of these deformation retractions, which 
will be a strong deformation retraction of $X_\infty$ onto $X_0$ as long as it is known 
to be continuous.

Thus for each $i\ge 0$ we are given a homotopy $h_t^i\:X_{i+1}\to X_{i+1}$, $t\in [0,1]$, 
starting from $h_0^i=\id$ and terminating with $h_1^i\:X_{i+1}\xr{r_i}X_i\subset X_{i+1}$, 
where $r_i$ is a retraction.

Let $r^k_i$ denote the composition $X_k\xr{r_{k-1}}\dots\xr{r_i}X_i$ whenever $k\ge i\ge 0$.
Let us note that $r^k_i|_{X_j}=r^j_i$ whenever $k\ge j\ge i\ge 0$.
Hence $r^\infty_i\:X_\infty\to X_i$, defined by $r^\infty_i(x)=r^k_i(x)$ whenever 
$x\in X_k$, is well-defined (as a set-theoretic map).
Clearly, the composition $X_\infty\xr{r^\infty_{i+1}}X_{i+1}\xr{r^{i+1}_i}X_i$ equals 
$r^\infty_i$.

Let $H_t\:X_\infty\to X_\infty$, $t\in [0,1]$, be defined as follows.
We set $H_0=\id$ and $H_{2^{-i}}=r^\infty_i$ for each $i\ge 0$.
Let $\phi_i\:[0,1]\to[2^{-i-1},2^{-i}]$ be the increasing linear homeomorphism,
and let us define $H_{\phi_i(t)}$ (as a set-theoretic map) to be the composition 
$X_\infty\xr{r^\infty_{i+1}}X_{i+1}\xr{h_t^i}X_{i+1}$.
Let $H\:X_\infty\x I\to X_\infty$ be defined by $H(x,t)=H_t(x)$.
If we are somehow able to show that $H$ is continuous, then clearly $H$ will be 
the desired strong deformation retraction of $X_\infty$ onto $X_0$.
\end{definition}

\begin{lemma} \label{continuity}
Suppose that for each $i=0,1,\dots$ the restriction 
$H|_{X_\infty\x[0,2^{-i}]}$ is continuous at each point of $X_i\x[0,2^{-i}]$.
Then $H$ is continuous.
\end{lemma}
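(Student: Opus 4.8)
The plan is to verify continuity of $H$ at an arbitrary point $(x_0,t_0)\in X_\infty\x I$, splitting into the two cases $t_0=0$ and $t_0>0$. In both cases the strategy is to exhibit an \emph{open} neighborhood of $(x_0,t_0)$ on which $H$ is known (or easily shown) to be continuous; since continuity is local, this suffices. The points $t_0=0$ are exactly the ones the hypothesis is tailored for, while the points $t_0>0$ will be reduced to the continuity of the finitely many retractions $r^\infty_1,\dots,r^\infty_{i_0}$ that enter the formula for $H$ near such a $t_0$. So the first order of business is to extract the continuity of each $r^\infty_i$ from the hypothesis.

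First I would prove that each $r^\infty_i\:X_\infty\to X_i$ is continuous. Fix $x_0$ and choose $n$ with $x_0\in X_n$. For any index $j\ge n$ we have $x_0\in X_j$, hence $(x_0,2^{-j})\in X_j\x[0,2^{-j}]$; applying the hypothesis at index $j$ and restricting the continuous map $H|_{X_\infty\x[0,2^{-j}]}$ to the slice $t=2^{-j}$, on which it equals $r^\infty_j$ by Definition \ref{infinite collapse-def}, shows that $r^\infty_j$ is continuous at $x_0$. For $j<n$ I would instead use the factorization $r^\infty_j=r^n_j\circ r^\infty_n$, valid because $r^k_j=r^n_j\circ r^k_n$ for all $k\ge n$ and $X_\infty=\bigcup_{k\ge n}X_k$; here $r^\infty_n$ is continuous at $x_0$ by the case just treated, and $r^n_j=r_j\circ\dots\circ r_{n-1}$ is a finite composition of the continuous retractions $r_\ell=h^\ell_1$, so $r^\infty_j$ is continuous at $x_0$ as well. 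Since $x_0$ was arbitrary, every $r^\infty_i$ is continuous.

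For $t_0>0$ I would choose $i_0$ with $2^{-i_0}<t_0$, so that $V\bydef X_\infty\x(2^{-i_0},1]$ is an open neighborhood of $(x_0,t_0)$. On the closed set $W\bydef X_\infty\x[2^{-i_0},1]\supset V$ the map $H$ is a finite concatenation: over each piece $X_\infty\x[2^{-j-1},2^{-j}]$, $j=0,\dots,i_0-1$, it is given by $(x,t)\mapsto h^j_{\phi_j^{-1}(t)}\big(r^\infty_{j+1}(x)\big)$, which is continuous because $r^\infty_{j+1}$ is continuous and $h^j$ is a homotopy; the pieces agree on the overlapping slices $t=2^{-j}$, where both formulas reduce to $r^\infty_j$, so by the pasting lemma $H|_W$ is continuous and hence $H$ is continuous on the open set $V\ni(x_0,t_0)$. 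For $t_0=0$, pick $n$ with $x_0\in X_n$; the hypothesis at index $n$ gives continuity of $H|_{X_\infty\x[0,2^{-n}]}$ at $(x_0,0)\in X_n\x[0,2^{-n}]$, and since $X_\infty\x[0,2^{-n})$ is open in $X_\infty\x I$ and contained in $X_\infty\x[0,2^{-n}]$, this immediately yields continuity of $H$ at $(x_0,0)$. The main obstacle is the second paragraph: the hypothesis only asserts continuity where the space-coordinate already lies in $X_i$, so the crux is the index-raising trick, combined with the factorization $r^\infty_j=r^n_j\circ r^\infty_n$, that converts this restricted information into genuine continuity of all the $r^\infty_i$; once these are in hand, the case $t_0>0$ is a routine finite pasting and the case $t_0=0$ is immediate.
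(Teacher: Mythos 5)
Your proposal is correct and follows essentially the same route as the paper's own proof: first establishing continuity of each $r^\infty_i$ by the same two-case index argument (use the hypothesis at a large enough index where $x_0$ already lies in the subspace, then pull back via the finite composition $r^n_j$), then handling $t_0>0$ by finite pasting of the pieces $X_\infty\x[2^{-j-1},2^{-j}]$, and $t_0=0$ directly from the hypothesis. Your version is only slightly more explicit about the openness/restriction bookkeeping, but the decomposition and the key ideas are identical.
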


Let us note that $H_t$ for $t\in [0,2^{-i}]$ keeps $X_i$ fixed.

\begin{proof} Let us show that each $r^\infty_i$ is continuous.
Given an $x\in X_\infty$, it must belong to some $X_k$.
By the hypothesis $r^\infty_i=H_{2^{-i}}$ is continuous at each point of $X_i$.
If $k\le i$, we get that $r^\infty_i$ is continuous at $x$, as desired.
If $k\ge i$, we get that $r^\infty_k$ is continuous at $x$.
Also $r^k_i$ is continuous, so their composition $r^\infty_i$ must be continuous at $x$.

Thus each $r^\infty_i$ is continuous.
Since $h_t^i$ is also continuous, we get that each $H|_{X_\infty\x[2^{-i-1},2^{-i}]}$ 
is continuous.
Since $r^{i+1}_ir^\infty_{i+1}=r^\infty_i$, we obtain that $H|_{X_\infty\x(0,1]}$ 
is continuous.

Given an $x\in X_\infty$, let us show that $H$ is continuous at $(x,0)$.
We have $x\in X_i$ for some $i$.
Then by the hypothesis $H|_{X_\infty\x[0,2^{-i}]}$ is continuous at $(x,0)$.
Hence so is $H$.
\end{proof}

Theorem \ref{def-retr2} can be deduced from its version where all polyhedra are retopologized 
with weak topology (see \cite{M00}*{Theorem \ref{book:weak} and Lemma \ref{book:cof2}} or 
alternatively \cite{M00}*{proof of Theorem \ref{book:cellular}}) and this version can in turn be 
deduced from Proposition \ref{collapse'} (with $M=K$ and $n>\dim K$) by using a version 
of Lemma \ref{continuity}.

Nevertheless, below we give a direct proof of Theorem \ref{def-retr2} which does not appeal 
to weak topology.
It may still hold some value, particularly in connection with the problem of extending 
Theorem \ref{def-retr2} to the case of an infinite-dimensional $K$.

\begin{example} \label{infinite collapse}
Let $K_\infty$ be the triangulation of $[0,\infty)$ with vertices at the integer points.
Let $K_n$ be its subcomplex triangulating $[0,n]$.
Then each $K_{n+1}$ simplicially collapses onto $K_n$, leading to a strong deformation 
retraction of $[0,n+1]$ onto $[0,n]$.
Their infinite composition is easily seen to be continuous in this case, and thus is
a strong deformation retraction of $|K_\infty|=[0,\infty)$ onto $|K_0|=\{0\}$.

Now let $v*K_\infty$ be the cone over $K_\infty$, triangulating the metric cone over 
$[0,\infty)$ (see \cite{M00}*{\S\ref{book:join}}).
Let $v*K_n$ be its subcomplex triangulating the metric cone over $[0,n]$.
Then each $v*K_{n+1}\cup K_\infty$ simplicially collapses onto $v*K_n\cup K_\infty$, 
leading to a strong deformation retraction of $v*[0,n+1]\cup[0,\infty)$ onto 
$v*[0,n]\cup [0,\infty)$.

It turns out that their infinite composition $H_t$ is not continuous.
Indeed, let $r^j_i$ be the retraction of $|CK_j\cup K_\infty|$ onto $|CK_i\cup K_\infty|$
resulting from the finite composition of collapses, and let $r^\infty_i$
be defineed as above.
Let $v_n$ be the vertex of $K_\infty$ at the point $n\in[0,\infty)$, and
let $x_i=(1-2^{-i})v+2^{-i}v_i\in v*v_i$.
Then it is easy to see that $r^{i+1}_i(x_{i+1})=x_i$.
Hence $r^i_0(x_i)=x_0=v_0$. 
Consequently also $r^\infty_0(x_i)=v_0$.
However $x_i\to v$ as $i\to\infty$.
Thus $H_1=r^\infty_0$ is discontinuous.
\end{example}

In view of Example \ref{infinite collapse} it seems to be challenging to deduce Theorem \ref{def-retr2} 
directly from Proposition \ref{collapse'} by using the infinite composition.
We will take a different route, by rearranging the collapses in this complicated infinite composition 
so as to obtain a finite number of infinite compositions that are simple enough to be continuous.

\begin{proof}[Proof of Theorem \ref{def-retr2}]
We will use the notation in the proof of Propositions \ref{collapse} and \ref{collapse'} 
and Lemma \ref{collapse2}.
In particular, we fix a simplex $\sigma$ of $K^\flat$ and set $m=\dim\sigma$.
Let us consider the infinite cell complex $S=S(\sigma)=\bigcup_{n\ge m}Q(\sigma,n)$.
Let $S_l$ denote the set of all cells $C$ of $S$ such that $\lambda_C\ge l$.
It is easily seen to be a subcomplex of $S$.
We have $S_0=S$ and $S_{m+1}=Q(m,m)$, which consists only of the terminal cell.

Let $S_l(n)$ consist of $S_{l+1}$ and of all cells $C$ of $S_l$ such that $C\in Q(n)$.
It is easily seen to be a subcomplex of $S_l$.
Also let $S_l(\infty)=\bigcup_{n\ge m} S_l(n)$.
We have $S_l(\infty)=S_l$ and $S_l(m)=S_{l+1}$.

Let $S_l^k(n)$ consist of $S_l(n-1)$ and of all cells $C$ of $S_l(n)$ 
such that either $C\in Q^{(k-1)}$ or $C$ is an excessive or terminal $k$-cell of $Q$.
It is easily seen to be a subcomplex of $S_l(n)$.
Beware that $S_l^k(n)$ is quite different from $Q_l^k(n)$, although the difference
$S_l^k(n)\but S^{k-1}_l(n)$ is the same as $Q_l^k(n)\but Q^k_{l+1}(n)$.

By the proof of Proposition \ref{collapse'} each $Q^k_l(n)$ collapses onto 
$Q^k_{l+1}(n)$.
But in fact, the same construction also describes a collapse of $S^k_l(n)$ onto 
$S^{k-1}_l(n)$.
This follows just like in the proof Proposition \ref{collapse'}, using that 
a cell $C\in S_l\but S_{l+1}$ is never a face of a cell in $S_{l+1}$, 
and, if excessive, also satisfies $C^-\notin S_{l+1}$.

By combining the collapses of $S^k_l(n)$ onto $S^{k-1}_l(n)$ for $k=n,\dots,m+1$ 
we obtain a collapse of $S^n_l(n)=S_l(n)$ onto $S^m_l(n)=S_l(n-1)$.
Thus $S_l(n)$ collapses onto $S_l(n-1)$ for each finite $n>m$.

Let $W=W(\sigma)$ be the subcomplex of $\hat K^\infty$ triangulating 
$p_\infty^{-1}(\sigma)$.
Let $\partial W$ be the subcomplex of $W$ triangulating 
$e(\sigma)\cup p_\infty^{-1}(\partial\sigma)$.
Let $W_l$ be the subcomplex of $W$ consisting of $\partial W$ and all simplexes of 
$W\but\partial W$ that meet $|S|=|S(\sigma)|$ in cells of $S_l$.
Thus $W_0=W$ and $W_{m+1}=\partial W$.

Let $W_l(n)$ be the subcomplex of $W_l$ consisting of $\partial W$ and 
all simplexes of $W\but\partial W$ that meet $|S_l|$ in cells of $S_l(n)$.
Thus $W_l(\infty)=W_l$ and $W_l(m)=W_{l+1}$.
Also, $W_l(n)$ collapses onto $W_l(n-1)$ for each finite $n>m$.
Let $H_l\:|W_l|\x I\to|W_l|$ be the infinite composition of the corresponding 
strong deformation retractions.
Let us show that $H_l$ is continuous.

Let $\Phi_l$ be the set of the free faces of all the collapses that are used
in the construction of $H_l$.
Thus $\Phi_l$ consists of all simplexes of $W\but\partial W$ that correspond
to deficient cells $C$ of $Q$ with $\lambda_C=l$.
It is easy to see that $\Phi_l\cup\partial W$ is a subcomplex of $W$.
Let $W_l'$ be the simplicial subdivision of $W_l$ obtained by ``starring at the
barycenters of all simplexes of $\Phi$, in an order of decreasing dimension''.
More precisely, we define $(W_l')^{(k)}$ to be the simplicial subdivision of 
the $k$-skeleton $W_l^{(k)}$ obtained by starring at the barycenters of all 
simplexes of $\Phi^{(k)}$, in an order of decreasing dimension.
This does make sense, and it is easy to see that each $(W_l')^{(k)}$ is
isomorphic to the $k$-skeleton of $(W_l')^{(\kappa)}$ whenever $k<\kappa$.
Then $W_l'$ is defined as the union $\bigcup_{k=0}^\infty(W_l')^{(k)}$.

Now each simplex of $W_l'$ is $H_l$-invariant, i.e.\ goes into itself under
each time instant of the homotopy $H_l$.
Given any point $x\in |W_l'|$, the open star $\ost(\sigma_x,W_l')$ of
the smallest simplex $\sigma_x$ of $W_l'$ containing $x$ is also $H_l$-invariant.
Moreover, if $x\in |W_l'(n)|$, then $x$ is fixed under 
$H_l(n)\bydef H_l|_{|W_l|\x[0,2^{-n}]}$, and it follows that the image of 
$\ost(\sigma_x,W_l')$ under the homothety $h_x^r$ with any ratio $r\in (0,1]$ 
centered at $x$ is $H_l(n)$-invariant.
But such images $h_x^r\big(\ost(\sigma_x,W_l')\big)$, $r=1,\frac12,\frac13,\dots$,
form a base of neighborhoods of $x$ in the metric topology on $|W_l'|$
(see \cite{M00}*{Theorem \ref{book:open star}}).
Moreover, since the barycentric subdivision $W_l^\flat$ is admissible
(see \cite{M00}*{Lemma \ref{book:barycentric-admissible}}) and the vertices
of $W_l'$ form a subset of the vertices of $W_l^\flat$, they have no
cluster points in $|W_l|$, and consequently by the Mine--Sakai theorem 
(see \cite{M00}*{Theorem \ref{book:subdivision}}) $W_l'$ is also admissible, i.e.\
$|W_l'|=|W_l|$ as spaces.
Thus every $x\in |W_l(n)|$ has arbitrarily small $H_l(n)$-invariant 
neighborhoods in the metric topology of $|W_l|$.
Therefore by Lemma \ref{continuity} $H_l$ is continuous.

Thus we have proved that $H_l$ is a deformation retraction of $|W_l|$ onto
$|W_{l+1}|$.
By composing these deformation retractions for $l=0,\dots,m$ we get a deformation
retraction of $|W_0|=|W|$ onto $|W_{m+1}|=|\partial W|$.
Now similarly to the proof of Theorem \ref{def-retr} from these we get 
the desired deformation retraction of $|\hat K^\infty|$ onto $|e(K^\flat)|$.
\end{proof}

\subsection*{Acknowledgements}

I'm grateful to A. Gorelov for stimulating discussions and useful remarks.

\subsection*{Disclaimer}

I oppose all wars, including those wars that are initiated by governments at the time when 
they directly or indirectly support my research. The latter type of wars include all wars 
waged by the Russian state in the last 25 years (in Chechnya, Georgia, Syria and Ukraine) 
as well as the USA-led invasions of Afghanistan and Iraq.

\end{document}